\theoremstyle{plain}
\newtheorem{theorem}{Theorem}
\newtheorem{proposition}{Proposition}[section]
\newtheorem{lemma}[proposition]{Lemma}
\newtheorem{defi}[proposition]{Definition}
\newtheorem{coro}[proposition]{Corollary}
\newtheorem*{remark}{Remark}
\newcommand{\RR}{\mathbb{R}}
\newcommand{\p}{\partial}
\newcommand{\eps}{\epsilon}
\begin{document}

\author{P. Germain, N. Masmoudi and J. Shatah}

\title{Global solutions for 2D quadratic Schr\"odinger equations}

\maketitle

\begin{abstract}
We prove global existence and scattering for a class of quadratic Schr\"odinger equations in dimension 2. The proof relies on the idea of space-time resonance.
\end{abstract}

\maketitle

\section{Introduction}

In the present article we examine   global existence  and asymptotic behavior of  solutions with small initial data for nonlinear Schr\"odinger equations with quadratic nonlinearities in dimension 2.
We believe that this particular model  is a good representative of a wider class of  \emph{weakly dispersive nonlinear equations}, i.e.  nonlinear dispersive equations where  the linear  decay due to dispersion is not strong enough a priori to overcome the nonlinear effects over large intervals of time: estimates relying only on the power of the nonlinearity, but not  on its structure, fail.   As we will argue later, the key concept in this setting becomes \emph{space-time resonances}.


\subsection{Known results}
Consider a nonlinear Schr\"odinger equation
$$
\partial_t u + i \Delta u = N_p(u) \qquad (t,x) \in \mathbb{R} \times \mathbb{R}^d,
$$
where $u$ is complex-valued, and $N_p$ a (nonlinear) function of $u$ which is homogeneous of order $p$.
We will review results concerning global existence and asymptotic behavior for small solutions. We refer to the textbooks by  Cazenave~\cite{Cazenave} and Tao~\cite{Tao} for a more general discussion.

The simplest problem  occurs if the decay given by the linear part is strong enough, or $p$ large enough to use  dispersive or Strichartz estimates to conclude that asymptotic completeness holds for small data, i.e. the wave operators are defined, and are one to one. For smaller $p$, more interesting effects appear, and the structure of the nonlinearity starts to play a role. Two values of $p$ are particularly important: the Strauss exponent $\displaystyle (\sqrt{d^2+12d+4}+d+2)/2d$ \cite{str}, and the short range exponent $\displaystyle 1 + 2/d$, whose values are displayed below for small dimensions.
\begin{center}
  \begin{tabular}{| c | c | c | }
    \hline 
    space dimension & short range exponent & Strauss exponent \\ \hline
    $1$ & $3$ & $(\sqrt{17}+ 3)/2$ \\ \hline
    $2$ & $2$ & $\sqrt{2}+1$ \\ \hline
    $3$ & $\frac{5}{3}$ & $2$ \\ \hline
  \end{tabular}
\end{center}

For $p$ larger than the Strauss exponent,  one expect   the existence of global solutions for small data, as well as some kind of asymptotic completeness.  For $p$ larger than the short range exponent  one expects the existence of wave operators, while for $p$ less than  the short range exponent  one expects  small solutions not to be asymptotically free.  Various global existence results for small solutions  will be illustrated below.

\subsubsection*{Wave operators for small data at $t=\infty$} 
Suppose first that $p$ lies above the short-range exponent, $p > 1 + \frac 2d$; an immediate computation shows that the solution becomes asymptotically free if it decays in $L^\infty$ at the rate prescribed by the linear part: $t^{-d/2}$.  In all known cases, wave operators can be constructed for small data for this range of $p$, but no general result seems available.  For the nonlinearity $N_p(u)= \pm |u|^{p-1}u$, see in particular Cazenave and Weissler~\cite{CW}, Ginibre, Ozawa and Velo~\cite{GOV} and Nakanishi~\cite{Nakanishi}. For small $p$ within this range, the spaces in which these wave operators exist involve weights or vector fields.

Consider now the case where $p$ lies below the short-range exponent $p \leq 1 + \frac 2d$. For the nonlinearity $N_p(u)=|u|^{p-1} u$, it was proved by Barab~\cite{Barab} that non trivial asymptotically free states cannot exist. Modified wave operators were subsequently constructed by Ozawa~\cite{Ozawa}, and Ginibre and Ozawa~\cite{GinibreOzawa} if $p= 1 + \frac 2d$. For the nonlinearity $|u|^2$, in dimension 2, it was proved by Shimomura~\cite{Shimomura} and Shimomura and Tsutsumi~\cite{ST} that non trivial asymptotically free states cannot exist either. However, for the nonlinearities $u^3$, $u\bar u^2$, $\bar u^3$ in dimension 1, and $u^2$, $\bar u^2$ in dimension 2, wave operators were constructed  by Moriyama, Tonegawa and Tsutsumi~\cite{MTT} and Shimomura and Tonegawa~\cite{ST2}; see also Hayashi, Naumkin, Shimomura and Tonegawa~\cite{HNST}. Finally, Gustafson, Nakanishi and Tsai~\cite{GNT1} proved, in dimensions 2 and 3, the existence of wave operators for the nonlinearity $(u + 2 \bar u + |u|^2)u$ arising from the Gross-Pitaevskii equation.

\subsubsection*{Global existence and asymptotic behavior for small data at $t=0$} For $p$ larger than the Strauss exponent  one can construct global solutions for small data using simply a fixed point theorem and dispersive estimates  \cite{str}\footnote{This can be seen as follows: let $- r(p)= \frac{d}{p}- \frac{d}{2} $ be the decay of $\|u(t)\|_p$ prescribed by the linear Schr\"odinger flow; then $p$ is larger than the Strauss exponent if and only if $p\,r(p+1)>1$. Thus one can easily get the global a priori estimate $\|u(t)\|_{p+1} \lesssim t^{-r(p+1)}$ for $u$ solving $\partial_t u + i\Delta u = N_p(u)$, and $u(t=0)=u_0$ small (for the sake of simplicity, we ignore the divergence of the integral for $s-t$ close to 0):
\begin{equation*}
\begin{split}
\left\|u(t)\right\|_{p+1} & \lesssim \|e^{it\Delta} u_0\|_{p+1} +  \int_0^t \left\|e^{i(t-s)\Delta} N_p(u(s))\right\|_{p+1} \,ds
\lesssim \|u_0\|_{\frac{p+1}{p}} t^{-r(p+1)} + \int_0^t (t-s)^{-r(p+1)} \left\|u(s)\right\|_{p+1}^p\,ds.
\end{split}
\end{equation*}}.
This holds regardless of the precise form of $N_p$, and furthermore, the solution scatters, ie it is asymptotically free in a certain sense.
For the short-range exponent $p = 1 + \frac{2}{d}$, Hayashi and Naumkin~\cite{HN1} showed that a modification must be added to the free solution to describe the behavior for large time. Apart from a result of Tsutsumi Yajima~\cite{TY}, who  prove scattering in the defocusing case, we are not aware of any result for the intermediary range (between the short-range and the Strauss exponent); one would however not expect scattering to hold in general.

For other nonlinearities, there are few known examples of global existence for small data below the Strauss exponent. In dimension 3 however, global existence and scattering are known for $u^2$ and $\bar u^2$: see Hayashi and Naumkin~\cite{HN2}, Kawahara~\cite{Kawahara} and Germain, Masmoudi and Shatah~\cite{GMS1}. For $|u|u$, this is also the case (Cazenave and Weissler~\cite{CW}), but for $|u|^2$ only almost global existence is known (Ginibre and Hayashi~\cite{GH}).  For the Gross-Pitaevskii equation, Gustafson, Nakanishi and Tsai~\cite{GNT2} proved the existence of global solutions which scatter for large time.

It is interesting to notice that for the Schr\"odinger equation, there is to our knowledge no known example of a nonlinearity which yields blow up in finite time for small, smooth and localized data. Such a nonlinearity should, as we have seen, necessarily correspond to a power below the Strauss exponent. For the nonlinear wave equation, we know since John~\cite{John} and Schaeffer~\cite{Schaeffer} that blow up occurs for nonlinearities which have the homogeneity of the Strauss exponent.

\subsubsection*{Global existence if the nonlinearity involves derivatives of $u$}
As will become clear in this article, derivatives in a nonlinearity can play the role of a null form, thus making estimates easier as far as  resonances are concerned. However in the presence of derivatives in the nonlinearity  one needs to recover the derivative loss in the estimates, thus making them more complicated. To shorten the discussion, we focus here on recent developments corresponding to nonlinearities of low power.

In dimension 3, Hayashi and Naumkin~\cite{HN3} were able to prove global existence and scattering for small data and for any quadratic nonlinearity involving at least one derivative: $u \nabla u$, $\bar u \nabla u$, $\bar u \nabla \bar u$... In dimension 2, Cohn~\cite{Cohn} obtained the same result for a nonlinearity of the type $\nabla \bar u \nabla \bar u$ (his proof, relying on a normal form transform and the use of pseudo-product operators, is actually very similar to parts of the arguments of the present article).
Finally, the main result is due to Delort~\cite{Delort}, who proved global existence for a nonlinearity of the form $u \nabla u$ or $\bar u \nabla \bar u$. His method combines the vector fields method, a normal form transform, and microlocal analysis; it enables him to prove global existence, but not scattering. Our approach to the question of global existence is quite different from his. The Fourier analysis we develop is essentially a new point of view on the vector field and normal form methods.

\subsection{The notion of space-time resonance} The concept of space-time resonance is a natural generalization of resonance for ODEs. If one considers a linear dispersive equation on $\RR^n$
\[
\p_t u = i L \Big(\frac 1 i \partial \Big) u
\]
then the quadratic time resonances can be found by considering plane wave solutions
\(
u = e^{i (L (\xi) t + \xi \cdot x )} .
\)
In this case time resonance for $u^2$ corresponds to
\[
\mathscr{T} =
\{ (\xi_1, \xi_2 ) ;
L(\xi_1) + L(\xi_2)
= L (\xi_1 + \xi_2) \}.
\]
However, time resonances tell only part of the story
for dispersive equations when one considers spatially localized solutions.
Specifically, if one considers two solutions
$u_1$ and $u_2$ with data localized in
space around the origin
and in frequency around $\xi_1$ and $\xi_2$, respectively,
then the solutions $u_1$ and $u_2$ at large time~$t$
will be spatially localized around
\(
(-\partial L (\xi_1) t)
\)
and $(- \partial L (\xi_2) t) $.
Thus quadratic spatial resonance is defined as the set $(\xi_1, \xi_2 ) \in \mathscr{S}$ where
\[
\mathscr{S}= \{ (\xi_1, \xi_2) ;
\partial L(\xi_1) = \partial L (\xi_2) \}.
\]
We define quadratic space-time resonance as
\[
\mathscr{R} = \mathscr{T} \cap \mathscr{S}.
\]
The idea is that only frequencies in $\mathscr{R}$ play a significant role in the long-term behavior of nonlinear dispersive equations. Indeed, the interaction between frequencies which are not time resonant is harmless, whereas frequencies which are not space resonant cannot interact since they have disjoint support - to be precise, this last point is valid only if the nonlinearity is local.

We believe that space-time resonances provide a key to understand the global behavior of nonlinear dispersive equations, for small data at least. We have been using this notion, along with its natural analytical framework, to study three-dimensional nonlinear Schr\"odinger equations~\cite{GMS1}, and more recently, water waves~\cite{GMS2}.

\medskip

What heuristic understanding of quadratic nonlinear Schr\"odinger equations does the notion of space-time resonance give? The three possible polynomial nonlinearities are $u^2$, $\bar u^2$, and $|u|^2$. An elementary computation (see Section~\ref{resonances}) shows that for the two first, $\mathscr{R} $ is reduced to a point, whereas it is a $d$-dimensional subspace for the third one.  This explains why, in dimension $d=3$, global existence can be proved relatively easily for $u^2$ and $\bar u^2$, whereas for $|u|^2$ only almost global existence is known.
In dimension $2$, the decay given by the linear Schr\"odinger equation is only $\frac{1}{t}$; in other words, quadratic nonlinearities are short-range, making global existence results very delicate. Actually, the only known results hold for nonlinearities of the type $u \nabla u$ or $\bar u \nabla \bar u$; more precisely: the nonlinearities for which global existence holds exclude interactions between $u$ and $\bar u$, and involve derivatives. Why are derivatives in the nonlinearity helpful as far as global existence is concerned? This can be understood by going back to the space-time resonant set, which is equal to  the zero frequencies of the interacting waves; these zero frequencies are canceled by derivatives.

The above considerations lead us to the choice of a quadratic nonlinearity $Q(u,u)$ in the theorem below. For low frequencies, which is where resonances occur, a derivative is needed to play the role of a null form, thus $Q(u,u)$ will look like $u\nabla u$. Taking $Q$ of the same form for high frequencies would lead to a problem distinct of resonances, which is our primary focus, namely: how to use the smoothing effect of the equation to ``recover'' derivatives. Since we want to avoid this technical complication, we simply define $Q(u,u)$ to be a standard product for high frequencies.

\subsection{Main result}  Consider the following equation on $u$, a complex-valued function of $(t,x) \in \mathbb{R}\times \mathbb{R}^2$,
\begin{equation*}
\tag{NLS}
\left\{
\begin{array}{l}
\partial_t u + i \Delta u = \alpha Q(u,u) + \beta Q(\bar u, \bar u) \\
u_{|t=2} = u _2 \stackrel{def}{=} e^{-2i\Delta} u_\ast,
\end{array}
\right.
\end{equation*}
where $\alpha$, $\beta$ are complex numbers and $Q$ is defined by
$$
\widehat{Q(f,g)}(\xi) = \int q(\xi,\eta) \widehat{f}(\eta) \widehat{g}(\xi-\eta) d\eta,
$$
$\widehat{\cdot}$ denoting the Fourier transform,  and where the symbol $q$ is smooth,  linear for $|(\xi,\eta)| \leq 1$,   and equal to $1$ for $|(\xi,\eta)| \geq 2$.  
Thus $Q$ is like a derivative for low frequencies, and the identity for high frequencies.

\begin{remark}
The fact that the data are given at time $2$ does not have a deep meaning: it is simply more convenient when performing estimates, since the $L^\infty$ decay of $\frac{1}{t}$ given by the linear part of the equation is not integrable at $0$.
\end{remark}

Before stating the theorem, let us introduce the profile $f$ given by $f(t) \overset{def}{=} e^{it\Delta} u(t)$.
\begin{theorem}
\label{schrod2d}
There exists $\epsilon>0$ such that if $u_\ast$ satisfies
$$
\left\| \langle x \rangle^2 u_\ast \right\|_2 \leq \epsilon,
$$
then there exists a global solution $u$ of $(NLS)$ such that
$$
\|\langle x \rangle f\|_2 \lesssim \epsilon \;\;,\;\;\|x^2 f \|_2 \lesssim  \epsilon+ \epsilon^2 t \;\;\mbox{and}\;\;\|e^{it\Delta} f\|_\infty \lesssim \frac{\epsilon}{t}.
$$
Furthermore, this solution scatters i.e. there exists $f_\infty \in L^2$ such that
$$
\left\| f(t) - f_\infty \right\|_2 \longrightarrow 0 \;\;\;\;\;\;\mbox{as $t\rightarrow \infty$}.
$$
\end{theorem}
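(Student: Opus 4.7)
My plan is to pass to the profile $f(t)=e^{it\Delta}u(t)$ and work in Fourier variables, using Duhamel's formula to write
\[
\widehat f(t,\xi)=\widehat{u_\ast}(\xi)+\alpha\!\int_{2}^{t}\!\!\!\int e^{is\phi_+(\xi,\eta)}q(\xi,\eta)\widehat f(s,\eta)\widehat f(s,\xi-\eta)\,d\eta\,ds+\beta\!\int_{2}^{t}\!\!\!\int e^{is\phi_-(\xi,\eta)}q(\xi,\eta)\overline{\widehat f(s,-\eta)}\,\overline{\widehat f(s,\eta-\xi)}\,d\eta\,ds,
\]
with $\phi_+(\xi,\eta)=|\eta|^2+|\xi-\eta|^2-|\xi|^2$ and $\phi_-(\xi,\eta)=-|\eta|^2-|\xi-\eta|^2-|\xi|^2$. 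A direct computation yields $\mathscr{R}_{\pm}=\{\phi_\pm=0\}\cap\{\partial_\eta\phi_\pm=0\}=\{(0,0)\}$, and the smoothed-out symbol $q$ vanishes at the origin; thus $q$ acts as a null form precisely on the resonant set, which is the structural fact that makes the estimates work.

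\textbf{Bootstrap.} I would run a continuity argument for the norm
\[
\|f\|_{X(T)}=\sup_{2\leq t\leq T}\Bigl(\|\langle x\rangle f(t)\|_2+\tfrac{1}{1+t}\|x^2 f(t)\|_2+t\|e^{it\Delta}f(t)\|_\infty\Bigr),
\]
showing that the hypothesis $\|f\|_{X(T)}\leq C\epsilon$ can be improved to $\|f\|_{X(T)}\leq C\epsilon/2$. The $L^\infty$ decay of $u=e^{it\Delta}f$ follows from a stationary-phase / dispersive estimate built from the two weighted $L^2$ controls on $f$; the linear growth allowance in $\|x^2f\|_2$ keeps this estimate subcritical precisely because the extra factor of $t$ is multiplied by the quadratic smallness $\epsilon^2$. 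Since multiplication by $x_j$ corresponds to $i\partial_{\xi_j}$, bounding $\|\langle x\rangle f\|_2$ and $\|x^2 f\|_2$ reduces to bounding $\partial_\xi\widehat f$ and $\partial_\xi^2\widehat f$ in $L^2$. Differentiating Duhamel in $\xi$, the derivative can fall on $e^{is\phi_\pm}$, on $q$, or on $\widehat f(s,\xi-\eta)$; when it falls on $q$ or on $\widehat f$, a direct bilinear (pseudoproduct) estimate using the bootstrap bounds and $\|u\|_\infty\lesssim \epsilon/t$ suffices.

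\textbf{Space--time resonance argument.} The dangerous terms are those where $\partial_\xi$ strikes the oscillation and produces a factor $s\,\partial_\xi\phi_\pm$, threatening divergent time integrals. To tame them I would introduce a smooth partition of unity subordinate to the open cover $\{|\phi_\pm|\gtrsim |\xi|^2+|\eta|^2\}\cup\{|\partial_\eta\phi_\pm|\gtrsim |\xi|+|\eta|\}$, which exhausts the complement of any neighborhood of $(\xi,\eta)=(0,0)$. On the first piece I integrate by parts in time via $e^{is\phi_\pm}=(i\phi_\pm)^{-1}\partial_s e^{is\phi_\pm}$: the boundary terms are controlled by the bootstrap assumptions, and the interior term carries a $\partial_s \widehat f$ which, through the equation itself, becomes cubic in $\widehat f$ and is absorbable. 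On the second piece I integrate by parts in $\eta$ using $e^{is\phi_\pm}=(is|\partial_\eta\phi_\pm|^2)^{-1}\partial_\eta\phi_\pm\cdot\nabla_\eta e^{is\phi_\pm}$, which produces the $1/s$ factor that exactly compensates the $s$-loss. The residual region near $(\xi,\eta)=(0,0)$ is absorbed by the vanishing of $q$, which furnishes the null-form cancellation on the resonant set.

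\textbf{Main obstacle and scattering.} The hardest part will be closing the $\|x^2 f\|_2$ estimate: only linear growth in $t$ is allowed, so every bilinear estimate is borderline and one must carefully balance derivative losses against the $1/s$ gains coming from dispersion and from the integrations by parts above, especially for the low-frequency pieces where $q$ is the smoothing null form. Once this bootstrap closes, the same bilinear machinery yields $\partial_t \widehat f\in L^1_tL^2_\xi$, so $\widehat f(t)$ is Cauchy in $L^2$ and converges to some $\widehat f_\infty\in L^2$; this is exactly the scattering statement $\|f(t)-f_\infty\|_2\to 0$.
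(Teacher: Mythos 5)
Your overall philosophy (profile, Duhamel, normal form in time on the non-time-resonant region, integration by parts in $\eta$ on the non-space-resonant region, null-form cancellation of $q$ at the origin) is the paper's approach, but three of your steps conceal genuine gaps. First, the $L^\infty$ decay does \emph{not} follow from the two weighted $L^2$ bounds: since $\|x^2f\|_2$ is only $O(\epsilon^2 t)$, interpolation with $\|\langle x\rangle f\|_2\lesssim\epsilon$ gives $\|\langle x\rangle^{1+\delta}f\|_2\lesssim \epsilon\, t^{\delta}$, hence only $\|e^{it\Delta}f\|_\infty\lesssim \epsilon\, t^{-1+\delta}$, which does not close the bootstrap. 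The paper instead splits the Duhamel integral into $[2,t-1]$ and $[t-1,t]$, proves an $L^1$ bound on the integrand at each fixed time $s$ via $\|H(s)\|_1\lesssim\|H(s)\|_2+\|xH(s)\|_{8/5}\lesssim \epsilon^2 s^{-5/4}$, and applies the dispersive estimate slice by slice (plus a separate paraproduct/Bernstein argument for the boundary term $g$). Second, a single bootstrap norm on $f$ with $\|x^2f\|_2\lesssim\epsilon^2 t$ is not enough to close the quadratic estimates: feeding that bound into terms like $\int_2^t s^{-1}\|x^2f\|_2\|e^{-is\Delta}f\|_\infty\,ds$ produces a logarithmic divergence. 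The paper avoids this by decomposing $f=u_\ast+g+h$ and proving \emph{different} estimates for the two pieces — $g$ (the boundary term of the normal form) is pointwise small but spreads like $\|x^2g\|_2\lesssim\epsilon^2 t$, while $h$ stays localized, $\|x^2h\|_2\lesssim\epsilon^2 t^{5/8}$ — and this asymmetry is used throughout.

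Third, ``the interior term carries a $\partial_s\widehat f$ which becomes cubic and is absorbable'' is exactly where the hardest work lies. The resulting trilinear symbols have a flag singularity (a product of a trilinear Coifman--Meyer symbol with bilinear singular factors in $(\xi,\eta)$ and $(\eta,\sigma)$), and such operators are \emph{not} covered by the Coifman--Meyer theorem; the paper must prove a dedicated boundedness theorem for pseudo-products with flag singularities (its Appendix), including a substitute for the unavailable $L^\infty\times L^\infty\times L^2\to L^2$ endpoint. Moreover, one of the cubic phases, $\varphi_{-++}$, has a two-dimensional space-time resonant set $\{\xi=\sigma=\tfrac12\eta\}$, so neither of your two integrations by parts applies near it; it is rendered harmless only through the special identity $\partial_\xi\varphi_{-++}=-2\partial_\eta\varphi_{-++}-\partial_\sigma\varphi_{-++}$, which converts the dangerous factor $s\,\partial_\xi\varphi$ into harmless $\eta$- and $\sigma$-derivatives. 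Without these two ingredients the cubic terms cannot be closed as claimed.
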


\begin{remark}
Using the tools developed in this article, more general nonlinear Schr\"odinger equations can be treated, we give below a few examples.
\begin{enumerate}
\item The conclusion of the theorem still holds if any cubic terms of polynomial type are added, that is for the following equation
$$
\partial_t u + i \Delta u = \alpha Q(u,u) + \beta Q(\bar u, \bar u) + \gamma uuu + \delta uu\bar u + \epsilon u \bar u \bar u + \zeta \bar u \bar u \bar u
$$
(notice that it is not trivial to obtain the $L^\infty$ decay proved in the theorem even if the nonlinearity consists \emph{only} of cubic terms).
\item The theorem can be extended in a straightforward way to systems for which no quadratic or cubic space-time resonances occur.
\item Finally, it is possible to handle more general pseudo-products than $Q$.
It should also be possible to extend our result to the case where $Q(u,v)=\alpha u \nabla v + \beta \bar u \nabla \bar v$ by analyzing high frequencies more carefully than we have done.   Finally, we remark that the fact that $q$ is linear for low frequencies simplifies some manipulations in the following, but is not essential.
\end{enumerate}
\end{remark}

\subsection{Plan of the proof}

The article is structured as follows:
\smallskip

\noindent$\bullet$  In Section~\ref{resonances}, we analyze the resonant structure of the different terms of the equation, and perform a normal form transform on a certain part of the nonlinearity. This yields two terms, $g$ and $h = h_1 + h_2 +h_3$, which have different behaviors, and will satisfy different estimates, stated in section~\ref{outline}: (\ref{estimg}) for $g$ and (\ref{estimh}) for $h$. The proof of these estimates, performed in sections~\ref{sectiong} to~\ref{sectionh3} will give Theorem~\ref{schrod2d}.

\noindent$\bullet$   In Section~\ref{linear} we recall or establish  basic linear harmonic analysis results .

\noindent$\bullet$ In Section~\ref{multilinear} we turn to basic multilinear harmonic analysis, specifically  pseudo-product operators.

\noindent$\bullet$ In Section~\ref{sectiong}, the estimates~(\ref{estimg}) are established for $g$.

\noindent$\bullet$  In Section~\ref{esth1}, the estimates~(\ref{estimh}) are established for $h_1$.

\noindent$\bullet$ In Sections~\ref{h2hh} and~\ref{h2fg}, the estimates~(\ref{estimh}) are established for  $h_2$.

\noindent$\bullet$  In Section~\ref{sectionh3}, the estimates~(\ref{estimh}) are established for $h_3$.

\noindent$\bullet$Finally, in the appendix~\ref{appendix}, we prove boundedness of multilinear operators with flag singularities, a fundamental result of harmonic analysis that is needed in the proof.

\subsection{Notations}

\label{notations}  We denote by $C$ constants that may vary from one line to another, and
use the standard notation $A \lesssim B$ if there exists a constant $C$ such that $A \leq CB$, and $A \sim B$ if $B \lesssim A$ and $A \lesssim B$.
The Fourier transform of $f$ is denoted by  $\widehat{f}$ or $\mathcal{F}(f)$; the normalisation is the following
$$
\widehat{f}(\xi) = \frac{1}{2\pi} \int_{\mathbb{R}^2} e^{-ix\xi} {f}(x) \,dx.
$$
The Fourier multiplier with symbol $m$ is given by
$$
m(D) f \overset{def}{=} \mathcal{F}^{-1} m(\xi) \widehat{f} (\xi).
$$

\section{Computation of the resonances and first transformation of the equation}

\label{resonances}


Recall that $f$ denotes the profile of $u$
$
f(t,x) \overset{def}{=} e^{it\Delta} u(t,x) $ or  $\hat f (t,\xi)= e^{-i|\xi|^2t}\hat u(t,\xi)$.
Then
\begin{equation}
\label{derf}
\partial_t f (t,x) = e^{it\Delta} (\alpha Q(u,u) + \beta Q(\bar u, \bar u))
\end{equation}
thus
\begin{equation}
\label{duhamelversion}
\begin{split}
\hat{f}(t,\xi) = & \hat{u} _\ast(\xi) + \alpha \int _2^t \!\!\int e^{is\varphi_{++}(\xi,\eta)}  q(\xi,\eta) \hat{f}(s, \xi - \eta)  \hat{f}(s,\eta) d\eta  \, ds \\ 
& \;\;\;\; + \beta \int _2^t \!\!\int e^{is\varphi_{--}(\xi,\eta)} q(\xi,\eta) \hat{\bar f}(s, \xi - \eta)  \hat{\bar f}(s,\eta) d\eta  \, ds
\end{split}
\end{equation}
where 
$$
\varphi_{\pm \pm}  \overset{def}{=} -|\xi|^2 \pm |\eta|^2 \pm |\xi - \eta|^2.
$$

\subsection{Computation of the resonances}

The analysis that we will perform will rely on our understanding of resonances between two or three wave packets. In the present section, we describe the space, time, and space-time resonant sets; then we define cut-off functions, which split the $(\xi,\eta)$ or $(\xi,\eta,\sigma)$ plane into the different types of resonant sets.

\subsubsection{Quadratic resonances}

Due to our choice of nonlinearity, the only type of quadratic interactions occuring are two $+$ waves giving a $+$ wave or two $-$ waves giving a $+$ wave, or for short: ``$++$ gives $+$'' and ``$--$ gives $+$''. The corresponding phase functions are
$$
\varphi_{++}(\xi,\eta) = -|\xi|^2 + |\eta|^2 + |\xi - \eta|^2 \;\;\;\;\mbox{and}\;\;\;\; \varphi_{--}(\xi,\eta) = -|\xi|^2 - |\eta|^2 - |\xi - \eta|^2 .
$$
A simple computation gives that the space, time, and space-time resonant sets are: for $\varphi_{++}$
\begin{equation*}
\begin{split}
& \mathscr{S} _{++} = \{ \partial_\eta \varphi = 0 \} = \{ \xi = 2 \eta \} \\
& \mathscr{T} _{++} = \{ \varphi = 0 \} = \{ \eta \cdot (\xi-\eta) = 0 \} \\
& \mathscr{R} _{++} = \{ \partial_\eta \varphi = 0 \} \cap \{ \varphi = 0 \} = \{ \xi = \eta = 0 \} ,
\end{split}
\end{equation*}
and for $\varphi_{--}$
\begin{equation*}
\begin{split}
& \mathscr{S} _{--} = \{ \partial_\eta \varphi = 0 \} = \{ \xi = 2 \eta \} \\
& \mathscr{T} _{--} = \{ \xi = \eta = 0 \}  \\
& \mathscr{R} _{--} = \{ \xi = \eta = 0 \} .
\end{split}
\end{equation*}
In both cases the space-time resonant set is reduced to a point! This is to a large extent the key of the above theorem.

Further notice that as far as $\varphi_{--}$ is concerned, $\mathscr{T} _{--} = \mathscr{R} _{--}$; thus for this type of interaction, we shall not have to take space resonances into account for the analysis.

\bigskip

We take this opportunity to analyze the  $u \bar u = |u|^2$  interaction ( $+-$ gives $+$ ) and explain why  this interaction is  out of the scope of our theorem.  For  $+-$ gives $+$
one easily sees that
\begin{equation*}
\begin{split}
& \varphi_{-+}(\xi,\eta) = -|\xi|^2 - |\eta|^2 + |\xi - \eta|^2 = 2 \xi \cdot \eta \\
& \mathscr{S} _{-+} = \{ \xi = 0 \} \\
& \mathscr{T} _{-+} = \{ \xi \cdot \eta = 0 \} \\
& \mathscr{R} _{-+} = \{ \xi = 0 \} .
\end{split}
\end{equation*}
Thus, the space-time resonant set is too large; this explains why global existence should not be expected, or at least why our method does not apply.

\subsubsection{Cubic resonances}

\label{cubic}

All the possible cubic interactions,  namely ``$+++$ gives $+$'', ``$+--$ gives $+$'', ``$-++$ gives $+$''``- - - gives $+$'', occur for $\mathrm{(NLS)}$ as will become clear in the next section. They correspond respectively to the phase functions
\begin{equation}
\begin{split}
& \varphi_{+++} = - |\xi|^2 + |\xi - \eta|^2 + |\eta - \sigma|^2 + |\sigma|^2 \\
& \varphi_{+--} = - |\xi|^2 + |\xi - \eta|^2 - |\eta - \sigma|^2 - |\sigma|^2 \\
& \varphi_{-++} = - |\xi|^2 - |\xi - \eta|^2 + |\eta - \sigma|^2 + |\sigma|^2 \\
& \varphi_{---} = - |\xi|^2 - |\xi - \eta|^2 - |\eta - \sigma|^2 - |\sigma|^2 \\
\end{split}
\end{equation}
A small computation shows that the space-time resonant sets are: 
\begin{equation*}
\begin{split}
& \mathscr{S} _{+++} = \{ \partial_{\eta \sigma} \varphi = 0 \} = \{ \xi = 3 \sigma = \frac{3}{2} \eta \} \\
& \mathscr{T} _{+++} = \{ \xi^2 = (\xi-\eta)^2 + (\eta-\sigma)^2 + \sigma^2 \} \\
& \mathscr{R} _{+++} =  \{ \xi = \eta = 0 \} ,\\
&\\
& \mathscr{S} _{+--} = \{ \partial_{\eta \sigma} \varphi = 0 \} = \{ \xi = \sigma = \frac{1}{2} \eta \} \\
& \mathscr{T} _{+--} = \{\eta^2 =  \xi^2 + (\eta-\sigma)^2 + \sigma^2 \} \\
& \mathscr{R} _{+--} =  \{ \xi = \eta = 0 \} ,\\
&\\
& \mathscr{S} _{-++} = \{ \partial_{\eta \sigma} \varphi = 0 \} = \{ \xi = \sigma = \frac{1}{2} \eta \} \\
& \mathscr{T} _{-++} =  \{ \xi^2 + (\xi-\eta)^2 = (\eta-\sigma)^2 + \sigma^2 \} \\
& \mathscr{R} _{-++}  = \{ \xi = \sigma = \frac{1}{2} \eta \} ,\\
&\\
& \mathscr{S} _{---} = \{ \partial_{\eta \sigma} \varphi = 0 \} =\{ \xi = 3 \sigma = \frac{3}{2} \eta  \} \\
& \mathscr{T} _{---} = \{ \xi = \eta = \sigma = 0 \} \\
& \mathscr{R} _{---} = \{ \xi = \eta = \sigma = 0 \} .
\end{split}
\end{equation*}
Note that the  space-time resonant sets  $\mathscr{R} _{+++} = \mathscr{R} _{+--} = \mathscr{R} _{---} = \{ \xi = \eta = \sigma     = 0 \}$, which seems (and will be) favorable to obtain estimates.
The set $\mathscr{R} _{-++}= \{ \xi = \sigma = \frac{1}{2} \eta \}$, which looks very problematic is actually benign  since by  the following identity
\begin{equation}
\label{parrot}
\partial_\xi \varphi_{-++} = -2 \partial_\eta \varphi_{-++} - \partial_\sigma \varphi_{-++},
\end{equation}
it will generate ``null terms".  That is  when trying to establish the weighted $L^2$ estimate, one differentiates a certain trilinear expression in $\xi$, which corresponds to adding an $x$ weight in physical space. The worst term arises when the $\xi$ derivative hits an oscillating term with phase $\varphi_{-++}$, which introduces a factor of $s\partial_\xi \varphi_{-++}$. Due to the above identity, one can substitute to this factor $s(-2 \partial_\eta \varphi_{-++} - \partial_\sigma \varphi_{-++})$, which is harmless since an integration by parts in $\eta$ or $\sigma$ makes it disappear. See Section~\ref{sectionh3} for the details.

\subsubsection{Partition of the frequency space}

\label{partition}

The proof will rely on a decomposition of the multilinear expressions, which will be achieved by splitting the $(\xi,\eta)$, or $(\xi,\eta,\sigma)$ space; this manipulation will enable us to treat separately the different types of resonnances. 

Let us first explain the procedure in the case of quadratic interactions: consider either the $++$ or the $--$ case, and define 3 smooth functions $\chi^{\pm\pm,R}$, $\chi^{\pm\pm,S}$ and $\chi^{\pm\pm,T}$ of $(\xi,\eta)$ such that 
\begin{equation*}
\begin{split}
& 0 \leq \chi^{\pm\pm+,R}\,,\,\chi^{\pm\pm,S}\,,\,\chi^{\pm\pm,T} \leq 1 \;\;\mbox{and}\;\;\chi^{\pm\pm,R} + \chi^{\pm\pm,S} + \chi^{\pm\pm,T} = 1 \;\;\;\;\mbox{for any $(\xi,\eta)$} \\
& \chi^{\pm\pm,R} = 1 \;\;\mbox{on $B(0,1)$ and $0$ outside $B(0,2)$} \\ 
& \chi^{\pm\pm,T} \;\;\mbox{and}\;\; \chi^{\pm\pm,S} \mbox{are homogeneous of degree 0 outside $B(0,2)$.} \\
& \chi^{\pm\pm,T} = 0 \;\;\mbox{on a neighbourhood of $\mathscr{T} _{\pm\pm}$} \\
& \chi^{\pm\pm,S} =0 \;\;\mbox{on a neighbourhood of $\mathscr{S} _{\pm\pm}$.}
\end{split}
\end{equation*}
Of course, the splitting in the $--$ case is easier since time resonances are trivial then and one takes 
$$
\chi^{--,S} = 0 .
$$
The case of cubic resonances is handled similarly in the cases where the space-time resonant set is trivial, ie $+++$, $+--$ and $---$. This gives cut-off functions
$$
\chi^{\pm\pm\pm,R}\;\;\;,\;\;\;\chi^{\pm\pm\pm,S}\;\;\;\mbox{and}\;\;\;\chi^{\pm\pm\pm,T}.
$$
All the cut-off functions which have been defined will be dilated as time goes by, in the following way
$$
\chi^{\pm \pm,R,S,T}_t  \overset{def}{=} \chi^{\pm \pm , R,S,T}\left( \sqrt{t} \cdot \right)\;\;\;\;\;\mbox{and}\;\;\;\;\; \chi^{\pm \pm \pm,R,S,T}_t  \overset{def}{=} \chi^{\pm \pm \pm, R,S,T}\left( \sqrt{t} \cdot \right).
$$

\subsection{Normal form transform and decomposition of $f$}

\label{normalform}

Split the integral occuring in~(\ref{duhamelversion}) using the quadratic cutoff functions, and integrate by parts in $s$ the term with $\chi^T$, using the identity
$$
\frac{1}{i\varphi_{\pm \pm}(\xi,\eta)} \partial_s e^{is\varphi_{\pm \pm}(\xi,\eta)} = e^{is\varphi_{\pm \pm}(\xi,\eta)}.
$$
(this manipulation is nothing but a normal form transform). A small computation shows that the equation~(\ref{duhamelversion}) can then be rewritten as
\begin{equation}
\label{decompositionf}
\hat{f}(t,\xi) = \hat{u}_\ast(\xi) +  \hat{g}(t,\xi) + \hat{h}(t,\xi),
\end{equation}
with 
\begin{equation*}
 \hat{g}(t,\xi) = \left.\int\left( \alpha \frac{q(\xi,\eta)}{\varphi_{++}} \chi_s^{++,T}(\xi,\eta) e^{is\varphi_{++}}
 + \beta  \frac{q(\xi,\eta)}{\varphi_{--}} \chi_s^{--,T}(\xi,\eta) e^{is\varphi_{--}}\right)
  \hat{f}(s, \xi - \eta) \hat{f}(s,\eta) d\eta \right]_2^t 
\end{equation*}
and  all the remaining terms are denoted by 
$
\widehat{h}(t,\xi) = \widehat{h}_1(\xi) + \widehat{h}_2(\xi) + \widehat{h}_3(\xi)
$
where
\begin{equation*}
\label{dolphin}
\begin{split}
 \widehat{h}_1(\xi) \overset{def}{=}  &\alpha \int _2^t \!\!\int \chi_s^{++,R}(\xi,\eta)q(\xi,\eta) e^{is\varphi_{++}}  \widehat{f}(s, \xi - \eta)   \widehat{f}(s,\eta) d\eta   ds \\
& +\beta \int _2^t \!\!\int \chi_s^{--,R}(\xi,\eta)q(\xi,\eta) e^{is\varphi_{--}}  \widehat{\bar f}(s, \xi - \eta)   \widehat{\bar f}(s,\eta) d\eta   ds \\
&  - \alpha \int_2^t \int \partial_s \chi_s^{++,T}(\xi,\eta) \frac{q(\xi,\eta)}{i \varphi_{++}} e^{is\varphi_{++}} \widehat{f}(s, \xi - \eta)  \widehat{f}(s,\eta) d\eta   ds \\
& - \beta \int_2^t \int \partial_s \chi_s^{--,T}(\xi,\eta) \frac{q(\xi,\eta)}{i \varphi_{--}} e^{is\varphi_{--}} \widehat{\bar f}(s, \xi - \eta)  \widehat{\bar f}(s,\eta) d\eta   ds \\[2 em]
 \widehat{h}_2(\xi) \overset{def}{=} & \alpha \int_2^t \!\!\int \chi_s^{++,S}(\xi,\eta) e^{is\varphi_{++}} q(\xi,\eta) \hat{f}(s, \xi - \eta)  \widehat{f}(s,\eta)d\eta  ds \\[2 em]
 \widehat{h}_3(\xi) \overset{def}{=}  &-\alpha^2 \int_2^t \int \frac{\chi_s^{++,T} (\xi,\eta) q(\xi,\eta)  + \chi_s^{++,T} (\xi,\xi - \eta) q(\xi,\xi - \eta)}{i \varphi_{++}(\xi,\eta)} \\
&\qquad \qquad \times q(\eta,\sigma) e^{is\varphi_{+++}} \widehat{f}(s,\xi-\eta) \widehat{f}(s,\eta- \sigma) \widehat{f}(s,\sigma) d\eta \, d\sigma \,ds \\[1 em]
& -\alpha \beta \int_2^t \int \frac{\chi_s^{++,T} (\xi,\eta) q(\xi,\eta)  + \chi_s^{++,T} (\xi,\xi - \eta) q(\xi,\xi - \eta)}{i \varphi_{++}(\xi,\eta)} \\
&\qquad \qquad \times q(\eta,\sigma)e^{is\varphi_{+--}} \widehat{f}(s,\xi-\eta) \widehat{ \bar f}(s,\eta- \sigma) \widehat{\bar f}(s,\sigma) d\eta \, d\sigma \,ds \\[1 em]
& - \beta \bar \alpha \int_2^t \int \frac{\chi_s^{--,T} (\xi,\eta) q(\xi,\eta)  + \chi_s^{--,T} (\xi,\xi - \eta) q(\xi,\xi - \eta)}{i \varphi_{--}(\xi,\eta)} \\
&\qquad \qquad \times q(\eta,\sigma)e^{is\varphi_{---}} \widehat{\bar f}(s,\xi-\eta) \widehat{ \bar f}(s,\eta- \sigma) \widehat{\bar f}(s,\sigma) d\eta \, d\sigma \,ds \\[1 em]
& - |\beta |^2  \int_2^t \int \frac{\chi_s^{--,T} (\xi,\eta) q(\xi,\eta)  + \chi_s^{--,T} (\xi,\xi - \eta) q(\xi,\xi - \eta)}{i \varphi_{--}(\xi,\eta)} \\
&\qquad \qquad \times q(\eta,\sigma)e^{is\varphi_{-++}} \widehat{\bar f}(s,\xi-\eta) \widehat{ f}(s,\eta- \sigma) \widehat{ f}(s,\sigma) d\eta \, d\sigma \,ds \\
\end{split}
\end{equation*}
Thus $g$  consists of the boundary terms arising from integration by parts in  $s$, $h_1$ consists of terms that are strongly localized in frequency,  $h_2$ consist of quadratic terms, and $h_3$ consists of cubic terms.
The point  here is that $g$ and $h$ satisfy different types of estimates since  $g$ is less localized in space than $h$, but is  pointwise smaller.

\section{A priori estimates and outline of the proof}

\label{outline}

The proof of the theorem will consist in the following a priori estimates: for $g$,

\begin{equation}
\label{estimg}
\|g\|_2 \lesssim \frac{\epsilon^2}{\sqrt{t}} \quad  \;\;\;\;\|\langle x \rangle g\|_2 \lesssim \epsilon^2  \quad\;\;\;\;\|x^2 g \|_2 \lesssim \epsilon^2 t  \quad \;\;\;\;\|e^{it\Delta} g\|_\infty \lesssim \frac{\epsilon^2}{t} ,
\end{equation}
and for $h$,
\begin{equation}
\label{estimh}
\|\langle x \rangle h\|_2 \lesssim \epsilon^2  \quad \;\;\;\;\|x^2 h \|_2 \lesssim \epsilon^2 t^{5/8}  \quad \;\;\;\;\|e^{it\Delta} h\|_\infty \lesssim \frac{\epsilon^2}{t} .
\end{equation}
Since $f = u_\ast + g + h$, this implies
\begin{equation}
\label{estimf}
\|\langle x \rangle f\|_2 \lesssim \epsilon \quad   \;\;\|x^2 f \|_2 \lesssim \epsilon +  \epsilon^2 t \quad  \;\;\|e^{it\Delta} f\|_\infty \lesssim \frac{\epsilon}{t} .
\end{equation}
The above estimates will be established separately for $g$ and the three components of $h$, i.e.,  $h_1$, $h_2$ and $h_3$. Furthermore, it will be necessary to decompose $h_2$ further, by observing that $h_2$ can be seen as a bilinear operator and that
\begin{equation}
\begin{split}
h_2 & = h_2(f,f)   = h_2(u_\ast +g+h, u_\ast +g+h) \\
& = h_2(f,u_\ast) + h_2(u_\ast,g+h) + h_2(h,h) + h_2(g,h) + h_2(h,g) + h_2(g,g).
\end{split}
\end{equation}
Terms involving $u_\ast$ are the  simplest  to estimate and  we shall skip them.  Terms of the form  $h_2(h,h)$ and terms of the form $h_2(f,g)$ or $h_2(g,f)$ will be estimated in different ways.


{\it In order to simplify the notations, we will set in the following $\alpha$ and $\beta$ equal to 1, and we will denote indifferently $f$ for $f$ or its complex conjugate $\bar f$.}

\section{Linear harmonic analysis: basic results}

\label{linear}

The following are standard inequalities and notations that we include for the convenience of the reader.

\subsection{A Gagliardo-Nirenberg type inequality}  For Schr\"odinger equation the generator of the pseudo conformal transformation $J \overset{def}{=} x - 2it\partial$ plays the role of partial differentiation.  Thus we have

\begin{lemma}
\label{gagnir}
The following inequality holds
$$
\left\| e^{-it\Delta} (x f) \right\|_4^2 \leq \left\| e^{-it\Delta} f \right\|_\infty \left\| e^{-it\Delta} (x^2 f) \right\|_2 
$$
\end{lemma}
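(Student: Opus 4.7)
The plan is to exploit the conformal structure of the free Schr\"odinger flow to reduce the estimate to a standard 2D Gagliardo--Nirenberg inequality. Set $v \overset{def}{=} e^{-it\Delta} f$. A direct Fourier space computation (using $\widehat{x_j g} = i\partial_{\xi_j}\hat g$ together with the symbol $e^{it|\xi|^2}$ of $e^{-it\Delta}$) yields the commutation
$$e^{-it\Delta}(x_j f) = (x_j - 2it\partial_j)\,v = J_j v,$$
and, iterating, $e^{-it\Delta}(x_j x_k f) = J_j J_k v$, where $J = x - 2it\nabla$ is the pseudo-conformal vector field introduced before the lemma.

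The next step is the key factorization
$$J = -2it\,e^{-i|x|^2/(4t)}\,\nabla\,e^{i|x|^2/(4t)},$$
which follows in one line from the product rule. Setting $w \overset{def}{=} e^{i|x|^2/(4t)} v$, one reads off $|v|=|w|$, $|J_j v| = 2t|\partial_j w|$, and $|J_j J_k v| = 4t^2|\partial_j\partial_k w|$. The powers of $2t$ then cancel between the two sides of the inequality, and the claim is reduced to the 2D estimate
$$\|\nabla w\|_{L^4}^2 \lesssim \|w\|_{L^\infty}\,\|D^2 w\|_{L^2}.$$

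This last Gagliardo--Nirenberg-type inequality is obtained by a single integration by parts. Writing
$$\int |\nabla w|^4\,dx = -\int w\,\nabla\!\cdot\!\bigl(|\nabla w|^2\,\nabla\bar w\bigr)\,dx,$$
one bounds the divergence on the right by $O(|\nabla w|^2\,|D^2 w|)$, pulls out $\|w\|_{L^\infty}$, and applies Cauchy--Schwarz to get $\|\nabla w\|_{L^4}^4 \lesssim \|w\|_{L^\infty}\|\nabla w\|_{L^4}^2\|D^2 w\|_{L^2}$, from which the estimate follows by dividing through.

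There is no real obstacle here; every step is standard. The only points requiring a bit of care are verifying the factorization of $J$ (and hence the identification $|J v|=2t|\nabla w|$) and the reading of $x$, $x^2$ componentwise so that the argument applies to each scalar entry (the inequality being insensitive to the distinction between $|x|^2 f$ and the tensor $x_j x_k f$, since one is simply the trace of the other and the bounds are obtained componentwise).
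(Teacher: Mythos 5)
Your proof is correct and follows the same route as the paper: the commutation $e^{-it\Delta}x = Je^{-it\Delta}$ together with the factorization $J = -2it\,e^{-i|x|^2/4t}\,\nabla\,e^{i|x|^2/4t}$ reduces the claim to a Gagliardo--Nirenberg inequality for the gauged function $w = e^{i|x|^2/4t}e^{-it\Delta}f$, which is exactly what the paper does. The only difference is that the paper cites the Gagliardo--Nirenberg estimate as standard (stated with $\Delta w$ in place of your $D^2 w$, an immaterial variant), whereas you also give its one-line integration-by-parts proof; this is a helpful addition but not a different approach.
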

\begin{proof}  The proof relies on the observation that $e^{-it\Delta}x = J e^{-it\Delta}$, 
with  $J = 2it e^{-i\frac{x^2}{4t}} \partial e^{i\frac{x^2}{4t}}$.  Thus we get
\begin{equation*}
\begin{split}
\|e^{-it\Delta}xf\|_4 & = \|J e^{-it\Delta} f \|_4^2   = 4 t^2 \| e^{-i\frac{x^2}{4t}} \partial e^{i\frac{x^2}{4t}} e^{-it\Delta} f \|_4^2 
  \lesssim t^2 \| e^{-it\Delta} f \|_\infty \| \Delta e^{i\frac{x^2}{4t}} e^{-it\Delta} f \|_2\\
&   \lesssim \| e^{-it\Delta} f \|_\infty \| J^2 e^{-it\Delta} f \|_2    \lesssim \| e^{-it\Delta} f \|_\infty \| e^{-it\Delta} x^2 f \|_2,
\end{split}
\end{equation*}
where we used the standard  Gagliardo-Nirenberg inequality for the first inequality.
\end{proof}

\subsection{Littlewood-Paley theory}

\label{LPT}

Consider $\theta$ a function supported in the annulus $\mathcal{C}(0,\frac{3}{4},\frac{8}{3})$ such that
$$
\mbox{for $\xi \neq 0$,}\;\;\;\;\sum_{j \in \mathbb{Z}} \theta \left( \frac{\xi}{2^j} \right) = 1 .
$$
Define first
$$
\Theta(\xi) \overset{def}{=} \sum_{j <0} \theta \left( \frac{\xi}{2^j} \right)
$$
and then the Fourier multipliers
$$
P_j \overset{def}{=} \theta \left( \frac{D}{2^j} \right) \;\;\;\;\; P_{<j} = \Theta \left( \frac{D}{2^j} \right)\;\;\;\;\; P_{\leq j} = P_j + P_{<j} .
$$
This gives a homogeneous and an inhomogeneous decomposition of the identity (for instance, in $L^2$)
$$
\sum_{j \in \mathbb{Z}} P_j = \operatorname{Id} \;\;\;\;\mbox{and}\;\;\;\;\;P_{<0} + \sum_{j \geq 0} P_j = \operatorname{Id}.
$$
All these operators are bounded on $L^p$ spaces:
$$
\mbox{if $1<p<\infty$,}\;\;\;\; \|P_j f \|_p \lesssim \|f\|_p \;\;\;\;,\;\;\;\; \|P_{<j} f \|_p \lesssim \|f\|_p.
$$
Also recall Bernstein's lemma: if $1\leq q\leq p \leq \infty$,
\begin{equation}
\label{lemmadeltaj}
\|P_j f \|_p \leq 2^{2j\left( \frac{1}{q}-\frac{1}{p} \right)} \left\| P_j f \right\|_q\;\;\;\;\;\;\mbox{and}\;\;\;\;\; \left\| P_{<j} f \right\|_p \leq 2^{2j\left( \frac{1}{q}-\frac{1}{p} \right)} \left\| P_{<j} f \right\|_q .
\end{equation}

Finally, we will need the Littlewood-Paley square and maximal function estimates

\begin{theorem}
\label{LP}
(i) If $f = \sum f_j$, with $\operatorname{Supp}(f_j) \subset \mathcal{C}(0,c2^{-j},C2^{-j})$ (the latter denoting the annulus of center $0$, inner radius $c2^{-j}$, outer radius $C2^{-j}$), and $1 < p < \infty$,
$$
\left\| \sum_j f_j \right\|_p \lesssim \left\| \left[ \sum_j f_j^2 \right]^{1/2} \right\|_p.
$$
Furthermore, denoting $\displaystyle Sf \overset{def}{=} \left[ \sum_j (P_j f)^2 \right]^{1/2}$, $\displaystyle \|Sf\|_p \sim \| f \|_p .$

(ii) If $1 < p \leq \infty$, denoting $ \displaystyle Mf(x) \overset{def}{=} \sup_j \left| S_j f (x) \right|$, $\displaystyle \|Mf\|_p \lesssim \|f\|_p$.
\end{theorem}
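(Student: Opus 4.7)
The plan for part (i) is the classical randomization argument via Rademacher functions combined with the Mikhlin--H\"ormander multiplier theorem and Khintchine's inequality; the plan for part (ii) is the pointwise comparison of the partial-sum operators with the Hardy--Littlewood maximal function.

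I would first establish the equivalence $\|Sf\|_p \sim \|f\|_p$, which is the heart of (i). Let $\{r_j\}_{j\in\mathbb{Z}}$ be Rademacher functions on $[0,1]$ and define
\[
T_\omega f \;=\; \sum_j r_j(\omega) P_j f,
\]
whose symbol $m_\omega(\xi) = \sum_j r_j(\omega) \theta(\xi/2^j)$ satisfies $|\partial^\alpha m_\omega(\xi)| \lesssim |\xi|^{-|\alpha|}$ uniformly in $\omega$, since at each $\xi \neq 0$ only $O(1)$ of the bumps $\theta(\xi/2^j)$ are nonzero. The Mikhlin--H\"ormander theorem yields $\|T_\omega f\|_p \lesssim \|f\|_p$ uniformly in $\omega$ for $1<p<\infty$. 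Taking $L^p(dx)$, integrating in $\omega$, and applying Khintchine's inequality
\[
\Bigl(\sum_j |a_j|^2\Bigr)^{1/2} \;\sim\; \Bigl(\int_0^1 \Bigl|\sum_j r_j(\omega) a_j\Bigr|^p d\omega\Bigr)^{1/p}
\]
with $a_j = P_j f(x)$, one obtains $\|Sf\|_p \lesssim \|f\|_p$. The converse follows by duality: pair $f$ against $g \in L^{p'}$ with $\|g\|_{p'}=1$, write $\int f\bar g = \sum_j \int P_j f \cdot \overline{\widetilde P_j g}$ with $\widetilde P_j$ a slightly thickened LP projector equal to the identity on $\operatorname{Supp}\theta(\cdot/2^j)$, then apply Cauchy--Schwarz in $j$ and the direct bound on $\|Sg\|_{p'}$. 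For the general statement of (i), where $f = \sum f_j$ with $\operatorname{Supp}(\widehat f_j) \subset \mathcal{C}(0,c2^{-j},C2^{-j})$, the finite overlap of the enlarged LP bumps implies that each $P_k f$ involves only boundedly many $f_j$'s, hence $Sf \lesssim \bigl(\sum_j |f_j|^2\bigr)^{1/2}$ pointwise, and the equivalence just proved closes the estimate.

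For (ii), each operator $S_j$ (whether $P_j$ or $P_{<j}$) is convolution with a kernel of the form $K_j(x) = 2^{2j}\Psi(2^j x)$ with $\Psi$ Schwartz. Such $K_j$ admits a radially decreasing integrable majorant $2^{2j}\Phi(2^j x)$, so the standard estimate gives $|S_j f(x)| \lesssim \mathcal{M}_{HL} f(x)$ uniformly in $j$, where $\mathcal{M}_{HL}$ is the Hardy--Littlewood maximal function. Taking $\sup_j$ then yields $Mf \lesssim \mathcal{M}_{HL} f$ pointwise, and the classical maximal inequality $\|\mathcal{M}_{HL}f\|_p \lesssim \|f\|_p$ for $1 < p \leq \infty$ concludes. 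The only subtle point in the whole argument is the uniform Mikhlin bound on $m_\omega$, which however reduces immediately to the finite-overlap property of the LP bumps, so I anticipate no genuine obstacle: all remaining ingredients are textbook tools.
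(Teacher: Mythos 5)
The paper offers no proof of Theorem~\ref{LP} at all: it is quoted as standard background (Stein's book is the reference used elsewhere), so your argument can only be judged on its own terms. Most of it is the correct classical route: the randomization/Khintchine/Mikhlin argument for $\|Sf\|_p\sim\|f\|_p$, the duality argument for the lower bound, and the domination of $S_jf$ by the Hardy--Littlewood maximal function in part (ii) are all sound, including the finite-overlap observation that makes the Mikhlin bound on $m_\omega$ uniform in $\omega$.

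There is, however, one step that fails as written: the claimed pointwise bound $Sf \lesssim \bigl(\sum_j|f_j|^2\bigr)^{1/2}$ in your reduction of the first assertion of (i). Finite overlap does give $|P_kf|\le\sum_{j\approx -k}|P_kf_j|$ with $O(1)$ terms, but $|P_kf_j(x)|$ is \emph{not} pointwise controlled by $|f_j(x)|$: $P_k$ is a convolution operator whose kernel has tails, so $P_kf_j$ is nonzero far from the set where $f_j$ lives. The correct pointwise bound is $|P_kf_j|\lesssim\mathcal{M}f_j$, with $\mathcal{M}$ the Hardy--Littlewood maximal function, which yields $Sf\lesssim\bigl(\sum_j(\mathcal{M}f_j)^2\bigr)^{1/2}$; one must then invoke the Fefferman--Stein vector-valued maximal inequality $\|(\sum_j(\mathcal{M}f_j)^2)^{1/2}\|_p\lesssim\|(\sum_jf_j^2)^{1/2}\|_p$ --- precisely the inequality the paper itself quotes in Appendix~\ref{appendix}. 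Alternatively you can bypass the reduction entirely: pair $\sum_jf_j$ with $g\in L^{p'}$, insert thickened projectors $\widetilde P_j$ whose symbols equal $1$ on $\mathcal{C}(0,c2^{-j},C2^{-j})$, write $\int(\sum_jf_j)\bar g=\sum_j\int f_j\,\overline{\widetilde P_jg}$, and apply Cauchy--Schwarz in $j$ together with the already-established bound $\|(\sum_j|\widetilde P_jg|^2)^{1/2}\|_{p'}\lesssim\|g\|_{p'}$. With either repair the proof is complete.
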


\subsection{Fractional integration and dispersion} To some extent, the approach that we follow transforms the question `` how does the linear Schr\"odinger flow and resonances interact?'' into ``how can one combine fractional integration and the dispersive estimates for the Schr\"odinger group?''

The following lemma will thus be very useful. Define a smooth function $Z$  such that $Z(\xi)=|\xi|^{-1}$ 
for $|\xi|\geq 2$ and $Z(\xi)=1$ for $|\xi|\leq 1$. Then set for $\alpha \geq 0$
$$ 
\Lambda_t^{-\alpha} \overset{def}{=} \sqrt{t}^{\alpha} Z^\alpha  \left( \sqrt{t}|D| \right) ,
$$
thus $\Lambda_t^{-\alpha}$ is like fractional integration of order $\alpha$ for frequencies $\gtrsim \frac{1}{\sqrt{t}}$, and like $\sqrt{t}^{\alpha}$ for frequencies $\lesssim \frac{1}{\sqrt{t}}$.
\begin{lemma}
\label{boundlin}
\begin{itemize}
(i) If $\alpha \geq 0$, and either $1 \leq p,q \leq \infty$, and $0 \leq \frac{1}{q} - \frac{1}{p} < \frac{\alpha}{2}$, or $1 \leq p,q < \infty$ and $0 \leq \frac{1}{q} - \frac{1}{p} = \frac{\alpha}{2}$ there holds
$$
\left\| \Lambda_t^{-\alpha} f \right\|_p  \lesssim t^{\frac{\alpha}{2} + \frac{1}{p} - \frac{1}{q}} \|f \|_q  .
$$
(ii) If $1\leq p \leq 2$, there holds
$$
\left\| e^{it\Delta} f \right\|_{p'} \lesssim \frac{1}{t^{\frac{d}{p} - \frac{d}{2}}} \|f\|_p .
$$
(iii) If $1 \leq p \leq 2$, and $2^j t^2 \geq 1$
$$
\displaystyle \left\|P_{j} e^{it\Delta} f \right\|_p \lesssim \left(2^{2j} t \right)^{\frac{2}{p}-1} \|f\|_p .
$$
(iv) If $1 \leq q \leq 2 \leq p \leq \infty$, $\alpha \geq 0$, $1 \leq p,q < \infty$, and $0 \leq \frac{1}{q} - \frac{1}{p} \leq \frac{\alpha}{2}$, there holds
$$
\left\| \Lambda_t^{-\alpha} e^{it\Delta} f \right\|_{p} \lesssim t^{\frac{\alpha}{2} + \frac{1}{p} - \frac{1}{q}} \| f \|_q .
$$
\end{itemize}
\end{lemma}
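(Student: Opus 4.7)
The plan is to prove the four parts in turn, with (ii) and (iii) being essentially standard and with the main work in (i) and a clean composition argument for (iv).

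For part (i), I would analyze $\Lambda_t^{-\alpha}$ as a convolution operator. A change of variables shows that the kernel factors as $K_t(x) = t^{(\alpha-d)/2} K(x/\sqrt{t})$, where $K = \mathcal{F}^{-1}(Z^\alpha)$. Since $Z^\alpha$ is smooth, behaves like $|\xi|^{-\alpha}$ at infinity and equals $1$ near the origin, $K$ has a singularity of order $|x|^{\alpha-d}$ at the origin together with rapid decay at infinity. Hence $K \in L^r$ precisely for $1 \leq r < d/(d-\alpha)$. Young's inequality with the Young exponent $1/r = 1+1/p-1/q$ then gives the strict-inequality regime $1/q - 1/p < \alpha/2$, and the endpoint $1/q - 1/p = \alpha/2$ is handled by the Hardy--Littlewood--Sobolev inequality, which is why the endpoints $p = \infty$ or $q = 1$ must be excluded there.

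Part (ii) is the textbook dispersive estimate $\|e^{it\Delta}f\|_\infty \lesssim t^{-d/2}\|f\|_1$ combined with the $L^2$ isometry and Riesz--Thorin. For part (iii), I would write the kernel of $P_j e^{it\Delta}$ as
\[
K_j(x,t) \;=\; c\int \theta(\xi/2^j)\, e^{-it|\xi|^2 + ix\cdot \xi}\,d\xi,
\]
and perform a stationary-phase analysis: the stationary point $\xi_* = x/(2t)$ lies in the support of $\theta(\cdot/2^j)$ only when $|x| \sim 2^j t$, on which region $|K_j| \lesssim t^{-d/2}$; off this region, repeated non-stationary integration by parts gives rapid decay. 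This yields $\|K_j\|_1 \lesssim (2^{2j}t)^{d/2}$, which in $d=2$ is $2^{2j}t$. Combined with the trivial $L^2 \to L^2$ bound $\|P_j e^{it\Delta}\|_{2\to 2} \leq 1$ and Riesz--Thorin interpolation at exponent $\theta = 2/p - 1$, this gives (iii).

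Finally, for (iv), I would chain (ii) and (i): apply (ii) to get $e^{it\Delta} : L^q \to L^{q'}$ with cost $t^{-(d/q-d/2)}$, then apply (i) to move from $L^{q'}$ to $L^p$ at cost $t^{\alpha/2 + 1/p - 1/q'}$. The two exponents add to exactly $\alpha/2 + 1/p - 1/q$, as desired. The condition required by (i) is $1/q' - 1/p \leq \alpha/2$; but the hypotheses $q \leq 2 \leq p$ give $1/q' = 1 - 1/q \leq 1/q$, so this is automatic from $1/q - 1/p \leq \alpha/2$. The only delicate point is checking that the endpoint case of (i) falls within its allowed range; this is fine because $q > 1$ and $p < \infty$ in the endpoint regime keep $q',p \in [1,\infty)$. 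The main obstacle across the whole lemma is really the singularity analysis of $K$ in (i) (and the corresponding endpoint HLS bound), since everything else reduces to rescaling arguments and interpolation.
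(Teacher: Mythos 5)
Your proposal for parts (i) and (ii) is fine and matches the standard arguments the paper implicitly invokes. For part (iii), you take a genuinely different route: you do stationary-phase analysis of the kernel of $P_j e^{it\Delta}$, splitting into the critical region $|x|\sim 2^jt$ (where the uniform bound $t^{-d/2}$ holds) and the non-stationary region (rapid decay by integration by parts). The paper instead never touches oscillatory integrals: it reduces by rescaling to $t=1$, $j\geq 0$, and bounds the $L^1$ kernel norm by $\|g\|_2^{1/2}\|x^2 g\|_2^{1/2}$ (Cauchy--Schwarz with the weight $(1+|x|^2)$), then passes to the Fourier side by Plancherel and differentiates $\theta(\xi/2^j)e^{-i|\xi|^2}$ twice. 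The paper's version is shorter and entirely elementary; yours gives the same bound but requires more care with the non-stationary tail.

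The genuine gap is in part (iv). You compose $e^{it\Delta}:L^q\to L^{q'}$ with $\Lambda_t^{-\alpha}:L^{q'}\to L^p$, but for the second map to make sense under (i) you need \emph{both} $\frac1{q'}-\frac1p\leq\frac\alpha2$ (which you check) \emph{and} $\frac1{q'}-\frac1p\geq 0$, i.e.\ $q'\leq p$, equivalently $\frac1q+\frac1p\leq 1$ (which you do not check, and which can fail: e.g.\ $q$ near $1$, $p$ near $2$ gives $q'$ near $\infty>p$, and there is no estimate $\Lambda_t^{-\alpha}:L^\infty\to L^2$). You could patch this by using the other ordering $\Lambda_t^{-\alpha}:L^q\to L^{p'}$ followed by $e^{it\Delta}:L^{p'}\to L^p$ when $\frac1q+\frac1p\geq 1$, but as written the argument is incomplete. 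The paper sidesteps this dichotomy entirely: it factors $\Lambda_t^{-\alpha}=\Lambda_t^{-\alpha_1}\Lambda_t^{-\alpha_2}$ (Fourier multipliers commute with $e^{it\Delta}$), with the exponents chosen so that the inner piece goes $L^q\to L^2$ and the outer piece goes $L^2\to L^p$ by (i), with the $L^2$ isometry of $e^{it\Delta}$ as the pivot. Since $q\leq 2\leq p$ is assumed, this pivot always lies between the two Lebesgue indices, so no case distinction is needed.
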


\begin{proof} The points $(i)$ and $(ii)$ are standard. In order to prove $(iii)$, observe that it
follows from interpolation between the $L^2$ estimate, which is clear, and the $L^1$ estimate, which reads
$$
\mbox{if $2^j t^2 \geq 1$,}\;\;\;\;\;\;\;\;\left\| P_j e^{it \Delta} f \right\|_1 \lesssim 2^{2j} t  \|f\|_1 .
$$
By scaling, it suffices to prove this estimate if $t = 1$ and $j \geq 0$. This is done as follows
\begin{equation*}
\begin{split}
\left\| P_{j} e^{i\Delta} \right\|_{L^1 \rightarrow L^1} & \leq \left\| \mathcal{F}^{-1} \theta \left( \frac{\xi}{2^j} \right) e^{-i \xi^2} \right\|_1 
\lesssim \left\| \mathcal{F}^{-1} \theta \left( \frac{\xi}{2^j} \right) e^{-i|\xi|^2} \right\|_2^{1/2} 
 \left\| x^2  \mathcal{F}^{-1} \theta \left( \frac{\xi}{2^j} \right) e^{-i|\xi|^2} \right\|_2^{1/2}  \\
&  \lesssim  \left\|\theta \left( \frac{\xi}{2^j} \right) \right\|_2^{1/2} \left\|  \partial_\xi^2   \left[ \theta \left( \frac{\xi}{2^j} \right) e^{-i|\xi|^{2}} \right] \right\|_2^{1/2}  \lesssim 2^{2j} . 
\end{split} 
\end{equation*}


As for $(iv)$, it follows from $(i)$,  $(ii)$, and $\left\|  e^{it\Delta} \right\|_{L^2 \rightarrow L^2}=1$  
\begin{equation}
\left\| \Lambda_t^{-\alpha} e^{it\Delta} \right\|_{L^q \rightarrow L^p}  = \left\| \Lambda_t^{-\alpha \frac{\frac{1}{q}-\frac{1}{2}}{\frac{1}{q}-\frac{1}{p}}} e^{it\Delta} \Lambda_t^{-\alpha \frac{\frac{1}{2}-\frac{1}{p}}{\frac{1}{q}-\frac{1}{p}}}\right\|_{L^q \rightarrow L^p} \
 \leq \left\| \Lambda_t^{-\alpha \frac{\frac{1}{q}-\frac{1}{2}}{\frac{1}{q}-\frac{1}{p}}}\right\|_{L^q \rightarrow L^2}  \left\| \Lambda_t^{-\alpha \frac{\frac{1}{2}-\frac{1}{p}}{\frac{1}{q}-\frac{1}{p}}}\right\|_{L^2 \rightarrow L^p} .
\end{equation}
\end{proof}

\section{Multilinear harmonic analysis: pseudo-product operators}

\label{multilinear}


We only define bi and tri-linear pseudo-product operators, since these are the only cases that will be of interest in the following. These operators are defined by a symbol $m$ through
$$
T_{m(\xi,\eta)} (f_1,f_2) = {\mathcal{F}}^{-1} \int m(\xi,\eta) \hat{f}_1(\eta) \hat{f}_2(\xi-\eta) d\eta. 
$$
in the bilinear case and
$$
T_{m(\xi,\eta,\sigma)}(f_1,f_2,f_3) = {\mathcal{F}}^{-1} \int m(\xi,\eta,\sigma) \hat{f}_1(\sigma) \hat{f}_2(\eta-\sigma) \hat{f}_3(\xi-\eta) \,d\eta \,d\sigma
$$
in the trilinear case.

\subsection{Bounds for standard pseudo-product operators}

The fundamental theorem of Coifman and Meyer states, under a natural condition, that these operators have the same boundedness properties as the ones given by H\"older's inequality for the standard product.
\newtheorem*{nthcm}{Theorem C-M}
\begin{nthcm}[Coifman-Meyer]
\label{CoifmanMeyer}
Suppose that $m$ satisfies
\begin{equation}
\label{butterfly}
\|m\|_{CM} = \sup_{\xi,|\alpha_1| + \dots + |\alpha_n| \leq N} \left(|\xi_1| + \dots + |\xi_n| \right)^{|\alpha_1| + \dots + |\alpha_n|} |\partial_{\xi_1}^{\alpha_1} \dots \partial_{\xi_n}^{\alpha_n} m (\xi_1,\dots,\xi_n)|
\end{equation}
where $N$ is a sufficiently large number. Then the operator
$$
T_m : L^p \times L^q  \rightarrow  L^r
$$
is bounded for 
$
\frac{1}{r} = \frac{1}{p} + \frac{1}{q} , \quad 1<p,q \leq \infty\quad  \mbox{and} \quad 0<r<\infty   .
$
Furthermore, the bound is less than a multiple of $\|m\|_{CM}$.
\end{nthcm}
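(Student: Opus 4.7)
The plan is to run the classical paraproduct argument: decompose the symbol dyadically in the input frequencies, expand each localized piece in a Fourier series on its support, and reassemble using the Littlewood-Paley square and maximal function estimates. I sketch the bilinear case $n=2$; the trilinear case is analogous. With $\theta_j(\xi) := \theta(\xi/2^j)$ from Section~\ref{LPT}, split
$$T_m = \sum_{j_1 < j_2-2} T_{m\theta_{j_1}\theta_{j_2}} + \sum_{j_1 > j_2+2} T_{m\theta_{j_1}\theta_{j_2}} + \sum_{|j_1-j_2|\le 2} T_{m\theta_{j_1}\theta_{j_2}} =: \Pi_1 + \Pi_2 + \Pi_3.$$
By symmetry we need only handle $\Pi_1$ and $\Pi_3$; summing $\Pi_1$ over $j_1$ yields the low-high paraproduct $\Pi_1(f_1,f_2) = \sum_j T_{\widetilde m_j}(P_{<j-2} f_1, P_j f_2)$ with $\widetilde m_j := m\,\Theta(\xi_1/2^{j-2})\theta(\xi_2/2^j)$, supported in a box of side $\sim 2^j$.

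Next I would expand each $\widetilde m_j$ in a Fourier series on a cube of side $\sim 2^j$ enclosing its support: $\widetilde m_j(\xi_1,\xi_2) = \sum_{k\in\mathbb{Z}^{2d}} c^j_k\, e^{i(k_1\cdot\xi_1+k_2\cdot\xi_2)/2^j}$. The scaling-invariant hypothesis~(\ref{butterfly}) combined with repeated integration by parts gives $|c^j_k|\lesssim \|m\|_{CM}(1+|k|)^{-N}$ uniformly in $j$ for any $N$. Substituting yields the representation
$$\Pi_1(f_1,f_2)(x) = \sum_k c_k \sum_j (P_{<j-2}f_1)(x - 2^{-j}k_1) \cdot (P_j f_2)(x - 2^{-j}k_2).$$
Each $j$-term has output Fourier support in an annulus of radius $\sim 2^j$, so for $1<r<\infty$ Theorem~\ref{LP}(i) and the pointwise bound $|P_{<j-2}f_1|\lesssim Mf_1$ combine with H\"older and Theorem~\ref{LP}(ii) to give
$$\Big\|\sum_j \cdots \Big\|_r \lesssim \Big\|\Big(\sum_j |P_j f_2|^2\Big)^{1/2} Mf_1\Big\|_r \lesssim \|Mf_1\|_p\|Sf_2\|_q \lesssim \|f_1\|_p\|f_2\|_q;$$
the translations cost only a polynomial factor in $|k|$ easily absorbed by the decay of $c_k$. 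The diagonal piece $\Pi_3$ is treated the same way after projecting the output onto frequency scale $P_k\Pi_3$ and separating $j\ge k$ (square function on the outer $k$-sum) from $j<k$ (geometric convergence in $k-j$).

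The main obstacle is the range $0<r\le 1$, where the square function equivalence $\|Sf\|_r\sim\|f\|_r$ of Theorem~\ref{LP}(i) breaks down; there one must replace $L^r$ by the real Hardy space $H^r$, invoke a vector-valued Fefferman-Stein $H^r$-bound for the maximal function, and conclude by the embedding $H^r\hookrightarrow L^r$ on the final output (whose $j$-th piece, being frequency-localized at scale $2^j$, is a molecule for $H^r$). A much milder adjustment is needed for $p=\infty$ or $q=\infty$, where the H\"older step is saved by $\|Mf\|_\infty = \|f\|_\infty$ and one need only verify convergence of the paraproduct. Once these endpoint regimes are dissected, summing $\Pi_1+\Pi_2+\Pi_3$ recovers $T_m$ with operator norm controlled by $\|m\|_{CM}$.
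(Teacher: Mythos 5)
The paper does not actually prove this statement: Theorem C--M is quoted as a black box from Coifman--Meyer \cite{CoifmanMeyer}, and the only proof the paper supplies is for its flag-singularity generalization, Theorem~\ref{FS}, in Appendix~\ref{appendix}. Your argument is the standard proof of the classical theorem, and it runs on exactly the same machinery the paper deploys in that appendix: dyadic paraproduct splitting of the symbol, Fourier-series expansion of each localized piece with coefficients decaying like $(1+|k|)^{-N}$ by~(\ref{butterfly}) and integration by parts, reduction to translated model paraproducts, and the square/maximal function estimates of Theorem~\ref{LP} (the paper likewise notes that the translations by $2^{-j}k$ cost only polynomial factors in $k$, absorbed by the coefficient decay). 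Two small points to tighten. First, the Fourier coefficients are $c^j_k$, not $c_k$: you cannot factor $\sum_k c_k$ out of the $j$-sum; the correct statement is $\sup_j|c^j_k|\lesssim\|m\|_{CM}(1+|k|)^{-N}$ together with boundedness of $f\mapsto\sum_j\alpha_j[P_jf](\cdot+2^{-j}k)$ for $\alpha_j\in\ell^\infty$ (this is exactly the remark made in Step~3 of the appendix). Second, in the regime $0<r\le 1$ your justification that each output piece ``is a molecule for $H^r$'' because it is frequency-localized is not right as stated --- frequency localization alone gives no molecular size or moment control; what you need is the standard lemma that $\|\sum_j g_j\|_{H^r}\lesssim\|(\sum_j g_j^2)^{1/2}\|_{L^r}$ for $g_j$ with Fourier support in the annulus $|\xi|\sim 2^j$ (or, alternatively, the weak-endpoint plus multilinear interpolation route). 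With those repairs the outline is correct and complete.
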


\begin{remark}
\label{CMCM}
1) For condition~(\ref{butterfly}) to hold, it suffices for $m$ to be homogeneous of degree $0$, and of class ${\mathcal{C}}^\infty$ on a $(\xi,\eta)$ sphere.  2)  If $m(\xi,\eta)$ is a Coifman-Meyer multiplier, so is $m_t(\xi,\eta) = m(t\xi,t\eta)$, for $t$ a real number. 
Furthermore, the bounds~(\ref{butterfly}) are independent of $t$, and consequently  so are the norms of $T_{m_t}$ as an operator from $L^p \times L^q$ to $L^r$, for $(p,q,r)$ satisfying the hypotheses of the Theorem.
\end{remark}

We now define a class of symbols which will be of constant use for us, due to the decomposition introduced in~(\ref{decompositionf}).

\begin{defi} 
1) We say that a symbol $\mu$ has homogeneous bounds of order $k$ (for a specified range of $(\xi,\eta)$ if it satisfies the estimates
$$
\left| \partial_{\xi_1}^{\alpha_1}\dots \partial_{\xi_n}^{\alpha_n} \mu(\xi_1,\dots,\xi_n) \right| \lesssim \left(|\xi_1| + \dots + |\xi_n| \right)^{k - |\alpha|}
$$
for sufficiently many multi-indices $\alpha$.

2) We denote $M^{k,k'}$ for a symbol smooth except at 0, such that
\begin{itemize}
 \item For $|(\xi_1,\dots,\xi_n)| \leq 2$, it has homogeneous bounds of order $k$.
\item For $|(\xi_1,\dots,\xi_n)| \geq 2$, it has homogeneous bounds of order $k'$.
\end{itemize}

3) We denote $m^{k,k'}_t$ for a symbol smooth except at 0, such that\footnote{Notice that this convention is similar to the ones for constants $C$: in the following, $m^{k,k'}_t$ stands for different symbols, as long as they are of the above type.}
\begin{itemize}
 \item For $|(\xi_1,\dots,\xi_n)| \leq 2$, $\displaystyle m^{k,k'}_t(\xi_1,\dots,\xi_n) = t^{-\frac{k}{2}}\mu (\sqrt{t}(\xi_1,\dots,\xi_n))$, where $\mu = 0$ in a neighborhood of $(0,0)$, and $\mu$ has homogeneous bounds of order $k$ for any $(\xi_1,\dots,\xi_n)$.

\item For $|(\xi_1,\dots,\xi_n)| \geq 2$, it has homogeneous bounds of order $k'$ and is independent of $t$.
\end{itemize}
\end{defi}
Thus one should think of a symbol $M^{k,k'}$ as a symbol of the form
\begin{equation}
\begin{split}
&\mbox{for $|(\xi_1,\dots,\xi_n)| \lesssim 1$,} \;\;M^{k,k'}(\xi_1,\dots,\xi_n) \sim (\xi_1,\dots,\xi_n)^k \\
&\mbox{for $|(\xi_1,\dots,\xi_n)| \gtrsim 1$,} \;\;M^{k,k'}(\xi_1,\dots,\xi_n) \sim (\xi_1,\dots,\xi_n)^{k'},
\end{split}
\end{equation}
whereas a symbol $m^{k,k'}_t$ looks like
\begin{equation}
\begin{split}
& \mbox{for $|(\xi_1,\dots,\xi_n)| \lesssim \frac{1}{\sqrt{t}}$,} \;\;m_t^{k,k'}(\xi_1,\dots,\xi_n) =0 \\
&\mbox{for $\frac{1}{\sqrt{t}} \lesssim |(\xi_1,\dots,\xi_n)| \lesssim 1$,} \;\;m^{k,k'}_t(\xi_1,\dots,\xi_n) \sim (\xi_1,\dots,\xi_n)^k \\
&\mbox{for $|(\xi_1,\dots,\xi_n)| \gtrsim 1$,} \;\;m^{k,k'}_t(\xi_1,\dots,\xi_n) \sim (\xi_1,\dots,\xi_n)^{k'}.
\end{split}
\end{equation}
In particular
$$
q(\xi,\eta) = M^{1,0} (\xi,\eta)\;\;\;\;\;\;\partial_{\xi,\eta} \varphi_{\pm,\pm} = M^{1,1}(\xi,\eta)
\;\;\;\;\;\;\partial_{\xi,\eta,\sigma} \varphi_{\pm,\pm,\pm} = M^{1,1}(\xi,\eta,\sigma)
.
$$
We now state a few calculus rules for these symbols.

\begin{proposition}
\label{calcrules}
Multiplications between symbols and differentiations of symbols satisfy
\[
\begin{split}
& M^{k,k'} m_t^{l,l'} = m_t^{k+l,k'+l'}   \qquad \qquad
 m_t^{k,k'} m_t^{l,l'} = m_t^{k+l,k'+l'} \\
& \partial_{\xi,\eta} M^{k,k'} = M^{k-1,k'-1} \qquad\qquad
 \partial_{\xi,\eta} m_t^{k,k'} = m_t^{k-1,k'-1} \qquad\qquad
 \partial_t m_t^{k,k'} = \frac{1}{t} m_t^{k,k'} .
\end{split}
\]
\end{proposition}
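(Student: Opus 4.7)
The plan is to verify each identity directly from the definition, in each case treating the low-frequency region $|(\xi_1,\dots,\xi_n)| \leq 2$ and the high-frequency region $|(\xi_1,\dots,\xi_n)| \geq 2$ separately, and using the Leibniz rule to control derivatives. Since both classes $M^{k,k'}$ and $m_t^{k,k'}$ are defined by upper bounds on derivatives (with the convention of the footnote that the notation stands for any representative), the identities are to be read as inclusions of classes.

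First I would dispose of the two product rules $M^{k,k'} m_t^{l,l'} = m_t^{k+l,k'+l'}$ and $m_t^{k,k'} m_t^{l,l'} = m_t^{k+l,k'+l'}$. In the high-frequency region both factors are independent of $t$ and satisfy homogeneous bounds of orders $k',l'$, so Leibniz yields homogeneous bounds of order $k'+l'$ for the product. In the low-frequency region, write $m_t^{l,l'}(\zeta) = t^{-l/2}\mu(\sqrt{t}\zeta)$ with $\mu$ vanishing near $0$, and similarly for a second factor; the product of two such is again of the form $t^{-(k+l)/2}\nu(\sqrt{t}\zeta)$, with $\nu$ still vanishing near $0$ and having homogeneous bounds of the required order (by Leibniz applied to $\mu$ and to the homogeneous-of-order-$k$ factor, noting that the support of $\nu$ is bounded away from the origin so the product of a bound of order $k$ and a bound of order $l$ is a bound of order $k+l$). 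This gives exactly the structure of $m_t^{k+l,k'+l'}$.

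The differentiation rules $\partial_{\xi,\eta} M^{k,k'} = M^{k-1,k'-1}$ and $\partial_{\xi,\eta} m_t^{k,k'} = m_t^{k-1,k'-1}$ are essentially tautological: the homogeneous-bound condition on a symbol of order $k$ asserts that its first derivative is bounded by $|(\xi_1,\dots,\xi_n)|^{k-1}$, and more generally higher derivatives inherit the corresponding bounds. For the $m_t^{k,k'}$ version, the low-frequency representation $t^{-k/2}\mu(\sqrt{t}\zeta)$ differentiates to $t^{-(k-1)/2}(\partial\mu)(\sqrt{t}\zeta)$, which is precisely the form required of $m_t^{k-1,k'-1}$, since $\partial\mu$ still vanishes near $0$ and has homogeneous bounds of order $k-1$.

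The main point requiring verification is the rule $\partial_t m_t^{k,k'} = t^{-1} m_t^{k,k'}$. At high frequencies the symbol is $t$-independent, so $\partial_t$ vanishes, which is trivially consistent with the right-hand side (the zero symbol lies in every class). At low frequencies, write $m_t^{k,k'}(\zeta) = t^{-k/2}\mu(\sqrt{t}\zeta)$ and compute
\begin{equation*}
\partial_t\bigl[t^{-k/2}\mu(\sqrt{t}\zeta)\bigr] = -\tfrac{k}{2} t^{-k/2-1}\mu(\sqrt{t}\zeta) + t^{-k/2}\cdot\tfrac{1}{2\sqrt{t}}\,(\sqrt{t}\zeta)\cdot(\nabla\mu)(\sqrt{t}\zeta) = \tfrac{1}{t}\,t^{-k/2}\tilde\mu(\sqrt{t}\zeta),
\end{equation*}
where $\tilde\mu(w) \overset{def}{=} -\tfrac{k}{2}\mu(w) + \tfrac{1}{2}\,w\cdot\nabla\mu(w)$. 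Since $\mu$ vanishes near $0$ and has homogeneous bounds of order $k$, the same properties hold for $\tilde\mu$ (the Euler-type term $w\cdot\nabla\mu$ has homogeneous bounds of order $k$ as well). Thus $t^{-k/2}\tilde\mu(\sqrt{t}\zeta)$ is itself a representative of the class $m_t^{k,k'}$, completing the proof. The only potential subtlety here is the careful bookkeeping that the Euler-type term $w\cdot\nabla\mu(w)$ retains homogeneous bounds of order $k$ (rather than $k+1$), which follows immediately from the bound $|\nabla\mu(w)| \lesssim |w|^{k-1}$.
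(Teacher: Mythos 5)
Your proof is correct and follows essentially the same route as the paper, which likewise treats only the $\partial_t$ rule as non-obvious and verifies it via the identity $\partial_t\bigl[t^{-k/2}\mu(\sqrt{t}\zeta)\bigr] = -\tfrac{k}{2}t^{-k/2-1}\mu(\sqrt{t}\zeta) + \tfrac12 t^{-(k+1)/2}\zeta\cdot\nabla\mu(\sqrt{t}\zeta)$. Note only a harmless typo in your intermediate display: the coefficient of $(\sqrt{t}\zeta)\cdot\nabla\mu(\sqrt{t}\zeta)$ should be $\tfrac{1}{2t}$ rather than $\tfrac{1}{2\sqrt{t}}$, consistent with your (correct) final expression $\tfrac{1}{t}t^{-k/2}\tilde\mu(\sqrt{t}\zeta)$.
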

\begin{proof} Only the last assertion is not obvious. It follows from the identity
$$
\partial_t t^{-k/2} \mu(\sqrt{t} (\xi_1,\dots,\xi_n)) = -\frac{k}{2} t^{-\frac{k}{2}-1} \mu( \sqrt{t} (\xi_1,\dots,\xi_n)) + \frac{1}{2} t^{-\frac{k+1}{2}} (\xi_1,\dots,\xi_n) \cdot \partial \mu( \sqrt{t} (\xi_1,\dots,\xi_n)) .
$$
\end{proof}

The following shows how to combine fractional integration and bilinear operators; this will be exploited in the following corollary to get actual estimates.

\begin{lemma}
\label{boundbilin} Given a symbol $m^{k,k'}_t$, if $ k' \leq K \leq k$, there exist ($t$-dependent) symbols $m_1 \dots m_n$ satisfying (uniformly in $t$) the Coifman-Meyer bounds~(\ref{butterfly}) such that
\begin{equation*}
T_{m^{k,k'}_t} (f_1,\dots,f_n) =  \sum_{i=1}^n T_{m_i}(f_1, \dots ,\Lambda_t^{K} f_i, \dots f_n ) .
\end{equation*}
\end{lemma}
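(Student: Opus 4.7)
The plan is to produce each $m_i$ by simply dividing $m^{k,k'}_t$ by $\lambda_t^K(\xi_i)$, after localising to the region where $|\xi_i|$ is comparable to $|(\xi_1,\dots,\xi_n)|$. Concretely, I would fix a smooth, $0$-homogeneous partition of unity $\chi_1+\dots+\chi_n\equiv 1$ away from the origin with $\mathrm{Supp}(\chi_i)\subset\{|\xi_i|\geq c\,|(\xi_1,\dots,\xi_n)|\}$ for some small fixed $c>0$, and set
\[
m_i(\xi_1,\dots,\xi_n)\;:=\;\chi_i(\xi_1,\dots,\xi_n)\,\frac{m^{k,k'}_t(\xi_1,\dots,\xi_n)}{\lambda_t^K(\xi_i)},
\]
where $\lambda_t^K(\xi):=\sqrt{t}^{\,-K}Z^{-K}(\sqrt{t}|\xi|)$ is the symbol of $\Lambda_t^K$ (well defined and smooth because $Z$ is smooth and strictly positive). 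By construction $\sum_{i=1}^n m_i(\xi)\,\lambda_t^K(\xi_i)=m^{k,k'}_t(\xi)$ on the support of $m^{k,k'}_t$, which Fourier-inverts to the claimed operator identity.

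The heart of the argument is then to verify the Coifman-Meyer bound \eqref{butterfly} for each $m_i$, uniformly in $t$. The basic observation is that on $\mathrm{Supp}(\chi_i\cdot m^{k,k'}_t)$ one has $|\xi_i|\sim|(\xi_1,\dots,\xi_n)|\gtrsim 1/\sqrt{t}$, because $m^{k,k'}_t$ vanishes for $|(\xi_1,\dots,\xi_n)|\lesssim 1/\sqrt{t}$; on that set $\lambda_t^K(\xi_i)\sim|\xi_i|^K$ with derivative bounds of corresponding order. One then splits into the two homogeneity regimes of $m^{k,k'}_t$. On the intermediate region $1/\sqrt{t}\lesssim|\xi|\lesssim 1$, $m^{k,k'}_t$ has homogeneous bounds of order $k$, so Leibniz yields
\[
|\partial^\alpha m_i|\;\lesssim\;|(\xi_1,\dots,\xi_n)|^{k-K-|\alpha|}\;\lesssim\;|(\xi_1,\dots,\xi_n)|^{-|\alpha|},
\]
the last step using $K\leq k$ and $|\xi|\lesssim 1$. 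On the high region $|\xi|\gtrsim 1$, $m^{k,k'}_t$ has bounds of order $k'$, giving $|\partial^\alpha m_i|\lesssim|\xi|^{k'-K-|\alpha|}\lesssim|\xi|^{-|\alpha|}$, now thanks to $K\geq k'$ and $|\xi|\gtrsim 1$.

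I expect the main obstacle to be bookkeeping across the two regimes: the hypotheses $K\leq k$ and $K\geq k'$ each cancel a different piece of $m^{k,k'}_t$ against $\lambda_t^K$, and one must confirm that the constants are genuinely $t$-independent, including across the smooth transitions at $|\xi|\sim 1$ (for $m^{k,k'}_t$) and at $|\xi_i|\sim 1/\sqrt{t}$ (for $\lambda_t^K$). The second transition is in fact avoided altogether by the vanishing of $m^{k,k'}_t$ on $|\xi|\lesssim 1/\sqrt{t}$, so the dangerous ``low plateau'' $\lambda_t^K\sim\sqrt{t}^{\,-K}$ of $\Lambda_t^K$ never occurs in the quotient; uniformity in $t$ then follows from the scale-invariant form of the bounds satisfied by $m^{k,k'}_t$ and $\lambda_t^K$ on each regime, combined with the fact that $\chi_i$ is itself $0$-homogeneous.
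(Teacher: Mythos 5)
Your proof is correct and follows essentially the same route as the paper's: a smooth, $0$-homogeneous partition localizing to the region where one frequency $\xi_i$ is dominant, division of the symbol there by the symbol of $\Lambda_t^{K}$ acting in $\xi_i$, and a two-regime verification of the Coifman--Meyer bounds using $K\leq k$ where $|(\xi_1,\dots,\xi_n)|\lesssim 1$ and $K\geq k'$ where $|(\xi_1,\dots,\xi_n)|\gtrsim 1$ (the paper writes this only for $n=2$ and divides by the comparable weight $(\tfrac{1}{t}+|\xi-\eta|^2)^{K/2}$ instead of the exact symbol). The only cosmetic imprecision is your claim that the low plateau of $\lambda_t^{K}$ is never met: since $|\xi_i|\geq c\,|(\xi_1,\dots,\xi_n)|\gtrsim c/\sqrt{t}$ one may still have $\sqrt{t}\,|\xi_i|\leq 1$ for small $c$, but on that band $\lambda_t^{K}(\xi_i)\sim t^{-K/2}\sim|\xi_i|^{K}$, so the stated bounds are unaffected.
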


\begin{proof} So as to make notations lighter, we only prove the Lemma in the bilinear case $n=2$.
Let $\chi_1, \chi_2$ be functions of $\xi$ and $\eta$, homogeneous of degree $0$ and $\mathcal{C}^\infty$ outside $(0,0)$, such that
\begin{equation}
\label{psi1psi2}
\begin{split}
& \chi_1(\xi,\eta) + \chi_2(\xi,\eta)  = 1   \quad   \mbox{for any $(\xi,\eta)$} \\
& \mbox{on $\operatorname{Supp} \chi_1$,}\;\;|\eta| \lesssim |\xi-\eta| \\
& \mbox{on $\operatorname{Supp} \chi_2$,}\;\;|\xi-\eta| \lesssim |\eta| .
\end{split}
\end{equation}
We decompose the symbol $m_t^{k,k'}$ as follows
$$
m^{k,k'}_t(\xi,\eta) = \chi_1 (\xi,\eta) m^{k,k'}_t(\xi,\eta) + \chi_2 (\xi,\eta) m^{k,k'}_t(\xi,\eta) \overset{def}{=} m_1(\xi,\eta) + m_2(\xi,\eta) .
$$
By symmetry, it suffices to treat the case $|\eta| \lesssim |\xi-\eta|$, which corresponds to the support of $\chi_1$, hence to $m_1$. Then it suffices to observe that the symbol
$$
\frac{m_1(\xi,\eta)}{(\frac{1}{t}+|\xi-\eta|^2)^{K/2}} = \chi_1(\xi,\eta) \frac{m_t^{k,k'}(\xi,\eta)}{(\frac{1}{t}+|\xi-\eta|^2)^{K/2}}
$$
satisfies the Coifman-Meyer bounds~(\ref{butterfly}) with constants which are independent of $t$. 
\end{proof}

It follows from the above lemma and the Coifman-Meyer theorem that if $\frac{1}{r} = \frac{1}{p_1} + \dots + \frac{1}{p_n}$,
$$
\left\| T_{m_t^{k,k'}}(f_1,\dots,f_n) \right\|_r \lesssim \sum_{i=1}^n \|f_1\|_{p_1} \dots \|\Lambda_t^K f_i\|_{p_i}\dots \|f_n\|_{p_n}.
$$
Using furthermore Lemma~\ref{boundlin} gives

\begin{coro}
\label{coro1}
Suppose that $\frac{1}{r} = \frac{1}{p_1} + \dots + \frac{1}{p_n}$ and that $k' \leq K \leq k$. Then for any number $L \leq 0$
$$
\left\| T_{m_t^{k,k'}}(f_1,\dots,f_n) \right\|_r \lesssim t^{-L/2} \sum_{i=1}^n \|f_1\|_{p_1} \dots \|\Lambda_t^{K-L} f_i\|_{p_i}\dots \|f_n\|_{p_n} .
$$
In particular, if $0\geq k \geq k'$ or $k \geq 0 \geq k'$,
$$
\left\| T_{m_t^{k,k'}}(f_1,\dots,f_n) \right\|_r \lesssim t^{-k^-/2} \|f_1\|_{p_1} \dots \|f_{n}\|_{p_n} ,
$$
where $k^-=\min(0,k)$.
\end{coro}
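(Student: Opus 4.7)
The plan is to chain together the two preceding ingredients with one elementary bookkeeping step. First, Lemma~\ref{boundbilin} lets me rewrite
$$
T_{m_t^{k,k'}}(f_1,\dots,f_n) \;=\; \sum_{i=1}^n T_{m_i}(f_1,\dots,\Lambda_t^{K} f_i,\dots,f_n)
$$
with symbols $m_i$ obeying the Coifman--Meyer bounds~(\ref{butterfly}) uniformly in $t$. Applying the Coifman--Meyer theorem to each $T_{m_i}$ under the exponent relation $\tfrac{1}{r}=\sum \tfrac{1}{p_i}$ then gives
$$
\|T_{m_t^{k,k'}}(f_1,\dots,f_n)\|_r \;\lesssim\; \sum_{i=1}^n \|f_1\|_{p_1}\cdots \|\Lambda_t^{K} f_i\|_{p_i} \cdots \|f_n\|_{p_n}.
$$

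To bring in the free parameter $L\le 0$, I factor the Fourier multiplier as $\Lambda_t^{K} = \Lambda_t^{L}\circ \Lambda_t^{K-L}$ (the underlying symbols simply multiply), then apply Lemma~\ref{boundlin}(i) with $\alpha=-L\ge 0$ and $p=q=p_i$, which yields
$$
\|\Lambda_t^{K} f_i\|_{p_i} \;=\; \|\Lambda_t^{L}(\Lambda_t^{K-L} f_i)\|_{p_i} \;\lesssim\; t^{-L/2}\,\|\Lambda_t^{K-L} f_i\|_{p_i}.
$$
Substituting this into the previous display gives the first inequality of the corollary.

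The ``in particular'' statement is then a matter of choosing $K$ and $L$ admissibly. If $0\ge k\ge k'$, I take $K=L=k$: then $K\in[k',k]$ and $L\le 0$ are both valid, $\Lambda_t^{K-L}=\Lambda_t^{0}=\mathrm{Id}$, and the prefactor is $t^{-L/2}=t^{-k/2}=t^{-k^-/2}$. If $k\ge 0\ge k'$, I take $K=L=0\in[k',k]$; again $\Lambda_t^{K-L}=\mathrm{Id}$, and the prefactor is $t^{0}=t^{-k^-/2}$ since $k^-=0$.

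There isn't really a substantive obstacle here: the corollary is a packaging of Lemma~\ref{boundbilin}, Coifman--Meyer, and Lemma~\ref{boundlin}(i). The only points requiring minor care are (a) that the composition identity $\Lambda_t^{L}\circ \Lambda_t^{K-L}=\Lambda_t^{K}$ is legitimate (it holds at the level of symbols from the definition of $\Lambda_t^{\alpha}$), and (b) that in the two specializations the choices of $K$ and $L$ satisfy the admissibility constraints $k'\le K\le k$ and $L\le 0$, which is why the two sign cases for $k$ must be treated separately.
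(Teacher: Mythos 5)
Your proposal is correct and follows exactly the route the paper intends: the paper itself only states that the corollary follows from Lemma~\ref{boundbilin} together with the Coifman--Meyer theorem (for the intermediate bound) and then Lemma~\ref{boundlin}, and you have simply filled in those steps, including the factorization $\Lambda_t^K = \Lambda_t^L \circ \Lambda_t^{K-L}$ and the admissible choices $K=L=k$ or $K=L=0$ for the two sign cases.
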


\subsection{Bounds for pseudo-product operators with flag singularities}

The trilinear operators that will occur in our investigations will exhibit the following kind of singularity.

\begin{defi} 
\label{fstd0}
The symbol $m$ is called of flag singularity type with degree $0$ if it can be written as
$$
m(\xi,\eta,\sigma) = m^{III}(\xi,\eta,\sigma) m^{II}_1(\eta,\xi) m^{II}_2(\eta,\sigma) 
$$
where
$$
\|m\|_{FS} \overset{def}{=} \|m^{III}\|_{CM} \|m^{II}_1\|_{CM} \|m^{II}_2\|_{CM} < \infty.
$$
(Notice that this is not the most general instance of a flag singularity, but it will be sufficient for our purposes).
\end{defi}

It is not a priori clear that the operator associated to such a flag-singularity symbol enjoys the same boundedness properties as a Coifman-Meyer operator. This is in sharp contrast with the bilinear situation, where a symbol with a flag singularity is easily analyzed.

Boundedness of pseudo-products with flag singularities is given by the following theorem. Note that an instance of a paraproduct with flag singularity has been analyzed in Muscalu~\cite{Muscalu}, who derived much more general estimates than the ones we are about to state ; however, the type of pseudo-products that we have to deal with does not fit into his framework, hence the need of the following theorem proved in Appendix~\ref{appendix}.

\begin{theorem}
\label{FS}
(i) Suppose $m$ is of flag singularity type with degree 0 (see the above definition). Then the operator
$$
T_m : L^p \times L^q \times L^r \rightarrow  L^s
$$
is bounded for 
$
\frac{1}{s} = \frac{1}{p} + \frac{1}{q} + \frac{1}{r} , \quad 1<p,q,r,s<\infty.
$
Furthermore, the bound is less than a multiple of $\|m\|_{FS}$.

(ii) If $m$ is zero for $|\eta,\sigma| >> |\xi|$, then the operators $T_m(P_0 \cdot , P_{<2}\cdot , \cdot )$ and $T_m(P_{<2} \cdot,P_0\cdot,\cdot)$ are bounded from $L^\infty \times L^\infty \times L^2$ to $L^2$.
\end{theorem}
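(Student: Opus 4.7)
The plan is to prove part (i) by performing a paraproduct-type decomposition of the two intermediate factors $m_1^{II}$ and $m_2^{II}$, reducing the trilinear operator to a sum of sequentially-composed bilinear Coifman-Meyer operators, and then reassembling via Littlewood-Paley square functions. The essential difficulty is that the flag-singularity symbol $m = m^{III}(\xi,\eta,\sigma)\, m_1^{II}(\eta,\xi)\, m_2^{II}(\eta,\sigma)$ is not in general Coifman-Meyer: the singularities of the three factors accumulate along affine subspaces of different dimensions, so naively bounding derivatives of $m$ by the total frequency fails. However each individual factor \emph{is} Coifman-Meyer in its own variables, which is what will be exploited.

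First I would decompose $m_2^{II}(\eta,\sigma)$ into standard paraproduct pieces according to which of $|\sigma|$, $|\eta-\sigma|$ (equivalently, $|\sigma|$ vs $|\eta|$) is dominant, so that in the low--high region $|\sigma| \ll |\eta|$ each piece factors as a tensor product of a Littlewood-Paley multiplier in $\eta$ with a low-frequency cutoff in $\sigma$, and symmetrically for the high--low and diagonal regions. Performing an analogous decomposition of $m_1^{II}(\eta,\xi)$ in the $(\xi,\xi-\eta)$ variables, $T_m$ becomes a sum over dyadic parameters of compositions of standard bilinear Coifman-Meyer operators, with the remaining factor $m^{III}(\xi,\eta,\sigma)$ restricted to each dyadic region and inheriting uniform Coifman-Meyer bounds there. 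Applying Theorem C-M twice (once to the inner bilinear operator, once to the outer) bounds each piece with constants independent of the dyadic indices, and then the Littlewood-Paley square function characterization of $L^p$ (Theorem~\ref{LP}) together with H\"older's inequality sums the pieces to yield the desired $L^p \times L^q \times L^r \to L^s$ bound, which crucially uses that all exponents are strictly between $1$ and $\infty$.

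For part (ii), after applying $P_0$ to $f_1$ and $P_{<2}$ to $f_2$, both $\sigma$ and $\eta-\sigma$ (hence $\eta$) are pinned to a bounded frequency range, and combined with the support hypothesis $m = 0$ for $|\eta,\sigma| \gg |\xi|$ this confines all interactions to a fixed-frequency slab. I would dualize in the output, use Plancherel to transfer the $L^2$ norm onto the $f_3$ variable, and absorb the two fixed-scale inputs via Bernstein's inequality at frequency zero, paying only their $L^\infty$ norms. What remains is a Fourier multiplier acting on $f_3$ whose symbol is the flag-singularity symbol restricted to the selected frequency region; this multiplier is $L^2$-bounded by the uniform Coifman-Meyer bounds on $m^{III}$, $m_1^{II}$, $m_2^{II}$ on that slab.

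The hardest step will be the bookkeeping in the paraproduct reduction for part (i): one must verify that after decomposing $m_1^{II}$ and $m_2^{II}$, the remaining factor $m^{III}(\xi,\eta,\sigma)$ admits Coifman-Meyer bounds on each dyadic piece with constants uniform in the scale parameters, so that the square-function reassembly converges. This is the step where the flag structure genuinely matters, and where simply black-boxing Theorem C-M, even in the spirit of Muscalu's paraproduct analysis, is not automatic; it is precisely the obstacle that prevents $m$ itself from being a Coifman-Meyer symbol and therefore requires the dedicated argument of this appendix.
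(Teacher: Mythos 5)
There is a genuine gap in your argument for part (i), and it sits exactly at the point you flag as ``the hardest step'' without resolving it. Your plan is to paraproduct-decompose $m^{II}_1$ and $m^{II}_2$ and then claim that $T_m$ becomes a sum of \emph{compositions} of bilinear Coifman--Meyer operators, with $m^{III}$ ``restricted to each dyadic region and inheriting uniform Coifman--Meyer bounds there.'' But a composition of two bilinear operators corresponds to a symbol of tensor-product form $a(\eta,\sigma)\,b(\eta,\xi)$; the factor $m^{III}(\xi,\eta,\sigma)$ genuinely couples $\sigma$ to $(\xi,\eta)$, and restricting it to a dyadic region does not make it factor. Its Coifman--Meyer bounds are not the issue (it has them globally by hypothesis); the issue is that on the bad region $|\xi|+|\eta|\leq \epsilon|\sigma|$ the singularity of $m^{II}_1(\eta,\xi)$ lives at a much smaller scale than the singularity of $m^{III}$, so the product is neither a Coifman--Meyer symbol nor a tensor product, and ``applying Theorem C-M twice'' has nothing to apply to. The paper's mechanism for decoupling is the Taylor expansion of $m^{III}$ around $(\eta,\xi)=0$ on that region, $m^{III}=\sum_{|\alpha|<M}\Phi_\alpha(\sigma)(\eta,\xi)^\alpha+R$, with the remainder bound~(\ref{boundR}) making $R\,m^{II}_1$ Coifman--Meyer for $M$ large; each Taylor term is then an honest tensor product $\Phi_\alpha(\sigma)\,(\xi,\eta)^\alpha m^{II}_1(\eta,\xi)$, and the factors $2^{i(j-k)}$ coming from $\Phi_\alpha(D)P_k$ and $\nabla^\alpha P_j$ supply the geometric decay needed to sum over the frequency separation. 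This idea is absent from your proposal, and without it the reduction you describe is not available.

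A second, smaller gap: even after the decoupling, not every model operator is a composition of bilinear Coifman--Meyer operators, so the strategy ``C-M twice, then square functions'' does not cover all cases. The configuration in which the two high frequencies of $f_1,f_2$ cancel to a low output that is then paired with a comparable frequency of $f_3$ (the operator~(\ref{kingfisher2}), $\sum_{k\geq j+97}P_{<j-1}(P_kf_1P_kf_2)P_jf_3$) is not such a composition because the inner sum over $k$ is constrained relative to $j$; it requires a direct argument with the square and maximal functions of Theorem~\ref{LP}, and for $|\alpha|>0$ one must prove estimates for~(\ref{beaver1})--(\ref{beaver3}) that are \emph{uniform} in the separation $J$ (using in addition the vector-valued maximal inequality) before summing the geometric series in $J$. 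Your sketch of part (ii) is closer to workable --- the frequency localizations kill the bad region and the paper itself only says the proof follows the steps of (i) --- but note that the ``remaining Fourier multiplier on $f_3$'' you invoke still depends jointly on $(\xi,\eta,\sigma)$, so some version of the paraproduct analysis is still needed there; one cannot literally reduce to a linear multiplier.
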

\begin{coro}
\label{coro2}
Suppose that $j \geq j'$, $k\geq k'$, $l \geq l'$, and that $j',k',l' \leq 0$.  
If furthermore $p$, $q$, $r$, $s$ satisfy the hypotheses of Theorem~\ref{FS}, then 
$$
\left\| T_{m^{j,j'}m^{k,k'}m^{l,l'}}(f_1,f_2,f_3) \right\|_s \lesssim t^{-\frac{j^-+k^-+l^-}{2}} \|f_1\|_{p_1} \|f_2\|_{p} \|f_{q}\|_{r} .
$$
\end{coro}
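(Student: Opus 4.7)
The plan is to reduce the statement to Theorem~\ref{FS}(i) by extracting, from each of the three factors of the flag-singularity symbol, a fractional-integration operator $\Lambda_t^{K_i}$ acting on one of the inputs, in the spirit of Lemma~\ref{boundbilin} and Corollary~\ref{coro1}. First, set $K_1=j^-$, $K_2=k^-$, $K_3=l^-$. Each $K_i\le 0$ lies in the admissible range $[k_i',k_i]$ required by Lemma~\ref{boundbilin}: one has $K_i=\min(0,k_i)\le k_i$ automatically, and $k_i'\le 0\le\max(0,-(-k_i))$ combined with $k_i'\le k_i$ yields $k_i'\le K_i$.

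Next, I would apply the decomposition argument of Lemma~\ref{boundbilin} separately to each of the three factors. For the trilinear factor $m^{j,j'}(\xi,\eta,\sigma)$ one partitions $(\xi,\eta,\sigma)$-space by which of the three input frequencies $\sigma$, $\eta-\sigma$, $\xi-\eta$ dominates; on each piece, dividing by $(1/t+|\zeta_a|^2)^{j^-/2}$, where $\zeta_a$ is the dominant input frequency, yields a Coifman-Meyer symbol uniformly in $t$ by the same computation as in the proof of Lemma~\ref{boundbilin}, while the divided-out factor is exactly $\Lambda_t^{j^-}$ applied to the $a$-th input. The two bilinear-type factors $m^{k,k'}(\eta,\xi)$ and $m^{l,l'}(\eta,\sigma)$ are handled in the same way. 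Combining the three decompositions produces a finite sum of expressions of the form
$$
T_{\tilde m}\bigl(\Lambda_t^{j^-}f_a,\;\Lambda_t^{k^-}f_b,\;\Lambda_t^{l^-}f_c\bigr),
$$
where $\tilde m$ is a product of three Coifman-Meyer symbols in the variable groupings $(\xi,\eta,\sigma)$, $(\eta,\xi)$, $(\eta,\sigma)$, i.e.\ a symbol of flag singularity type with degree $0$ in the sense of Definition~\ref{fstd0}. When two of $a,b,c$ coincide the corresponding fractional integrations on that input simply compose.

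To conclude, apply Theorem~\ref{FS}(i) to each summand, obtaining
$$
\|T_{\tilde m}(\Lambda_t^{j^-}f_a,\Lambda_t^{k^-}f_b,\Lambda_t^{l^-}f_c)\|_s\lesssim \|\Lambda_t^{j^-}f_a\|_{p_a}\|\Lambda_t^{k^-}f_b\|_{p_b}\|\Lambda_t^{l^-}f_c\|_{p_c},
$$
and then invoke Lemma~\ref{boundlin}(i) with $q=p$: since each $K_i\le 0$ and $1<p_i<\infty$, one has $\|\Lambda_t^{K_i}f_i\|_{p_i}\lesssim t^{-K_i/2}\|f_i\|_{p_i}$ (trivially with constant $1$ when $K_i=0$). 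Multiplying the three bounds produces the desired factor $t^{-(j^-+k^-+l^-)/2}$.

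The main obstacle is verifying that the three extractions interact cleanly---that dividing each flag factor by a symbol depending only on a single input frequency preserves the flag-singularity structure of the remaining $\tilde m$, so that Theorem~\ref{FS}(i) applies with constants independent of $t$. As in the proof of Lemma~\ref{boundbilin}, this reduces to checking uniform Coifman-Meyer bounds for each of the three surviving factors in their own variable groupings, the key point being that the denominator $(1/t+|\zeta_a|^2)^{K_i/2}$, when restricted to the region where $\zeta_a$ dominates, is itself a Coifman-Meyer symbol whose $t$-dependence exactly cancels the low-frequency growth of $m^{k_i,k_i'}_t$ down to the cutoff scale $1/\sqrt t$.
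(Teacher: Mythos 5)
There is a genuine gap in the extraction step, and it is localized at the middle factor $m^{k,k'}(\eta,\xi)$. Your scheme transfers the low‑frequency singularity of each factor onto one of the three \emph{input} frequencies $\sigma$, $\eta-\sigma$, $\xi-\eta$ by dividing by $(\tfrac1t+|\zeta_a|^2)^{K/2}$. This is legitimate for $m^{j,j'}(\xi,\eta,\sigma)$ and for $m^{l,l'}(\eta,\sigma)$, whose singular sets, $\{(\xi,\eta,\sigma)=0\}$ and $\{\sigma=\eta-\sigma=0\}$, are bounded below by the largest input frequency. But the singular set of $m^{k,k'}(\eta,\xi)$ is $\{\eta=\xi=0\}$, and $\eta=\sigma+(\eta-\sigma)$ is the \emph{output} of the inner pairing, not an input. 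Take the region where $|\sigma|\sim|\eta-\sigma|\sim 1$ with $\sigma$ and $\sigma-\eta$ nearly cancelling, so that $|\eta|,|\xi|\sim t^{-1/2}$: there $m^{k,k'}_t(\eta,\xi)$ has size $t^{-k/2}$ (unbounded as $t\to\infty$ when $k<0$), while the dominant input frequency is of order $1$, so dividing by $(\tfrac1t+|\zeta_a|^2)^{k^-/2}\sim 1$ gains nothing and the quotient symbol $\tilde m$ is unbounded, hence not of flag singularity type uniformly in $t$. The remaining input $\xi-\eta$ does not save you either, since in the sub‑region $|\xi-\eta|\ll|\eta|$ the quotient by $(\tfrac1t+|\xi-\eta|^2)^{k^-/2}$ violates the Coifman--Meyer derivative bounds relative to $|\xi|+|\eta|$. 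So the asserted decomposition into terms $T_{\tilde m}(\Lambda_t^{j^-}f_a,\Lambda_t^{k^-}f_b,\Lambda_t^{l^-}f_c)$ does not exist; the mechanism of Lemma~\ref{boundbilin} does not carry over to a flag factor whose variables are not all input frequencies.

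The corollary is nonetheless true, and the repair shows the $\Lambda_t$ detour is unnecessary: one checks that each of $t^{j^-/2}m^{j,j'}_t$, $t^{k^-/2}m^{k,k'}_t$, $t^{l^-/2}m^{l,l'}_t$ satisfies the Coifman--Meyer bounds~(\ref{butterfly}) uniformly in $t$ in its own variable grouping. Indeed, for $|\zeta|\le 2$ one has $t^{k^-/2}m^{k,k'}_t(\zeta)=t^{(k^--k)/2}\mu(\sqrt t\,\zeta)$ and
$$
\bigl|\partial^\alpha_\zeta\bigl[t^{(k^--k)/2}\mu(\sqrt t\,\zeta)\bigr]\bigr|\lesssim t^{k^-/2}|\zeta|^{k-|\alpha|}=(\sqrt t\,|\zeta|)^{k^-}\,|\zeta|^{k-k^-}\,|\zeta|^{-|\alpha|}\lesssim|\zeta|^{-|\alpha|},
$$
using that $\sqrt t\,|\zeta|\gtrsim1$ on the support of $\mu(\sqrt t\,\cdot)$, $k^-\le0$, $k\ge k^-$ and $|\zeta|\le2$; for $|\zeta|\ge2$ the symbol is $t$‑independent of order $k'\le0$ and $t^{k^-/2}\le1$. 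Hence $t^{(j^-+k^-+l^-)/2}\,m^{j,j'}m^{k,k'}m^{l,l'}$ is of flag singularity type with $\|\cdot\|_{FS}$ bounded uniformly in $t$, and Theorem~\ref{FS}(i) applied to this rescaled symbol gives the stated estimate in one stroke. (Your verification that $K_i\in[k_i',k_i]$ and the concluding use of Lemma~\ref{boundlin}(i) are correct as far as they go; they are simply not needed once the rescaling observation is made.)
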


\section{Estimates on $g$}

\label{sectiong}

Recall that 
\begin{equation*}
 \hat{g}(t,\xi) = \left.\int\left( \alpha \frac{q(\xi,\eta)}{\varphi_{++}} \chi_s^{++,T}(\xi,\eta) e^{is\varphi_{++}}
 + \beta  \frac{q(\xi,\eta)}{\varphi_{--}} \chi_s^{--,T}(\xi,\eta) e^{is\varphi_{--}}\right)
  \hat{f}(s, \xi - \eta) \hat{f}(s,\eta) d\eta \right]_2^t 
\end{equation*}
Since the part corresponding to $s=2$ is very easy to estimate,  in the following we write 
$$
\hat{g}(t,\xi) = \int m^{-1,-2}_t e^{it \varphi(\xi,\eta)} \hat{f}(\xi - \eta) \hat{f}(\eta) d\eta,
$$
where $\varphi$ is either $\varphi_{++}$ or $\varphi_{--}$.

\subsection{Control of $g$ in $L^2$}

It follows from the above formula, Corollary~\ref{coro1} and Lemma~\ref{boundlin} that
\begin{equation*}
\begin{split}
\|g\|_2 & = \|e^{it\Delta} T_{m^{-1,-2}_t} (e^{-it\Delta} f, e^{-it\Delta} f) \|_2  \lesssim \| \Lambda_t^{-1} e^{-it\Delta} f \|_4 \| e^{-it\Delta} f \|_4 \\
& \lesssim \| f \|_{4/3} \| e^{-it\Delta} f \|_4 
 \lesssim \| \langle x \rangle f \|_2 \| e^{-it\Delta} f \|_4 \lesssim \frac{\eps^2}{\sqrt{t}}.
\end{split}
\end{equation*}

\subsection{Control of $xg$ in $L^2$}

Applying $\partial_\xi$ to $\widehat{g}(\xi)$ 
$$
\partial_\xi \widehat{g}(\xi) = \partial_\xi \int m^{-1,-2}_t  e^{it \varphi(\xi,\eta)} \hat{f}(\xi - \eta) \hat{f}(\eta) d\eta
$$
yields, by Proposition~\ref{calcrules}, terms of the following types
\begin{subequations}
\begin{align}
&\label{xg-1} \int t m^{0,-1}_t e^{it\varphi} \hat{f}(\eta) \hat{f}(\xi-\eta) d\eta \\
&\label{xg-2} \int m^{-2,-3}_t e^{it\varphi} \hat{f}(\eta) \hat{f}(\xi-\eta) d\eta \\
&\label{xg-3} \int {m^{-1,-2}_t} e^{it\varphi} \hat{f}(\eta) \partial_\xi \hat{f}(\xi-\eta) d\eta.
\end{align}
\end{subequations}
By Corollary~\ref{coro1},
\begin{equation*}
\begin{split}
\|(\ref{xg-1})\|_2 & = t \|e^{it\Delta} T_{m^{0,-1}_t} (e^{-it\Delta} f, e^{-it\Delta} f) \|_2 
 \lesssim t \| f\|_{2} \|e^{-it\Delta} f\|_{\infty} \lesssim \eps^2 .\\
\|(\ref{xg-2})\|_2 & = \|e^{it\Delta} T_{m^{-2,-3}_t} (e^{-it\Delta} f, e^{-it\Delta} f) \|_2 
 \lesssim t \|f\|_2 \|e^{-it\Delta} f\|_{\infty} \lesssim \eps^2 .
\end{split}
\end{equation*}
The last term,~(\ref{xg-3}), is estimated in a very similar way, so we skip it.

\subsection{Control of $x^2 g$ in $L^2$}

Applying $\partial_\xi^2$ to $\widehat{g}(\xi)$
$$
\partial_\xi^2 \widehat{g}(\xi) = \partial_\xi^2 \int m^{-1,-2}_t  e^{it \varphi(\xi,\eta)} \hat{f}(\xi - \eta) \hat{f}(\eta) d\eta
$$
yields, by Proposition~\ref{calcrules}, terms of the type
\begin{subequations}
\begin{align}
&\label{xxg-1} \int t^2 m^{1,0}_t e^{it\varphi} \hat{f}(\eta) \hat{f}(\xi-\eta) d\eta \\
&\label{xxg-2} \int t m^{-1,-2}_t e^{it\varphi} \hat{f}(\eta) \hat{f}(\xi-\eta) d\eta \\
&\label{xxg-3} \int m^{-3,-4}_t e^{it\varphi} \hat{f}(\eta) \hat{f}(\xi-\eta) d\eta \\
&\label{xxg-4} \int t m^{0,-1}_t e^{it\varphi} \hat{f}(\eta) \partial_\xi \hat{f}(\xi-\eta) d\eta \\
&\label{xxg-5} \int m^{-2,-3}_t e^{it\varphi} \hat{f}(\eta) \partial_\xi \hat{f}(\xi-\eta) d\eta \\
&\label{xxg-6} \int m^{-1,-2}_t e^{it\varphi} \hat{f}(\eta) \partial_\xi^2 \hat{f}(\xi-\eta) d\eta.
\end{align}
\end{subequations}

The first term is the one which gives a growth of $t$: Corollary~\ref{coro1} gives
\begin{equation*}
\|(\ref{xxg-1})\|_2  = t^2 \|e^{it\Delta} T_{m^{1,0}_t} (e^{-it\Delta} f, e^{-it\Delta} f) \|_2 
 \lesssim t^2 \|f\|_2 \|e^{-it\Delta} f\|_\infty 
 \lesssim  \eps^2  t^2 \frac{1}{t} \lesssim \eps^2   t .
\end{equation*}
The other terms are lower order, and can be controlled with the help of Corollary~\ref{coro1}. For instance,
\begin{equation*}
\|(\ref{xxg-3})\|_2  = \|e^{it\Delta} T_{m^{-3,-4}_t} (e^{-it\Delta} f, e^{-it\Delta} f) \|_2 
\lesssim t^{3/2} \|e^{-it\Delta} f\|_\infty \|f\|_2 
\lesssim t^{3/2} \eps^2  \frac{1}{t} \lesssim  \eps^2 \sqrt{t}.
\end{equation*}
The estimates for terms~(\ref{xxg-2})~(\ref{xxg-4})~(\ref{xxg-5}) follow in a similar manner.
Still with the help of Corollary~\ref{coro1}, one obtains the bound for the term~(\ref{xxg-6}):
\begin{equation*}
\|(\ref{xxg-6})\|_2  = \|e^{it\Delta} T_{m^{-1,-2}_t} (e^{-it\Delta} f, e^{-it\Delta} x^2 f) \|_2 
 \lesssim t^{1/2} \|e^{-it\Delta} f\|_\infty  \|x^2 f\|_2 \lesssim  \eps^2 \sqrt{t}.
\end{equation*}

\subsection{Control of $e^{-it\Delta} g$ in $L^\infty$}

Notice first that
$$
e^{it|\xi|^2} \widehat{g}(\xi) = \int m^{-1,-2}_t e^{it|\eta|^2} \widehat{f}(\eta) e^{it|\xi - \eta|^2} \widehat{f}(\xi - \eta) \,d\eta.
$$
Write the above using a rudimentary paraproduct decomposition
$$
\sum_j 2^{-j} \inf (1,2^{-j}) T_{m^{0,0}} \left( P_{<j} e^{-it\Delta} f , P_j e^{-it\Delta} f \right) + \sum_j 2^{-j} \inf (1,2^{-j}) T_{m^{0,0}} \left( P_j e^{-it\Delta} f , P_{\leq j} e^{-it\Delta} f \right)
$$
(where, as usual, $m^{0,0}$ stands for different symbols all belonging to the class defined in Section~\ref{multilinear}).
We only show how to deal with the first summand, the second one can of course be treated in the same way.
To bound it in $L^\infty$, we use repetitively Bernstein's inequality~(\ref{lemmadeltaj}) as follows
\begin{equation*}
\begin{split}
& \left\|  \sum_{j\leq 0} 2^{-j} T_{m^{0,0}}\left( P_{<j} e^{-it\Delta} f , P_j e^{-it\Delta} f \right) + \sum_{j>0} 2^{-2j} T_{m^{0,0}} \left( P_{<j} e^{-it\Delta} f , P_j e^{-it\Delta} f \right) \right\|_\infty \\
&\;\;\;\;\;\;\; \lesssim \sum_{j\leq 0} 2^{-j} 2^{j} \left\| T_{m^{0,0}}\left( P_{<j} e^{-it\Delta} f , P_j e^{-it\Delta} f \right)\right\|_2 + \sum_{j>0} 2^{-2j} 2^{j} \left\| T_{m^{0,0}} \left( P_{<j} e^{-it\Delta} f, P_j e^{-it\Delta} f \right) \right\|_2 \\
&\;\;\;\;\;\;\; \lesssim \sum_{j\leq 0} \left\| P_{<j} e^{-it\Delta} f \right\|_\infty \left\| P_j e^{-it\Delta} f \right\|_2 + \sum_{j>0} 2^{-j} \left\| P_{<j} e^{-it\Delta} f \right\|_\infty  \left\| P_j e^{-it\Delta} f \right\|_2 \\
&\;\;\;\;\;\;\; \lesssim \sum_{j\leq 0} 2^{j/2} \left\| e^{-it\Delta} f \right\|_\infty \left\| P_j  f \right\|_{4/3} + \sum_{j>0} 2^{-j} \left\| e^{-it\Delta} f \right\|_\infty  \left\|  e^{-it\Delta} f \right\|_2 \\
&\;\;\;\;\;\;\; \lesssim \sum_{j\leq 0} 2^{j/2} \frac{1}{t} \eps  \|\langle x \rangle f\|_{2} + \sum_{j>0} 2^{-j} \frac{\eps^2}{t}  \lesssim \frac{\eps^2}{t}.
\end{split}
\end{equation*}

\section{Estimates on $h_1$}

\label{esth1}

We observe that $h_1$ terms can all be written as
$$
\int_2^t \int \frac{1}{\sqrt{s}} m(\sqrt{s}(\eta,\xi)) \widehat{f}(\eta) \widehat{f}(\xi-\eta) \,d\eta\,ds,
$$
where $m$ is a smooth function with compact support.

\subsection{Control of $h_1$ in $L^2$}


The estimate follows naturally by the theorem of Coifman-Meyer:
\begin{equation*}
\begin{split}
\|h_1\|_2 & = \left\| \int_2^t \!\!\int  \frac{1}{\sqrt{s}} m(\sqrt{s}(\eta,\xi)) \hat{f}(s,\eta) \hat{f}(s, \xi - \eta) \, d\eta \,  ds \right\|_2 \\
& = \left\| \int_2^t \!\!\int  \frac{1}{\sqrt{s}} m(\sqrt{s}(\eta,\xi)) e^{-is\eta^2} e^{  is\eta^2} \hat{f}(s,\eta)\hat{f}(s, \xi - \eta)\,  d\eta \,  ds \right\|_2 \\
& \leq \int_2^t \frac{1}{\sqrt{s}} \left\|  T_{m(\sqrt{s}(\xi,\eta)e^{-is\eta^2}} \left( e^{  -is\Delta} f , f \right) \right\|_2 ds \\
& \lesssim \int_2^t \frac{1}{\sqrt{s}}  \| e^{  -is\Delta} f \|_\infty \|f\|_2 \,ds 
 \lesssim \int_2^t \frac{ \eps  }{\sqrt{s}} \frac{ \eps  }{s}\,ds \lesssim  \eps^2  .
\end{split}
\end{equation*}

\subsection{Control of $x h_1$ in $L^2$}


Applying $\partial_\xi$ to $\widehat{h}_1(\xi)$, terms of the following types appear 
\begin{subequations}
\begin{align}
& \label{xh1-1} \int_2^t \!\! \int m \left( \sqrt{s}(\eta,\xi) \right)   \hat{f}(s,\eta) \hat{f}(s, \xi - \eta)d\eta   ds \\
& \label{xh1-2} \int_2^t \!\!  \int \frac{1}{\sqrt{s}}m \left( \sqrt{s}(\eta,\xi) \right)    \hat{f}(s,\eta) \partial_\xi \hat{f}(s, \xi - \eta)d\eta   ds ,
\end{align}
\end{subequations}
where $m$ stands for a smooth compactly supported function. Terms of type~(\ref{xh1-2}) can be estimated precisely as above.

In order to treat the other kind of terms, observe that Bernstein's inequality~(\ref{lemmadeltaj}) gives
\begin{equation}
\label{addition}
\mbox{if $1<p<2$, }\;\;\left\|P_{<-\frac{1}{2}\log_2(t)-C} f \right\|_2 \lesssim_p t^{\frac{1}{2}-\frac{1}{p}} \|f\|_p \lesssim t^{\frac{1}{2}-\frac{1}{p}} \|\langle x \rangle f\|_2 \lesssim t^{\frac{1}{2}-\frac{1}{p}} \epsilon .
\end{equation}
Therefore, picking some $p$ between $1$ and $2$, 
\begin{equation*}
\begin{split}
\|(\ref{xh1-1})\|_2 & = \left\| \int_2^t \!\! \int m \left( \sqrt{s}(\eta,\xi) \right) \hat{f}(s, \xi - \eta)  \hat{f}(s,\eta) d\eta   ds \right\|_2 \\
& = \left\| \int_2^t \!\! \int m \left( \sqrt{s}(\eta,\xi) \right) e^{  is\eta^2}e^{- is\eta^2}  \hat{f}(s,\eta) \hat{f}(s, \xi - \eta)d\eta   ds \right\|_2 \\
& \lesssim \int_2^t \left\||  T_{m(\sqrt{s}(\xi,\eta))e^{  is\eta^2}} \left( e^{-is\Delta} f , P_{<-\frac{1}{2}\log_2(s)-C}f  \right) \right\|_2 ds \\
& \lesssim \int_2^t \|e^{  -is\Delta} f\|_\infty \|P_{<-\frac{1}{2}\log_2(s)-C} f \|_2  ds 
 \lesssim \epsilon^2 \int_2^t s^{\frac{1}{2}-\frac{1}{p}} s^{-1} ds \lesssim \epsilon^2 .
\end{split}
\end{equation*}

\subsection{Control of $x^2 h_1$ in $L^2$}

Applying $\partial_\xi^2$ to $\widehat{h_1}$ yields terms of the type 
\begin{subequations}
\begin{align}
& \label{xxh1-1} \int_2^t \!\! \int \sqrt{s} m \left( \sqrt{s}(\eta,\xi) \right)  \hat{f}(s,\eta)\hat{f}(s, \xi - \eta)  d\eta   ds \\
& \label{xxh1-2} \int_2^t \!\! \int m \left( \sqrt{s}(\eta,\xi) \right) \hat{f}(s,\eta) \partial_\xi\hat{f}(s, \xi - \eta)   d\eta   ds \\
& \label{xxh1-3} \int_2^t \!\!  \int \frac{1}{\sqrt{s}}m \left( \sqrt{s}(\eta,\xi) \right) \hat{f}(s,\eta) \partial_\xi^2 \hat{f}(s, \xi - \eta)   d\eta   ds ,
\end{align}
\end{subequations}
where $m$ stands for a smooth compactly supported function. Terms of the type~(\ref{xxh1-2}) or~(\ref{xxh1-1}) can be treated as in the previous paragraphs ; as for the last type of terms, the estimate is straightforward
\begin{equation*}
\begin{split}
\|(\ref{xxh1-3})\|_2 & = \left\| \int_2^t \!\!  \int \frac{1}{\sqrt{s}}m \left( \sqrt{s}(\eta,\xi) \right)   \hat{f}(s,\eta) \partial_\xi^2 \hat{f}(s, \xi - \eta)   d\eta   ds \right\|_2 \\
 & \lesssim \int_2^t \frac{1}{\sqrt{s}}  \left\| e^{  -is\Delta} f  \right\|_\infty \left\|  x^2 f \right\|_2 ds 
  \lesssim \int_2^t   \eps^2  \frac{1}{\sqrt{s}} s \frac{1}{s} ds \lesssim  \eps^2  \sqrt{t} .
\end{split}
\end{equation*}

\subsection{Control of $e^{  -is\Delta} h_1$ in $L^\infty$}

In order to show $\left\| e^{  -is\Delta} h_1 \right\|_\infty \lesssim \frac{ \eps^2  }{{t}}$, it suffices to prove that $\|h_1\|_1 \lesssim   \eps^2  $. 
In general, the oscillating phases are a hindrance to obtaining $L^1$ estimates, but due to the shrinking support of $m(\sqrt{s} \cdot)$, oscillations do not occur here. Therefore, using~(\ref{addition}),
\begin{equation}
\begin{split}
\|h_1\|_{1} & = \left\| \int_2^t  \frac{1}{\sqrt{s}} T_{m(\sqrt{s}(\eta,\xi))} ( f , f)   ds \right\|_1 \\
& = \left\|  \int_2^t  \frac{1}{\sqrt{s}} T_{m(\sqrt{s}(\eta,\xi))} \left( P_{<-\frac{1}{2}\log_2(s) - C}  f , P_{<-\frac{1}{2}\log_2(s) - C}  f \right) \,ds \right\|_1 \\
& \lesssim \int_2^t \frac{1}{\sqrt{s}} \left\|  P_{<-\frac{1}{2}\log_2(s)-C} f\right\|_2 \left\|  P_{<-\frac{1}{2}\log_2(s)-C} f \right\|_2  ds  \\
& \lesssim \int_2^t \frac{1}{\sqrt{s}} \left( s^{-\frac{3}{8}} \| f \|_{7/8} \right)^2  ds 
 \lesssim \int_2^t  \eps^2 \frac{1}{\sqrt{s}} s^{-\frac{3}{8}} s^{-\frac{3}{8}} ds  \lesssim \eps^2 .
\end{split}
\end{equation}

\section{Estimates on $h_2(h,h)$}

\label{h2hh}

As explained in Section \ref{outline},   we shall in the present section derive estimates on $h_2(h,h)$ and $h_2(f,g)$ seperately.
Since $\partial_\eta \varphi_{++}$ does not vanish on the support of $\chi^S_s$, the idea in order to estimate $h_2$ will always be to integrate by parts in $\eta$ using the identity
$$
\frac{1}{i s (\partial_{\eta} \varphi_{++})^2} \partial_{\eta} \varphi_{++} \cdot \partial_\eta e^{is\varphi_{++}} = e^{is\varphi_{++}} .
$$
that we write symbolically
\begin{equation}
\label{ibp}
\frac{1}{s} M^{-1,-1} \partial_\eta e^{is\varphi_{++}} = e^{is\varphi_{++}} .
\end{equation}

\subsection{Control of $h_2(h,h)$ in $L^2$}

\label{h2hhl2}

As far as the $L^2$ estimate is concerned, it is not necessary to distinguish between $h_2(h,h)$ and $h_2(f,g)$.
Making use of the formula~(\ref{ibp}), one gets
\begin{subequations}
\begin{align}
\widehat{h}_2(\xi) & = \int_2^t \!\!\int m^{1,0}_s e^{is\varphi_{++}(\xi,\eta)} \hat{f}(\xi - \eta) \hat{f}(\eta) d\eta ds 
= \int_2^t \!\!\int m^{1,0}_s \frac{1}{s} M^{-1,-1} \partial_\eta e^{is\varphi_{++}} \hat{f}(\xi - \eta) \hat{f}(\eta) d\eta ds
\nonumber \\
& \label{h2-1} = - \int_2^t \!\!\int \frac{1}{s} m^{-1,-2}_s e^{is\varphi_{++}} \hat{f}(\xi - \eta) \hat{f}(\eta) d\eta ds \\
& \label{h2-2} \;\;\;\;- \int_2^t \!\!\int \frac{1}{s} m^{0,-1}_s e^{is\varphi_{++}} \hat{f}(\xi - \eta) \partial_\eta \hat{f}(\eta) d\eta ds \;\;\;\;\mbox{ + \{ similar term \}}.
\end{align}
\end{subequations}
Applying Corollary~\ref{coro1} gives the estimates
\begin{equation*}
\begin{split} 
\|(\ref{h2-1})\|_2 & \lesssim \int_2^t \frac{1}{s} \left\| e^{is\Delta} T_{m^{-1,-2}_s} ( e^{  -is\Delta} f , e^{  -is\Delta} f ) \right\|_2 ds \\
& \lesssim \int_2^t \frac{1}{s} \sqrt{s} \| e^{  -is\Delta} f\|_{2} \|e^{  -is\Delta}f\|_{\infty} ds 
 \lesssim \int_2^t \eps^2   \frac{1}{s} \sqrt{s} \frac{1}{s} ds \lesssim    \eps^2 .\\[1em]
\|(\ref{h2-2})\|_2 & \lesssim \int_2^t \frac{1}{s} \left\| e^{is\Delta} T_{m^{0,-1}_s} ( e^{  -is\Delta} f , e^{  -is\Delta} x f ) \right\|_2 ds \\
& \lesssim \int_2^t \frac{1}{s} \|xf\|_2 \|e^{  -is\Delta} f\|_\infty ds 
 \lesssim \int_2^t  \eps^2  \frac{1}{s} \frac{1}{s} ds \lesssim  \eps^2  .
\end{split}
\end{equation*}

\subsection{Control of $x h_2(h,h)$ in $L^2$} 

\label{controlxh2}

Applying $\partial_\xi$ to $\widehat{h}_2(h,h)(\xi)$ yields
\begin{subequations}
\begin{align}
\partial_\xi \widehat{h}_2(\xi) = & \label{turtle1} \int_2^t \!\!\int m^{0,-1}_s e^{is\varphi_{++}(\xi,\eta)} \hat{h}(\xi - \eta) \hat{h}(\eta) d\eta ds \\
& \label{turtle2} + \int_2^t \!\!\int s m^{2,1}_s e^{is\varphi_{++}(\xi,\eta)} \hat{h}(\xi - \eta) \hat{h}(\eta) d\eta ds \\
& \label{turtle3} + \int_2^t \!\!\int m^{1,0}_s e^{is\varphi_{++}(\xi,\eta)} \hat{h}(\xi - \eta) \partial_\xi \hat{h}(\eta) d\eta ds .
\end{align}
\end{subequations}
Using~(\ref{ibp}) to integrate by parts, twice for~(\ref{turtle2}), and once for~(\ref{turtle1}) and~(\ref{turtle3}), we see that the above expressions are transformed into terms of the following types
\begin{subequations}
\begin{align}
& \label{xh2-1}\int_2^t \!\!\int \frac{1}{s} m^{-2,-3}_s e^{is\varphi_{++}(\xi,\eta)} \hat{h}(\xi - \eta) \hat{h}(\eta) d\eta \, ds \\
& \label{xh2-2} \int_2^t \!\!\int \frac{1}{s} m^{-1,-2}_s e^{is\varphi_{++}(\xi,\eta)} \hat{h}(\xi - \eta) \partial_\eta \hat{h}(\eta) d\eta \,ds \\
& \label{xh2-3} \int_2^t \!\!\int \frac{1}{s} m_s^{0,-1} e^{is\varphi_{++}(\xi,\eta)} \hat{h}(\xi - \eta) \partial_\eta^2 \hat{h}(\eta) d\eta \,ds \\
& \label{xh2-4} \int_2^t \!\!\int \frac{1}{s} m_s^{0,-1} e^{is\varphi_{++}(\xi,\eta)} \partial_\eta \hat{h}(\xi - \eta) \partial_\eta \hat{h}(\eta) d\eta \,ds 
\end{align}
\end{subequations}
Let us begin with~(\ref{xh2-1}). By Corollary~\ref{coro1},
\begin{equation*}
\begin{split} 
\|(\ref{xh2-1})\|_2 & \lesssim \int_2^t \frac{1}{s} s^{3/4} \|\Lambda_s^{-1/2} h\|_{2} \|e^{  -is\Delta} h\|_{\infty} ds \\
& \lesssim \int_2^t \frac{1}{s} s^{3/4} \|  h \|_{4/3} \|e^{-is\Delta} h\|_{\infty} ds 
 \lesssim \int_2^t s^{-1/4} \eps^2  \frac{1}{s} ds \lesssim \eps^2   .
\end{split}
\end{equation*}
For (\ref{xh2-2}), we have 
\begin{equation*}
\begin{split} 
\|(\ref{xh2-2})\|_2 & \lesssim \int_2^t \frac{1}{s} \left\| e^{is\Delta} T_{m^{-1,-2}_s} ( e^{  -is\Delta} h , e^{  -is\Delta} x h ) \right\|_2 ds \\
& \lesssim \int_2^t \frac{1}{s} s^{1/2} \|x h\|_{2}\|e^{  -is\Delta} h\|_{\infty}ds 
 \lesssim \int_2^t  \eps^2 \frac{1}{s} s^{1/2} \frac{1}{s} ds \lesssim \eps^2   .
\end{split}
\end{equation*}
Finally, estimating the term~(\ref{xh2-3}) only requires Corollary~\ref{coro1}: 
\begin{equation*}
\begin{split} 
\|(\ref{xh2-3})\|_2 & \lesssim \int_2^t \frac{1}{s} \left\| e^{is\Delta} T_{m^{0,-1}_s} ( e^{  -is\Delta} h , e^{  -is\Delta} x^2 h ) \right\|_2 ds \\
& \lesssim \int_2^t \frac{1}{s} \|x^2 h\|_2 \|e^{  -is\Delta}h\|_{\infty}ds 
 \lesssim \int_2^t  \eps^2 \frac{1}{s} s^{5/8} \frac{1}{s} ds \lesssim  \eps^2  .
\end{split}
\end{equation*}
The estimate of the term~(\ref{xh2-4}) reduces to the previous one with the help of Lemma~\ref{gagnir}.

\subsection{Control of $x^2 h_2(h,h)$ in $L^2$}

\label{blue}

Applying $\partial_\xi^2$ to $\widehat{h}_2(\xi)$
\begin{equation*}
\partial_\xi^2 \widehat{h}_2(\xi) = \partial_\xi^2 \int_2^t \!\!\int m^{1,0}_s e^{is\varphi_{++}(\xi,\eta)} \hat{h}(\xi - \eta) \hat{h}(\eta) d\eta ds
\end{equation*}
yields terms of the type
\begin{equation*}
\begin{split}
& \int_2^t \!\!\int \left( m^{-1,-2}_s +s m^{1,0}_s +s^2 m^{3,2}_s \right) e^{is\varphi_{++}(\xi,\eta)} \hat{h}(\xi - \eta) \hat{h}(\eta) d\eta ds \\
&  \int_2^t \!\!\int \left( m^{0,-1}_s +sm^{2,1}_s \right) e^{is\varphi_{++}(\xi,\eta)} \partial_\xi \hat{h}(\xi - \eta) \partial_\eta \hat{h}(\eta) d\eta ds \\
&  \int_2^t \!\!\int m^{1,0}_s e^{is\varphi_{++}(\xi,\eta)} \hat{h}(\xi - \eta) \partial_\eta^2 \hat{h}(\eta) d\eta ds .
\end{split}
\end{equation*}
Using~(\ref{ibp}), integrate by parts the above terms, twice if they contain a factor $s^2$, and once if they contain a factor $s$. Matters then reduce to estimating terms of the following types
\begin{subequations}
\begin{align}
& \label{xxh2-1} \int_2^t \!\!\int m^{-1,-2}_s e^{is\varphi_{++}(\xi,\eta)} \hat{h}(\xi - \eta) \hat{h}(\eta) d\eta ds \\
& \label{xxh2-2} \int_2^t \!\!\int m^{0,-1}_s e^{is\varphi_{++}(\xi,\eta)} \hat{h}(\xi - \eta) \partial_\eta \hat{h}(\eta) d\eta ds \\
& \label{xxh2-3} \int_2^t \!\!\int m^{1,0}_s e^{is\varphi_{++}(\xi,\eta)} \hat{h}(\xi - \eta) \partial_\eta^2 \hat{h}(\eta) d\eta ds \\
& \label{xxh2-4} \int_2^t \!\!\int m^{1,0}_s e^{is\varphi_{++}(\xi,\eta)} \partial_\eta \hat{h}(\xi - \eta) \partial_\eta \hat{h}(\eta) d\eta ds .
\end{align}
\end{subequations}
By Corollary~\ref{coro1},
\begin{equation*}
\begin{split} 
\|(\ref{xxh2-1})\|_2 & \lesssim \int_2^t \left\| e^{is\Delta} T_{m^{-1,-2}_s} ( e^{  -is\Delta} h , e^{  -is\Delta} h ) \right\|_2 ds \\
& \lesssim \int_2^t \sqrt{s} \|e^{-is\Delta} h \|_2 \|e^{  -is\Delta} h\|_\infty \,ds 
 \lesssim \int_2^t  \eps^2 s^{1/2} \frac{1}{s} ds \lesssim  \eps^2 \sqrt{t} .
\end{split}
\end{equation*}
The term~(\ref{xxh2-2}) is also easy to estimate, so we skip it, and consider next
\begin{equation*}
\begin{split}
\|(\ref{xxh2-3})\|_2 & \lesssim \int_2^t \left\| e^{is\Delta} T_{m^{1,0}_s } ( e^{  -is\Delta} x^2 h , e^{  -is\Delta} h ) \right\|_2 ds \\
& \lesssim \int_2^t \left\| x^2 h \right\|_2 \| e^{  -is\Delta} h \|_\infty ds 
 \lesssim \int_2^t  \eps^2 s^{5/8} \frac{1}{s} ds \lesssim  \eps^2 t^{5/8}.
\end{split}
\end{equation*}

Finally, the estimate of the term~(\ref{xxh2-4}) reduces to the previous one with the help of Lemma~\ref{gagnir}.

\subsection{Control of $ e^{it\Delta} h_2(h,h)$ in $L^\infty$}

\label{fashion}

The idea is to rewrite $h_2$ in the following fashion (we assume $t>3$, the adaptation to $2<t<3$ being obvious)
$$
h_2 = \int_2^{t-1} e^{is\Delta} H^1_2(s) \,ds + \int_{t-1}^t H^2_2(s) \,ds,
$$
where
$$
\left\{
\begin{array}{l} 
\widehat{H^1_2}(\xi,s) \overset{def}{=} \int m^{1,0}_s e^{is\widetilde{\varphi_{++}}(\xi,\eta)} \hat{h}(\xi - \eta) \hat{h}(\eta) \,d\eta \\
\widehat{H^2_2}(\xi,s) \overset{def}{=} \int m^{1,0}_s e^{is \varphi_{++} (\xi,\eta)} \hat{h}(\xi - \eta) \hat{h}(\eta) \,d\eta 
\end{array} 
\right.
$$
with
$$
\widetilde{\varphi_{++}}(\xi,\eta) = |\eta|^2 + |\xi-\eta|^2.
$$
Then 
\begin{subequations}
\begin{align}
\label{cat1} e^{-it\Delta} h_2(h,h) = & e^{-it\Delta} \int_2^{t-1} e^{is\Delta} H^1_2(s) \,ds \\
\label{cat2} & \;\;\;\;\;\;\;\;\;\; + e^{-it\Delta} \int_{t-1}^t H^2_2(s) \,ds.
\end{align}
\end{subequations}

\subsubsection{Estimate of~(\ref{cat1})}
The point is that $\partial_\eta \varphi_{++} = \partial_\eta \widetilde{\varphi_{++}}$, therefore the manipulations made in sections~\ref{h2hhl2} and~\ref{controlxh2} can also be performed on $H^1_2$.

On the one hand, it follows at once as in section~\ref{h2hhl2} that
\begin{equation}
\label{eagle}
\|H^1_2(s)\|_2 \lesssim \frac{\eps^2}{s\sqrt{s}} .
\end{equation}
On the other hand, just like we transformed $\partial_\xi \widehat{h}_2(h,h)$ into the terms~(\ref{xh2-1})-(\ref{xh2-4}), we can write
\begin{subequations}
\begin{align}
\label{springbok1} \partial_\xi \widehat{H^1_2}(\xi) = & \int \frac{1}{s} m^{-2,-3}_s e^{is\widetilde{\varphi_{++}}(\xi,\eta)} \hat{h}(\xi - \eta) \hat{h}(\eta) d\eta \\
\label{springbok2} & + \int \frac{1}{s} m^{-1,-2}_s e^{is\widetilde{\varphi_{++}}(\xi,\eta)} \hat{h}(\xi - \eta) \partial_\eta \hat{h}(\eta) d\eta  \\
\label{springbok3} & +  \int \frac{1}{s} m_s^{0,-1} e^{is\widetilde{\varphi_{++}}(\xi,\eta)} \hat{h}(\xi - \eta) \partial_\eta^2 \hat{h}(\eta) d\eta  \\
\label{springbok4} & + \int \frac{1}{s} m_s^{0,-1} e^{is\widetilde{\varphi_{++}}(\xi,\eta)} \partial_\eta \hat{h}(\xi - \eta) \partial_\eta \hat{h}(\eta) d\eta .
\end{align}
\end{subequations}
Let us focus for instance on~(\ref{springbok2}). Proceeding like in Section~\ref{h2hhl2} (but with these two important differences that we do not integrate in $s$ for the moment, and that the first $e^{is\Delta}$ does not appear any more, which allows estimates in Lebesgue spaces with indices lower than $2$), we get

\begin{equation}
\begin{split}
\|(\ref{springbok2})\|_{8/5} & \lesssim \frac{1}{s} \left\| T_{m^{-1,-2}_s} ( e^{-is\Delta} h , e^{  -is\Delta} x h  \right\|_{8/5} ds \\
& \lesssim \frac{1}{s} s^{1/2} \|x h\|_{2}\|e^{  -is\Delta} h\|_{8}ds 
 \lesssim \eps^2 \frac{1}{s} s^{1/2} \frac{1}{s^{3/4}} ds \lesssim \frac{\eps^2}{s^{5/4}} .
\end{split}
\end{equation}

Estimating similarly~(\ref{springbok1})~(\ref{springbok3})~(\ref{springbok4}), we get
\begin{equation}
\label{baldeagle}
\| x H^1_2 (s) \|_{8/5} \lesssim \frac{\eps^2}{s^{5/4}} .
\end{equation}
Putting together~(\ref{eagle}) and~(\ref{baldeagle}) gives
$$
\|H^1_2(s)\|_1 \lesssim \|H^1_2(s)\|_2 + \|x H^1_2 (s) \|_{8/5} \lesssim  \frac{\eps^2}{s^{5/4}}.
$$
Therefore 
\begin{equation*}
\begin{split}
\|(\ref{cat1})\|_\infty & = \left\| \int_2^{t-1} e^{i(s-t)\Delta} H^1_2(s)\,ds \right\|_\infty 
 \lesssim \int_2^{t-1} \frac{1}{t-s} \|H^1_2(s)\|_1 \,ds 
 \lesssim \eps^2 \int_2^{t-1} \frac{1}{t-s} \frac{1}{s^{5/4}} \,ds \lesssim \frac{\eps^2}{t}.
\end{split}
\end{equation*}

\subsubsection{Estimate of~(\ref{cat2})}

Proceeding as in Section~\ref{blue}, one gets
$$
\|\langle x \rangle^2 H^2_2(s) \|_2 \lesssim \eps^2 s^{-3/8} .
$$
Therefore,
\begin{equation*}
\begin{split}
\|(\ref{cat2})\|_\infty & = \left\| e^{-it\Delta} \int_{t-1}^t  H(s)\,ds \right\|_\infty 
 \lesssim \frac{1}{t} \int_{t-1}^t \|H(s)\|_1\,ds 
 \lesssim \eps^2 t^{-11/8}.
\end{split}
\end{equation*}

\section{Estimates on $h_2(g,f)$ and $h_2(f,g)$}

\label{h2fg}

As explained at the beginning of Section~\ref{h2hh}, we split the estimate of $h_2(f,f)$ into the estimate of $h_2(h,h)$, and the estimate of $h_2(f,g)$ and $h_2(g,f)$. The present section is dedicated to the latter kind of estimates.

The idea is that $g$ is a quadratic expression, essentially equal to a pseudo-product of $f$ with itself at time $t$. Therefore, as we will see shortly, $h_2(g,f)$ and $h_2(f,g)$ will be trilinear terms.

\subsection{Control of $h_2(g,f)$ and $h_2(f,g)$ in $L^2$}

This can be done as in Section~\ref{h2hhl2}.

\subsection{Decomposition in $(\xi,\eta,\sigma)$ space}

$h_2(f,g) + h_2(g,f)$ is given by
\begin{equation*}
\begin{split}
& h_2(f,g) + h_2(g,f) = \\
& \;\;\;\;\;\;\;\;\int_2^t \int \left[ \chi_s^{++S}(\xi,\eta) q(\xi,\eta) + \chi_s^{++S}(\xi,\xi-\eta) q(\xi,\xi-\eta) \right] \\
& \;\;\;\;\;\;\;\;\;\;\;\;\;\;\;\;\;\;\;\;\;\;\;\;\;\;\;\;\;\;\;\;\;\;\;\;\;\;\;\; \frac{q(\eta,\sigma) \chi_s^{++,T}(\eta,\sigma)}{\varphi_{++}(\eta,\sigma)} e^{is\varphi_{+++}(\xi,\eta,\sigma)} \widehat{f}(\sigma) \widehat{f}(\eta-\sigma) \widehat{f} (\xi-\eta) d\eta \, d\sigma \,ds \\
&\;\;\;\;\;\;\;\; + \int_2^t \int \left[ \chi_s^{++S}(\xi,\eta) q(\xi,\eta) + \chi_s^{++S}(\xi,\xi-\eta) q(\xi,\xi-\eta) \right] \\
& \;\;\;\;\;\;\;\;\;\;\;\;\;\;\;\;\;\;\;\;\;\;\;\;\;\;\;\;\;\;\;\;\;\;\;\;\;\;\;\;\frac{q(\eta,\sigma) \chi_s^{--,T}(\eta,\sigma)}{\varphi_{--}(\eta,\sigma)} e^{is\varphi_{+--}(\xi,\eta,\sigma)} \widehat{f}(\sigma) \widehat{f}(\eta-\sigma) \widehat{f} (\xi-\eta) d\eta \, d\sigma \,ds .\\
\end{split}
\end{equation*}
The phases $\varphi_{+++}$ and $\varphi_{+--}$ share (see Section~\ref{resonances}) the property that
$$
\mathscr{R} _{+++} = \mathscr{R} _{+--} = \{ \xi = \eta =\sigma = 0\}.
$$
This makes the two cases very similar; we will focus from now on on the $+++$ case,

\medskip

The symbols  which occur are  of the form $q(\xi,\eta) m^{0,0}(\xi,\eta) m^{-1,-2}(\eta,\sigma)$ or $q(\xi,\xi-\eta) m^{0,0}(\xi,\eta) m^{-1,-2}(\eta,\sigma)$.  
We now claim that it suffices to treat the case of a symbol of the form $m^{1,0}(\xi,\eta,\sigma) m^{0,0}(\xi,\eta) m^{-1,-2}(\eta,\sigma)$, in other words one can replace $q(\xi,\eta)$ (or $q(\xi,\xi-\eta)$) by $m^{1,0}(\xi,\eta,\sigma)$.  This is so since
\medskip

\noindent {\it a)} For $|(\xi,\eta)|\leq 1$, $q(\xi,\eta)$ satisfies homogeneous estimates in $(\xi,\eta,\sigma)$ of order 1: this follows from the linearity of $q$ in $\eta,\xi$.

\noindent {\it b)} For $|(\xi,\eta)|\geq 2$, $q(\xi,\eta)$ satisfies homogeneous estimates in $(\xi,\eta,\sigma)$ of order 0, simply because then $p$ is equal to one.

\noindent {\it c)}  We are left with $1 \leq |(\xi,\eta)|\leq 2$. Then $q$ satisfies homogeneous estimates in $(\xi,\eta,\sigma)$ except if $|\sigma| >> |(\xi,\eta)|$.

As a conclusion, $q(\xi,\eta)$ is of the form $m^{1,0}(\xi,\eta,\sigma)$ except if $1 \leq |(\xi,\eta)|\leq 2$ and $|\sigma| >> |(\xi,\eta)|$. This latter possibility is very simple to treat, since it is away from  the zero frequency, which is the main difficulty. We thus ignore it and consider in the following that


\begin{equation}
\label{modelfg}
\widehat{h}_2^{+++}(\xi) = 
\int_2^t \int m^{1,0}(\xi,\eta,\sigma) m^{0,0}(\xi,\eta) m^{-1,-2}(\eta,\sigma) e^{is\varphi_{+++}} \widehat{f}(\sigma) \widehat{f}(\eta-\sigma) \widehat{f} (\xi-\eta) d\eta \,d\sigma \,ds .
\end{equation}

Using cut-off functions $\chi^{+++,S}_s(\xi,\eta,\sigma)$ and $\chi^{+++,T}_s(\xi,\eta,\sigma)$ adapted to $\varphi_{+++}$, as explained in Section~\ref{partition}, one can decompose the integral defining $\widehat{h}_2^{+++}$. On the support of $\chi^T_s$, $\varphi_{+++}$ does not vanish, thus one can integrate by parts using 
$$
\frac{1}{i \varphi_{+++}} \partial_s e^{is\varphi_{+++}} = e^{is\varphi_{+++}}\;\;\;\mbox{written symbolically}\;\;\;M^{-2,-2} \partial_s e^{is\varphi_{+++}} = e^{is\varphi_{+++}}
$$
and obtain
\begin{equation*}
\begin{split}
& \widehat{h}_2^{+++}(\xi)  \\
& = \left. \int \chi^{+++,T}_s (\xi,\eta,\sigma)  m^{-1,-2}(\xi,\eta,\sigma) m^{0,0}(\xi,\eta) m^{-1,-2}(\eta,\sigma) e^{is\varphi_{+++}} \widehat{f}(\sigma) \widehat{f}(\eta-\sigma) \widehat{f} (\xi-\eta) d\eta\, d\sigma \,ds \right]_2^t\\
& \;\;\;\;\;\;- \int_2^t \int \chi^{+++,T}_s (\xi,\eta,\sigma)  m^{-1,-2}(\xi,\eta,\sigma) m^{0,0}(\xi,\eta) m^{-1,-2}(\eta,\sigma) \\
&\qquad\qquad \qquad  e^{is\varphi_{+++}}  \widehat{f}(\sigma) \widehat{f}(\eta-\sigma) \partial_s\widehat{f} (\xi-\eta) d\eta\, d\sigma \,ds+ \{\mbox{similar or easier  terms} \} \\
& \;\;\;\;\;\;+ \int_2^t \int \chi^{+++,S}_s(\xi,\eta,\sigma)  m^{1,0}(\xi,\eta,\sigma) m^{0,0}(\xi,\eta) m^{-1,-2}(\eta,\sigma)
 e^{is\varphi_{+++}} \widehat{f}(\sigma) \widehat{f}(\eta-\sigma) \widehat{f} (\xi-\eta) d\eta\, d\sigma \,ds \\
& \overset{def}{=} \widehat{h}_{2,1}^{+++} (\xi) + \widehat{h}_{2,2}^{+++} (\xi) + \widehat{h}_{2,3}^{+++} (\xi).
\end{split}
\end{equation*}
In particular  $\{\mbox{similar or easier  terms} \} $ include the case where the time derivative hits 
$\chi^{+++,T}_s (\xi,\eta,\sigma)  $ which gives a much simpler term and we will not 
detail it here. 
The terms~$h_{2,1}$ and $h_{2,2}$ will be estimated directly; as for~$h_{2,3}$, its decay is not strong enough to allow for direct estimates, and we will have to take advantage of the non-vanishing of $\partial_{\eta,\sigma} \varphi_{+++}$ on $\operatorname{Supp}\chi^{+++,S}_s(\xi,\eta,\sigma)$ and use the identity 
$$
\frac{1}{i s (\partial_{\eta,\sigma} \varphi_{+++})^2} \partial_{\eta,\sigma} \varphi_{+++} \cdot \partial_{\eta,\sigma} e^{is \varphi_{+++}} = e^{is \varphi_{+++}}
$$
that we write symbolically
\begin{equation}
\label{ibp2}
\frac{1}{s} M^{-1,-1} \partial_{\eta,\sigma} e^{is \varphi_{+++}} = e^{is \varphi_{+++}}.
\end{equation}

\subsection{Control of $x h_{2,1}^{+++}$ in $L^2$} 
First notice that $h_{2,1}^{+++}$ is the sum of one term evaluated at time $t$, and one term evaluated at time $2$ ; since the term evaluated at time $2$ is easy to estimate, we skip it, and focus in the following on the term corresponding to time $t$.

Applying $\partial_\xi$ to $h_{2,1}^{+++}$ gives terms of the type (the indices $j,k,l$ are always non-positive) 
\begin{subequations}
\begin{align}
\label{armadillo1}
& \int m^{j,j-1} m^{k,k} m^{l,l-1} e^{it\varphi_{+++}} \widehat{f}(\sigma) \widehat{f}(\eta-\sigma) \widehat{f}(\xi-\eta) d\eta \,d\sigma \;\;\;\;\mbox{with $j+k+l = -3$}\\
\label{armadillo2}
& \int t m^{0,-1} m^{0,0} m^{-1,-2} e^{it\varphi_{+++}} \widehat{f}(\sigma) \widehat{f}(\eta-\sigma) \widehat{f}(\xi-\eta) d\eta \,d\sigma\\
\label{armadillo3}
& \int m^{-1,-2} m^{0,0} m^{-1,-2} e^{it\varphi_{+++}} \widehat{f}(\sigma) \widehat{f}(\eta-\sigma) \partial_\xi \widehat{f}(\xi-\eta) d\eta \,d\sigma ;
\end{align}
\end{subequations}
all these terms can be estimated directly with Corollary~\ref{coro2}. For instance
\begin{equation*}
\begin{split}
\left\| (\ref{armadillo3}) \right\|_2 & = \left\| T_{m^{-1,-2}m^{0,0} m^{-1,-2}} (e^{-it\Delta} f , e^{-it\Delta} f , e^{-it\Delta} xf) \right\|_2 \\
& \lesssim t \left\| e^{-it\Delta} f \right\|_{16} \left\| e^{-it\Delta} f \right\|_{16} \left\| e^{-it\Delta} xf \right\|_{8/3} \\
& \lesssim  \eps^ t t^{-\frac{7}{8}} t^{-\frac{7}{8}} t^{-\frac{1}{4}} \left\| xf \right\|_{8/5} \lesssim \eps^2  t^{-1} \left\|\langle x \rangle^{\frac{3}{2}} f\right\|_{2} \lesssim \eps^3  t^{-\frac{1}{2}}.
\end{split}
\end{equation*}

\subsection{Control of $x^2 h_{2,1}^{+++}$ in $L^2$} 

Applying $\partial_\xi$ to $h_{2,1}^{+++}$ gives terms of the type (the indices $j$, $k$, $l$ are always non-positive)
\begin{subequations}
\begin{align}
& \int m^{j,j-1} m^{k,k} m^{l,l-1} e^{it\varphi_{+++}} \widehat{f}(\sigma) \widehat{f}(\eta-\sigma) \widehat{f}(\xi-\eta) d\eta \,d\sigma \;\;\;\;\mbox{with $j+k+l = -4$} \\
& \int t m^{j,j-1} m^{k,k} m^{l,l-1} e^{it\varphi_{+++}} \widehat{f}(\sigma) \widehat{f}(\eta-\sigma) \widehat{f}(\xi-\eta) d\eta \,d\sigma \;\;\;\;\mbox{with $j+k+l = -2$} \\
\label{bobcat3}
& \int  m^{j,j-1} m^{k,k} m^{l,l-1} e^{it\varphi_{+++}} \widehat{f}(\sigma) \widehat{f}(\eta-\sigma) \partial_\xi \widehat{f}(\xi-\eta) d\eta \,d\sigma \;\;\;\;\mbox{with $j+k+l = -3$} \\
& \int t^2 m^{1,0} m^{0,0} m^{-1,-2} e^{it\varphi_{+++}} \widehat{f}(\sigma) \widehat{f}(\eta-\sigma) \widehat{f}(\xi-\eta) d\eta \,d\sigma    \label{pb}\\
\label{bobcat5}
& \int t m^{0,-1} m^{0,0} m^{-1,-2} e^{it\varphi_{+++}} \widehat{f}(\sigma) \widehat{f}(\eta-\sigma) \partial_\xi \widehat{f}(\xi-\eta) d\eta \,d\sigma\\
\label{bobcat6}
& \int m^{-1,-2} m^{0,0} m^{-1,-2} e^{it\varphi_{+++}} \widehat{f}(\sigma) \widehat{f}(\eta-\sigma)  \partial_\xi^2 \widehat{f}(\xi-\eta) d\eta \,d\sigma ;
\end{align}
\end{subequations}
all these terms can be estimated directly with Corollary~\ref{coro2}, except for the last one, which requires a further manipulation; indeed, the Lebesgue exponents $\infty$, $\infty$, $2$ are not allowed by Corollary~\ref{coro2} for the arguments of the multilinear operator.

Thus one writes $ \partial_\xi^2 \widehat{f}(\xi-\eta) = - \partial_\eta \partial_\xi \widehat{f}(\xi-\eta)$, and integrates by parts in $\eta$. This yields terms of type~(\ref{bobcat3}) and~(\ref{bobcat5}), as well as
\begin{equation}
\label{meise}
\int m^{-1,-2} m^{0,0} m^{-1,-2} e^{it\varphi_{+++}} \widehat{f}(\sigma) \partial_\eta \widehat{f}(\eta-\sigma)  \partial_\xi \widehat{f}(\xi-\eta) d\eta \,d\sigma
\end{equation}
which can be estimated as follows
\begin{equation*}
\begin{split}
\left\| (\ref{meise}) \right\|_2 & = \left\| T_{m^{-1,-2}m^{0,0} m^{-1,-2}} (e^{-it\Delta} f , e^{-it\Delta} xf , e^{-it\Delta} xf) \right\|_2 \\
& \lesssim t \left\| e^{-it\Delta} f \right\|_{32} \left\| e^{-it\Delta} f \right\|_{32/14} \left\| e^{-it\Delta} xf \right\|_{32} \\
& \lesssim  t \left\| xf \right\|_{32/31} t^{-\frac{15}{16}} \left\| xf \right\|_{32/18} t^{-\frac{1}{8}} \epsilon t^{-\frac{15}{16}} \\
& \lesssim t \left\| \langle x^2 \rangle f \right\|_2 t^{-\frac{15}{16}} \left\| \langle |x|^{5/4} \rangle f \right\|_2 t^{-\frac{1}{8}} \epsilon t^{-\frac{15}{16}}\\
& \lesssim \epsilon^3 t \, t \, t^{-\frac{15}{16}} t^{1/4} t^{-\frac{1}{8}} t^{-\frac{15}{16}} \lesssim \epsilon^3 t^{1/8}.
\end{split}
\end{equation*}

\subsection{Control of $e^{-it\Delta} h_{2,1}^{+++}$ in $L^\infty$}

This control would be very easily obtained if pseudo-product operators were bounded with values in $L^\infty$. Since this is not the case, we use 
Sobolev inequality, namely 

\[
\begin{split}
|| e^{-it\Delta} h_{2,1}^{+++} ||_{L^\infty}  & \leq  || e^{-it\Delta} h_{2,1}^{+++} ||_{W^{1,8}}   \\
& \leq  || T_{m^{-1,-1}m^{0,0}m^{-1,-2}}( e^{-it\Delta} f ,  e^{-it\Delta} f ,  e^{-it\Delta} f)   ||_{L^8} \\
 & \leq   t || e^{-it\Delta} f ||_{L^{24}}^3  
 \leq  \eps^3 \frac1t  \frac1{t^{3/4}}
\end{split}
\]
where we have used that $ m^{-1,-2} + \xi m^{-1,-2}  = m^{-1,-1}   $. 

\subsection{Control of $xh_{2,2}^{+++}$ in $L^2$}

\label{xh2+++}

Applying $\partial_\xi$ to $\widehat{h}_{2,2}(\xi)$ gives terms of the type (the indices $j$, $k$, $l$ are always non-positive)
\begin{subequations}
\begin{align}
& \label{walrus1} \int_2^t \int m^{j,j-1} m^{k,k} m^{l,l-1} e^{is\varphi_{+++}}  \widehat{f}(\sigma) \widehat{f}(\eta-\sigma) \partial_s\widehat{f} (\xi-\eta) d\eta\, d\sigma \,ds \;\;\;\;\mbox{with $j+k+l = -3$} \\
& \label{walrus2} \int_2^t \int s m^{0,-1} m^{0,0} m^{-1,-2} e^{is\varphi_{+++}} \widehat{f}(\sigma) \widehat{f}(\eta-\sigma) \partial_s\widehat{f} (\xi-\eta) d\eta\, d\sigma \,ds \\
& \label{walrus3} \int_2^t \int m^{-1,-2} m^{0,0} m^{-1,-2} e^{is\varphi_{+++}} \widehat{f}(\sigma) \widehat{f}(\eta-\sigma) \partial_\xi \partial_s\widehat{f} (\xi-\eta) d\eta\, d\sigma \,ds.
\end{align}
\end{subequations}
We now transform~(\ref{walrus3}) by observing that $\partial_\xi \partial_s \widehat{f} (\xi-\eta) = - \partial_\eta \partial_s \widehat{f} (\xi-\eta)$ and integrating by parts in $\eta$. This gives terms of type~(\ref{walrus1})~(\ref{walrus2}) as well as
\begin{equation}
\label{walrus4}
\int_2^t \int m^{-1,-2} m^{0,0} m^{-1,-2} e^{is\varphi_{+++}} \widehat{f}(\sigma)  \partial_\eta\widehat{f}(\eta-\sigma)  \partial_s\widehat{f} (\xi-\eta) d\eta\, d\sigma \,ds.
\end{equation}
Actually, terms like \eqref{walrus4} can also come from the   $\{\mbox{similar or easier  terms} \}  $
in the definition of $h_{2,2}^{+++}$. 
Now the terms~(\ref{walrus1})~(\ref{walrus2})~(\ref{walrus4}) are easily estimated by using Corollary~\ref{coro2} and~(\ref{derf}). For instance
\begin{equation}
\begin{split}
\left\| (\ref{walrus1}) \right\|_2 & \leq \int_2^t \left\| e^{is\Delta} T_{m^{j,j-1} m^{k,k} m^{l,l-1}}(e^{-is\Delta}f,e^{-is\Delta}f,e^{-is\Delta}\partial_s f) \right\|_2 ds \\
& = \int_2^t \left\| T_{m^{j,j-1} m^{k,k} m^{l,l-1}}(e^{-is\Delta}f,e^{-is\Delta}f, Q(u,u) + Q(\bar u,\bar u) \right\|_2 ds \\
& \lesssim \int_2^t s^{3/2} \|e^{-is\Delta} f\|_8^4\,ds 
 \lesssim \int_2^t \eps^4 s^{3/2} (s^{-3/4})^4 \,ds \lesssim \eps^4.
\end{split}
\end{equation}

\subsection{Control of $x^2 h_{2,2}^{+++}$ in $L^2$}

\label{green}

We saw that $xh_{2,2}^{+++}$ can be reduced to terms of the type~(\ref{walrus1}) (\ref{walrus2}) (\ref{walrus4}). Now apply $\partial_\xi$ to (\ref{walrus1})~(\ref{walrus2})~(\ref{walrus4}), and, as in Section~\ref{xh2+++}, make sure by an integration by parts if necessary that an $s$ derivative and a $\xi$ derivative do not hit the same $f$. This gives terms of the types (the indices $j,k,l$ are always non-positive)
\begin{subequations}
\begin{align}
& \label{octopus1}\int_2^t \int m^{j,j-1} m^{k,k} m^{l,l-1} e^{is\varphi_{+++}}  \widehat{f}(\sigma) \widehat{f}(\eta-\sigma) \partial_s\widehat{f} (\xi-\eta) d\eta\, d\sigma \,ds \;\;\;\;\mbox{with $j+k+l = -4$} \\
& \label{octopus2}\int_2^t \int s^2 m^{1,0} m^{0,0} m^{-1,-2} e^{is\varphi_{+++}} \widehat{f}(\sigma) \widehat{f}(\eta-\sigma) \partial_s\widehat{f} (\xi-\eta) d\eta\, d\sigma \,ds \\
& \label{octopus3}\int_2^t \int m^{-1,-2} m^{0,0} m^{-1,-2} e^{is\varphi_{+++}}  \widehat{f}(\sigma)  \partial_\eta^2\widehat{f}(\eta-\sigma) \partial_s\widehat{f} (\xi-\eta) d\eta\, d\sigma \,ds \\
& \label{octopus5}\int_2^t \int s m^{j,j-1} m^{k,k} m^{l,l-1} e^{is\varphi_{+++}}  \widehat{f}(\sigma) \widehat{f}(\eta-\sigma) \partial_s\widehat{f} (\xi-\eta) d\eta\, d\sigma \,ds \;\;\;\;\mbox{with  $j+k+l = -2$} \\
& \label{octopus6}\int_2^t \int s m^{j,j-1} m^{k,k} m^{l,l-1} e^{is\varphi_{+++}}  \widehat{f}(\sigma)  \partial_\eta \widehat{f}(\eta-\sigma) \partial_s\widehat{f} (\xi-\eta) d\eta\, d\sigma \,ds \;\;\;\;\mbox{with  $j+k+l = -1$} \\
& \label{octopus7}\int_2^t \int m^{j,j-1} m^{k,k} m^{l,l-1} e^{is\varphi_{+++}} \widehat{f}(\sigma) \partial_\eta \widehat{f}(\eta-\sigma) \partial_s \widehat{f} (\xi-\eta) d\eta\, d\sigma \,ds \;\;\;\;\mbox{with $j+k+l = -3$} .
\end{align}
\end{subequations}
In order to bound the terms above, one should notice first that $e^{-it\Delta} \partial_s f= \alpha Q(u,u) + \beta Q(\bar u,\bar u)$, hence 
$$
\left\| e^{-it\Delta} \partial_s f \right\|_p \lesssim \eps^2 t^{\frac{2}{p}-2} \;\;\;\;\;\mbox{for $2\leq p<\infty$}.
$$
The estimates for~(\ref{octopus1})-(\ref{octopus7}) follow in a straightforward fashion using Corollary~\ref{coro2}, except for~(\ref{octopus3}). 

But for~(\ref{octopus3}), writing $\partial_\eta^2\widehat{f}(\eta-\sigma) = - \partial_\eta \partial_\sigma \widehat{f}(\eta-\sigma)$, and integrating by parts gives, in addition to already treated terms,
\begin{equation}
\label{loutre}
\int_2^t \int m^{-1,-2} m^{0,0} m^{-1,-2} e^{is\varphi_{+++}} \partial_\sigma \widehat{f}(\sigma)  \partial_\eta \widehat{f}(\eta-\sigma) \partial_s\widehat{f} (\xi-\eta) d\eta\, d\sigma \,ds,
\end{equation}
which can be treated directly:
\begin{equation}
\begin{split}
\left\| (\ref{loutre}) \right\|_2 & \lesssim \int_2^t \left\| T_{m^{-1,-2} m^{0,0} m^{-1,-2}} \left( e^{-it\Delta} xf, e^{-it\Delta} xf,e^{-it\Delta} \partial_s f\right) \right\|_2 \,ds \\
& \lesssim \int_2^t s  \left\|e^{-it\Delta} xf \right\|_{32/7}^2 \left\| e^{-it\Delta} \partial_s f \right\|_{16} \,ds 
 \lesssim \int_2^t s \left( s^{-9/16} \left\| xf \right\|_{32/25} \right)^2 \eps^2 s^{-15/8} \,ds \\
& \lesssim \int_2^t s \left( s^{-9/16} \left\| \langle x \rangle^{13/8} f \right\|_2 \right)^2 \eps^2 s^{-15/8} \, ds 
 \lesssim \eps^4 \int_2^t s \left( s^{-9/16} s^{5/8} \right)^2 s^{-15/8}\,ds 
  \lesssim \eps^4 t^{1/4}.
\end{split}
\end{equation}

\subsection{Control of $e^{-it\Delta} h_{2,2}^{+++}$ in $L^\infty$}

\label{sameidea}

Proceeding as in Section~\ref{fashion}, we rewrite
$$
h_{2,2}^{+++} = \int_2^{t-1} e^{is\Delta} H^1_{2,2}(s) \,ds + \int_{t-1}^t H^2_{2,2}(s) \,ds,
$$
where
$$
\left\{
\begin{array}{l} 
\widehat{H^1_{2,2}}(\xi,s) \overset{def}{=} \int m^{-1,-2}(\xi,\eta,\sigma) m^{0,0}(\xi,\eta) m^{-1,-2}(\eta,\sigma) e^{is\widetilde{\varphi}_{+++}}  \widehat{f}(\sigma) \widehat{f}(\eta-\sigma) \partial_s\widehat{f} (\xi-\eta) d\eta\, d\sigma  \\
\widehat{H^2_{2,2}}(\xi,s) \overset{def}{=} \int m^{-1,-2}(\xi,\eta,\sigma) m^{0,0}(\xi,\eta) m^{-1,-2}(\eta,\sigma) e^{is\varphi_{+++}}  \widehat{f}(\sigma) \widehat{f}(\eta-\sigma) \partial_s\widehat{f} (\xi-\eta) d\eta\, d\sigma  \\
\end{array} 
\right.
$$
with
$$
\widetilde{\varphi}_{+++}(\xi,\eta) =  |\xi-\eta|^2 + |\eta-\sigma|^2 + |\sigma|^2.
$$
Then 
\begin{subequations}
\begin{align}
\label{dog1} e^{-it\Delta} h_{2,2}^{+++} = & e^{-it\Delta} \int_2^{t-1} e^{is\Delta} H^1_{2,2}(s) \,ds \\
\label{dog2} & \;\;\;\;\;\;\;\;\;\; + e^{-it\Delta} \int_{t-1}^t H^2_{2,2}(s) \,ds.
\end{align}
\end{subequations}

\subsubsection{Estimate of~(\ref{dog1})}

On the one hand, one sees immediately that
\begin{equation}
\label{hawk}
\|H^1_{2,2}(s)\|_2 = \left\| T_{m^{-1,-2} m^{0,0} m^{-1,-2}} ( e^{-is\Delta} f , e^{-is\Delta} f, e^{-is\Delta}\partial_s f ) \right\|_2 \lesssim \frac{1}{s^2}
\end{equation}
On the other hand, proceeding as in Section~\ref{xh2+++}, we can write $\partial_\xi \widehat{H^1_{2,2}}$
as a sum of terms of the type (the indices $j,k,l$ are always non-positive)
\begin{subequations}
\begin{align}
& \label{seal1}  \int m^{j,j-1} m^{k,k} m^{l,l-1} e^{is\varphi_{+++}}  \widehat{f}(\sigma) \widehat{f}(\eta-\sigma) \partial_s\widehat{f} (\xi-\eta) d\eta\, d\sigma  \;\;\;\;\mbox{with $j+k+l = -3$} \\
& \label{seal2}  \int s m^{0,-1} m^{0,0} m^{-1,-2} e^{is\varphi_{+++}} \widehat{f}(\sigma) \widehat{f}(\eta-\sigma) \partial_s\widehat{f} (\xi-\eta) d\eta\, d\sigma  \\
& \label{seal3}  \int m^{-1,-2} m^{0,0} m^{-1,-2} e^{is\varphi_{+++}} \widehat{f}(\sigma)  \partial_\sigma\widehat{f}(\eta-\sigma)  \partial_s\widehat{f} (\xi-\eta) d\eta\, d\sigma .
\end{align}
\end{subequations}
Let us focus for instance on~(\ref{seal1}). It can be estimated in $L^{8/5}$ as follows:
\begin{equation}
\begin{split}
\|(\ref{seal1})\|_{8/5} & = \left\| T_{m^{j,j-1} m^{k,k} m^{l,l-1}} (e^{-is\Delta}f,e^{-is\Delta}f,e^{-is\Delta}\partial_s f) \right\|_{8/5}  \\
& \lesssim s^{3/2} \|e^{-is\Delta} f\|_{16} \|f\|_{16} \|\partial_s f\|_2  
 \lesssim \eps^4  s^{3/2} s^{-7/8} s^{-7/8} \frac{1}{s}  \lesssim \frac{\eps^4}{s^{5/4}}.
\end{split}
\end{equation}
Estimating similarly~(\ref{springbok2})~(\ref{springbok3}), we get
\begin{equation}
\label{redtailhawk}
\|x H^1_{2,2} (s) \|_{8/5} \lesssim \frac{\eps^4}{s^{5/4}} .
\end{equation}
Putting together~(\ref{hawk}) and~(\ref{redtailhawk}) gives
$$
\|H^1_{2,2}(s)\|_1 \lesssim \|H^1_{2,2}(s)\|_2 + \|x H^1_{2,2} (s) \|_{8/5} \lesssim  \frac{\eps^4}{s^{5/4}}.
$$
Therefore 
\begin{equation}
\begin{split}
\left\| (\ref{dog1}) \right\|_\infty & = \left\| \int_2^{t-1} e^{i(s-t)\Delta} H^1_{2,2}(s)\,ds \right\|_\infty 
 \lesssim \int_2^t \frac{1}{t-s} \|H_{2,2}(s)\|_1 \,ds \\
& \lesssim \eps^4\int_2^t \frac{1}{t-s} \frac{1}{s^{5/4}} \,ds 
 \lesssim \frac{\eps^4}{t}.
\end{split}
\end{equation}

\subsubsection{Estimate of~(\ref{dog2})}

Proceeding as in section~(\ref{green}), one sees that
$$
\| \langle x \rangle^2 H^2_{2,2} \|_2 \lesssim \frac{\eps^4}{\sqrt{t}} .
$$
Therefore,
\begin{equation}
\begin{split}
\left\| (\ref{dog2}) \right\|_\infty & = \left\| e^{-it\Delta} \int_{t-1}^t H_{2,2}^2(s) \,ds \right\|_\infty 
 \lesssim \frac{1}{t} \left\|\int_{t-1}^t H_{2,2}^2(s) \,ds \right\|_1 
 \lesssim \frac{\eps^4}{t\sqrt{t}} .
\end{split}
\end{equation}

\subsection{Control of $xh_{2,3}^{+++}$ in $L^2$}

\label{controlxh21}

Applying $\partial_\xi$ to $\widehat{h}_{2,3}^{+++}(\xi)$,
one gets terms of the types (the indices $j,k,l$ are always non-positive)
\begin{subequations}
\begin{align}
&\label{nightingale0} \int_2^t \int m^{1,0} m^{k,k} m^{l,l-1} e^{is \varphi_{+++}} \widehat{f}(\sigma) \widehat{f}(\eta-\sigma) \widehat{f} (\xi-\eta) d\eta\, d\sigma \,ds\;\;\;\;\mbox{with $k+l = -2$} \\
& \label{nightingale1} \int_2^t \int m^{j,j-1} m^{k,k} m^{l,l-1} e^{is \varphi_{+++}} \widehat{f}(\sigma) \widehat{f}(\eta-\sigma) \widehat{f} (\xi-\eta) d\eta\, d\sigma \,ds \;\;\;\;\mbox{with $j+k+l = -1$}\\
& \label{nightingale2} \int_2^t \int  m^{1,0} m^{0,0} m^{-1,-2}  e^{is \varphi_{+++}} \widehat{f}(\sigma) \widehat{f}(\eta-\sigma) \partial_\xi \widehat{f} (\xi-\eta) d\eta\, d\sigma \,ds \\
& \label{nightingale3} \int_2^t \int  s m^{2,1} m^{0,0} m^{-1,-2}  e^{is \varphi_{+++}} \widehat{f}(\sigma) \widehat{f}(\eta-\sigma) \widehat{f} (\xi-\eta) d\eta\, d\sigma \,ds.
\end{align}
\end{subequations}
The terms (\ref{nightingale1}) and (\ref{nightingale2}) can be estimated in a straightforward fashion using Corollary~\ref{coro2}. Using the identity~(\ref{ibp2}) to transform the last term above, we see that it can be reduced to terms of the type~(\ref{nightingale0}) (\ref{nightingale1}) (\ref{nightingale2}).  
Finally, using~(\ref{ibp2}) to transform~(\ref{nightingale0}) yields terms of type (the indices $j,k,l$ are always non-positive)
\begin{subequations}
\begin{align}
& \label{nightingale4} \int_2^t \int \frac{1}{s} m^{j,j-1} m^{k,k} m^{l,l-1} e^{is \varphi_{+++}} \partial_\sigma \widehat{f}(\sigma) \widehat{f}(\eta-\sigma) \widehat{f} (\xi-\eta) d\eta\, d\sigma \,ds\;\;\;\;\mbox{with $j+k+l = -2$} \\
& \label{nightingale5} \int_2^t \int \frac{1}{s} m^{j,j-1} m^{k,k} m^{l,l-1} e^{is \varphi_{+++}} \widehat{f}(\sigma) \widehat{f}(\eta-\sigma) \widehat{f} (\xi-\eta) d\eta\, d\sigma \,ds \;\;\;\;\mbox{with $j+k+l = -3$};
\end{align}
\end{subequations}
these terms can be estimated directly using Corollary~\ref{coro2}.

\subsection{Control of $e^{-it\Delta}h_{2,3}^{+++}$ in $L^\infty$} This can be done as in sections~\ref{fashion} and~\ref{sameidea}.

\subsection{Control of $x^2 h_{2,3}^{+++}$ in $L^2$} We saw in Section~\ref{controlxh21} that $xh_{2,3}^{+++}$ can be reduced to terms of the form (\ref{nightingale1}) (\ref{nightingale2}) (\ref{nightingale4}) (\ref{nightingale5}). Applying $\partial_\xi$ to (\ref{nightingale1}) (\ref{nightingale2}) (\ref{nightingale4}) (\ref{nightingale5}) gives
terms of the following types (the indices $j,k,l$ are always non-positive)
\begin{subequations}
\begin{align}
&\label{bull1} \int_2^t \int m^{j,j-1} m^{k,k} m^{l,l-1} e^{is \varphi_{+++}} \widehat{f}(\sigma) \widehat{f}(\eta-\sigma) \widehat{f} (\xi-\eta) d\eta\, d\sigma \,ds\;\;\;\;\mbox{with $j+k+l = -2$} \\
&\label{bull2} \int_2^t \int s m^{1,0} m^{k,k} m^{l,l-1} e^{is \varphi_{+++}} \widehat{f}(\sigma) \widehat{f}(\eta-\sigma) \widehat{f} (\xi-\eta) d\eta\, d\sigma \,ds\;\;\;\;\mbox{with $j+k+l = -1$} \\
&\label{bull3} \int_2^t \int m^{j,j-1} m^{k,k} m^{l,l-1} e^{is \varphi_{+++}} \widehat{f}(\sigma) \widehat{f}(\eta-\sigma) \partial_\xi \widehat{f} (\xi-\eta) d\eta\, d\sigma \,ds\;\;\;\;\mbox{with $j+k+l = -1$} \\
&\label{bull4} \int_2^t \int m^{1,0} m^{k,k} m^{l,l-1} e^{is \varphi_{+++}} \widehat{f}(\sigma) \widehat{f}(\eta-\sigma) \partial_\xi  \widehat{f} (\xi-\eta) d\eta\, d\sigma \,ds\;\;\;\;\mbox{with $j+k+l = -2$} \\
&\label{bull5} \int_2^t \int s m^{2,1} m^{0,0} m^{-1,-2} e^{is \varphi_{+++}} \widehat{f}(\sigma) \widehat{f}(\eta-\sigma)  \partial_\xi \widehat{f} (\xi-\eta) d\eta\, d\sigma \,ds \\
&\label{bull6} \int_2^t \int  m^{1,0} m^{0,0} m^{-1,-2} e^{is \varphi_{+++}} \widehat{f}(\sigma) \widehat{f}(\eta-\sigma) \partial_\xi^2 \widehat{f} (\xi-\eta) d\eta\, d\sigma \,ds \\
&\label{bull7} \int_2^t \int \frac{1}{s} m^{j,j-1} m^{k,k} m^{l,l-1} e^{is \varphi_{+++}} \partial_\sigma \widehat{f}(\sigma) \widehat{f}(\eta-\sigma) \widehat{f} (\xi-\eta) d\eta\, d\sigma \,ds\;\;\;\;\mbox{with $j+k+l = -3$} \\
&\label{bull8} \int_2^t \int  m^{j,j-1} m^{k,k} m^{l,l-1} e^{is \varphi_{+++}} \partial_\sigma \widehat{f}(\sigma) \widehat{f}(\eta-\sigma) \widehat{f} (\xi-\eta) d\eta\, d\sigma \,ds\;\;\;\;\mbox{with $j+k+l = -1$} \\
&\label{bull9} \int_2^t \int \frac{1}{s} m^{j,j-1} m^{k,k} m^{l,l-1} e^{is \varphi_{+++}} \partial_\sigma \widehat{f}(\sigma) \widehat{f}(\eta-\sigma) \partial_\xi \widehat{f} (\xi-\eta) d\eta\, d\sigma \,ds\;\;\;\;\mbox{with $j+k+l = -2$} \\
&\label{bull10} \int_2^t \int \frac{1}{s} m^{j,j-1} m^{k,k} m^{l,l-1} e^{is \varphi_{+++}}  \widehat{f}(\sigma) \widehat{f}(\eta-\sigma) \widehat{f} (\xi-\eta) d\eta\, d\sigma \,ds\;\;\;\;\mbox{with $j+k+l = -4$} \\
&\label{bull11} \int_2^t \int m^{j,j-1} m^{k,k} m^{l,l-1} e^{is \varphi_{+++}}  \widehat{f}(\sigma) \widehat{f}(\eta-\sigma) \widehat{f} (\xi-\eta) d\eta\, d\sigma \,ds\;\;\;\;\mbox{with $j+k+l = -2$} .
\end{align}
\end{subequations}
These terms can be bounded in a similar way to all the estimates already performed, except for two of them:~(\ref{bull5}) and~(\ref{bull6}). Since the former can be reduced to the latter by integration by parts using~(\ref{ibp2}), we shall focus on~(\ref{bull6}), the difficulty being that the $L^\infty \times L^\infty \times L^2 \rightarrow L^2$ estimate does not hold in general for flag singularity paraproducts; to go around it, we shall use $(ii)$ in Theorem~\ref{FS}.

First observe that the case where $|\xi|\lesssim |\eta,\sigma|$ can be easily dealt with, for then the symbol $m^{1,0}(\xi,\eta,\sigma) m^{0,0}(\xi,\eta) m^{-1,-2}(\eta,\sigma)$ becomes $m^{0,-2}(\xi,\eta,\sigma) m^{0,0}(\xi,\eta)$. Thus we shall assume that $|\xi|>>|\eta,\sigma|$. Next, by  symmetry, it is possible to assume that in the integral defining~(\ref{bull6}), $|\sigma| \gtrsim |\eta-\sigma|$. Thus it suffices to consider the case $|\xi|>>|\sigma| \gtrsim |\eta-\sigma|$. As usual, this can be ensured by adding a cut-off function, which we denote $\chi(\xi,\eta,\sigma)$. Finally notice that the condition $|\sigma| \gtrsim |\eta-\sigma|$ imposes $|\sigma| \gtrsim \frac{1}{\sqrt{s}}$ on the support of $m^{-1,-2} (\sigma, \eta-\sigma)$. We now decompose
\begin{equation}
\label{coucou}
\begin{split}
& \int_2^t \int  m^{1,0} m^{0,0} m^{-1,-2} \chi(\eta,\sigma) e^{is \varphi_{+++}} \widehat{f}(\sigma) \widehat{f}(\eta-\sigma) \partial_\xi^2 \widehat{f} (\xi-\eta) d\eta\, d\sigma \,ds \\
& \qquad \qquad  = \int_2^t \int  \sum_{2^j \gtrsim \frac{1}{\sqrt{s}}} 2^{-j} m^{1,0} m^{0,0} m^{0,-1} \chi(\eta,\sigma) e^{is \varphi_{+++}}\\
&\hskip 40mm \times  \theta \left( \frac{\sigma}{2^j} \right) \widehat{f}(\sigma) \Theta \left( \frac{\eta-\sigma} {4\cdot 2^j} \right) \widehat{f}(\eta-\sigma) \partial_\xi^2 \widehat{f} (\xi-\eta) d\eta\, d\sigma \,ds.
\end{split}
\end{equation}
This can be estimated by $(ii)$ of Theorem~\ref{FS}:
\begin{equation*}
\begin{split}
\left\|(\ref{coucou}) \right\|_2 & \lesssim \int_2^t \sum_{2^j \gtrsim \frac{1}{\sqrt{s}}} 2^{-j} \left\| T_{m^{1,0} m^{0,0} m^{0,-1}} \left( P_j e^{is\Delta} f , P_{<j+2} e^{is\Delta}f, e^{is\Delta}x^2 f \right) \right\|_2 \,ds \\
& \lesssim \int_2^t \sum_{2^j \gtrsim \frac{1}{\sqrt{s}}} 2^{-j} \left\| e^{is\Delta} f \right\|_\infty \left\| e^{is\Delta} f \right\|_\infty \left\| x^2 f \right\|_2 \,ds 
 \lesssim \int_2^t  \eps^3 \sqrt{s} \frac{1}{s} \frac{1}{s} s \,ds 
 \lesssim  \eps^3 \sqrt{t}.
\end{split}
\end{equation*}

\section{Estimates on $h_3$}

\label{sectionh3}

From its definition~ in section~\ref{dolphin}, we see that $\widehat{h}_3$ can be written as
$$
\widehat{h}_3(\xi) = \widehat{h}_3^{+++}(\xi) + \widehat{h}_3^{+--}(\xi) +\widehat{h}_3^{---}(\xi) +\widehat{h}_3^{-++}(\xi) 
$$
with
\begin{equation}
\label{fox}
\widehat{h}_3^{\pm \pm \pm}(\xi) = \int_2^t  \int q(\eta,\sigma) m^{-1,-2}(\xi,\eta)  e^{is\varphi_{\pm\pm\pm}} \widehat{f}(\sigma) \widehat{f}(\eta-\sigma) \widehat{f}(\xi-\eta) \,d\eta\,d\sigma\,ds.
\end{equation}
Observe (as in Section~\ref{h2fg}) that the symbol $q(\eta,\sigma)  m^{-1,-2}(\xi,\eta)$ can be written $m^{1,0} (\xi,\eta,\sigma) m^{-1,-2}(\xi,\eta)$.

\subsection{The cases $+++$, $+--$ and $---$}

Notice (see Section~\ref{cubic}) that the three phases $\varphi = \varphi_{+++},\varphi_{+--},\varphi_{---}$ correspond to a space-time resonant set $\mathscr{R}  = \{ \varphi= 0\} \cup \{\partial_{\eta,\sigma} \varphi = 0  \} = \{ \xi =\eta =0\}$. Thus one can proceed as in Section~\ref{h2fg} to derive the desired estimates in most cases. Only one term has to be treated in a different way. It occurs when estimating $x^2 h_2^{+++}$ (we focus from now on on the $+++$ case), corresponds to~(\ref{bull6}), and reads
\begin{equation*}
\int_2^t \int  m^{1,0}(\xi,\eta,\sigma) m^{-1,-2}(\xi,\eta) e^{is \varphi_{+++}} \widehat{f}(\sigma) \widehat{f}(\eta-\sigma) \partial_\xi^2 \widehat{f} (\xi-\eta) d\eta\, d\sigma \,ds.
\end{equation*}
In order to estimate it, one has to distinguish between the cases $|\eta| >> |\xi-\eta|$ and $|\eta| \lesssim |\xi-\eta|$. Since they are fairly similar, we shall focus on the former; as usual this is ensured by adding a cut-off function $\chi$ which localizes frequencies to this set, thus we now consider
\begin{equation}
\label{mesange}
\int_2^t \int  m^{1,0}(\xi,\eta,\sigma) m^{-1,-2}(\xi,\eta) \chi(\xi,\eta) e^{is \varphi_{+++}} \widehat{f}(\sigma) \widehat{f}(\eta-\sigma) \partial_\xi^2 \widehat{f} (\xi-\eta) d\eta\, d\sigma \,ds.
\end{equation}
Observe that $|\eta| >> |\xi-\eta|$ implies $|\xi-\eta| << |\xi|$; and also that the support condition on $m^{-1,-2}_s(\xi,\eta)$ implies $|\xi| \gtrsim \frac{1}{\sqrt{s}}$. Therefore, we can estimate, with the help of Bernstein's lemma
\begin{equation*}
\begin{split}
\left\| (\ref{mesange}) \right\|_2 & \lesssim \sum_j \left\| P_j (\ref{mesange}) \right\|_2 \\
& = \left\| \sum_j P_j \int_2^t \int_2^t T_{m^{1,0} m^{-1,-2} \chi(\xi,\eta)} \left(e^{is\Delta} f\,,\,e^{is\Delta} f\,,\,P_{<j+1} e^{is\Delta} x^2 f \right) \,ds \right\|_2 \\
& \lesssim \int_2^t \sum_{2^j \gtrsim \frac{1}{\sqrt{s}}} 2^{-j} \left\|  T_{m^{1,0} m^{0,-1}} \left(e^{is\Delta} f\,,\,e^{is\Delta} f\,,\,P_{<j+1} e^{is\Delta} x^2 f \right) \right\|_2 \,ds \\
& \lesssim \int_2^t \sum_{2^j \gtrsim \frac{1}{\sqrt{s}}} 2^{-j} \left\| e^{is\Delta} f \right\|_{16} \left\| e^{is\Delta} f \right\|_{16} \left\| e^{is\Delta} P_{<j+1} x^2 f \right\|_{8/3} \,ds \\
& \lesssim  \eps^2 \int_2^t \sum_{2^j \gtrsim \frac{1}{\sqrt{s}}} 2^{-j} s^{-7/8} s^{-7/8} 2^{j/4} \|x^2 f\|_2 \,ds \\
& \lesssim  \eps^3 \int_2^t s^{3/8} s^{-7/8} s^{-7/8} s\,ds 
 \lesssim  \eps^3 t^{5/8}.
\end{split}
\end{equation*}
 
\subsection{The case $-++$}

In the case of $\varphi_{-++}$, the space-time resonant set $\mathscr{R} ^{-++} = \{ \xi = \sigma = \frac{1}{2} \eta \} $ (see~(\ref{cubic}) is not reduced to the origin. Using an appropriate (smooth, homogeneous of degree 0) cut-off function $\chi_{-++}$, one can restrict the problem to a neighbourhood of $\mathscr{R} ^{-++}$, the rest being treated as in Section~\ref{h2fg}. Furthermore, this neigbourhood is chosen such that $\xi$, $\eta$, $\sigma$ essentially have the same size.
This has the advantage of canceling the flag singularity, in other words the above symbol $\chi_{-++}(\xi,\eta,\sigma) m^{1,0}(\xi,\eta,\sigma) m^{-1,-2}(\xi,\eta)$ can be replaced by $\chi_{-++}(\xi,\eta,\sigma) m^{0,-2}(\xi,\eta,\sigma)$. 

In the following of this section, we will thus consider the term obtained after restricting $\xi$, $\eta$, $\sigma$ to a neighbourhood of $\mathscr{R} ^{-++}$:
\begin{equation}
\label{tiger}
\widehat{\widetilde{h}}_3^{-++}(\xi) = 
\int_2^t \int \chi_{-++}(\xi,\eta,\sigma) m^{0,-2}(\xi,\eta,\sigma) e^{is\varphi_{-++}} \widehat{f}(\sigma) \widehat{f}(\eta-\sigma) \widehat{f}(\xi-\eta) \,d\eta\,d\sigma\,ds.
\end{equation}

\subsection{Control of $\widetilde{h}_3^{-++}$ in $L^2$}

Immediate.

\subsection{Control of $x \widetilde{h}_3^{-++}$ in $L^2$}

\label{xh3l2}

Applying $\partial_\xi$ to $\widehat{\widetilde{h}}_3^{-++}$ gives
\begin{subequations}
\begin{align}
\label{sparrow2}
&  \int_2^t \int \chi_{-++}(\xi,\eta,\sigma) m^{-1,-3} (\xi,\eta,\sigma) e^{is\varphi_{-++}} \widehat{f}(\sigma) \widehat{f}(\eta-\sigma) \widehat{f}(\xi-\eta) \,d\eta\,d\sigma\,ds \\
\label{sparrow1} & +\int_2^t \int \chi_{-++}(\xi,\eta,\sigma) m^{0,-2}(\xi,\eta,\sigma) e^{is\varphi_{-++}} \partial_\xi \widehat{f}(\sigma) \widehat{f}(\eta-\sigma) \widehat{f}(\xi-\eta) \,d\eta\,d\sigma\,ds \\
\label{sparrow3} & + \int_2^t \int \chi_{-++}(\xi,\eta,\sigma) m^{0,-2}(\xi,\eta,\sigma) s \partial_\xi \varphi_{-++} e^{is\varphi_{-++}}  \widehat{f}(\sigma) \widehat{f}(\eta-\sigma) \widehat{f}(\xi-\eta) \,d\eta\,d\sigma\,ds .
\end{align}
\end{subequations}
The term~(\ref{sparrow2}) and (\ref{sparrow1}) can be estimated without any difficulty by Corollary~\ref{coro2}. 
For~(\ref{sparrow3}), we use the following relation:
$$
\partial_\xi \varphi_{-++} = -2 \partial_\eta \varphi_{-++} - \partial_\sigma \varphi_{-++}.
$$
Substituting the above right-hand side for $\partial_\xi \varphi_{-++}$ in (\ref{sparrow3}), we can integrate this term by parts using the relations
$$
s \partial_\eta \varphi_{-++} e^{is\varphi_{-++}} = \frac{1}{i} \partial_\eta e^{is\varphi_{-++}} \;\;\;\;\;\;\;\mbox{and}\;\;\;\;\;\;\; s \partial_\sigma \varphi_{-++} e^{is\varphi_{-++}} = \frac{1}{i} \partial_\sigma e^{is\varphi_{-++}},
$$
and the result is terms of the form~(\ref{sparrow2}) and~(\ref{sparrow1})

\subsection{Control of $e^{-it\Delta} \widetilde{h}_3^{-++}$ in $L^\infty$}

Our strategy will be the following: by the standard dispersive estimate, the decay of $e^{-it\Delta} \widetilde{h}_3^{-++}$ in $L^\infty$ follows from a bound on $\left\| \widetilde{h}_3^{-++} \right\|_1$. The quantity whose control was obtained in the previous paragraph, namely $\left\| x \widetilde{h}_3^{-++} \right\|_2$ barely fails to control $\left\| \widetilde{h}_3^{-++} \right\|_1$, but it will suffice to obtain a control of this weighted norm with a Lebesgue index slightly smaller than $2$.

To this we now turn: we will prove that $\left\| x \widetilde{h}_3^{-++} \right\|_{8/5}$ remains bounded, and, as explained above, this will give us the desired result since
$$
\left\| e^{-it\Delta} \widetilde{h}_3^{-++} \right\|_\infty \lesssim \frac{1}{t} \left\| \widetilde{h}_3^{-++} \right\|_1 \lesssim \frac{1}{t} \left( \left\| \widetilde{h}_3^{-++} \right\|_2 + \left\| x  \widetilde{h}_3^{-++} \right\|_{8/5} \right) .
$$

We saw in the previous paragraph that $x \widetilde{h}_3^{-++}$ can be written as a sum of the terms of the type~(\ref{sparrow2}) or~(\ref{sparrow1}). We will show how to obtain a bound for terms of the type~(\ref{sparrow1}), the case of~(\ref{sparrow2}) can be treated in an identical fashion.
 
Next observe that one can write
$$
B_{\chi_{-++}(\xi,\eta,\sigma) m^{0,-2}(\xi,\eta,\sigma) } = P_{<-\frac{1}{2}\log s} B_{m^{0,0}(\xi,\eta,\sigma)} + \sum_{-\frac{1}{2} \log s < j } \inf(1,2^{-2j}) P_j B_{m^{0.0}(\xi,\eta,\sigma)} .
$$
Therefore, using in addition to the traditional arguments the point $(iv)$ of Lemma~\ref{boundlin} gives
\begin{equation*}
\begin{split}
\left\| x  \widetilde{h}_3^{-++} \right\|_{8/5} & = \left\| \int_0^t  e^{is\Delta} B_{\chi_{-++}(\xi,\eta,\sigma) m^{0,-2}(\xi,\eta,\sigma) } \left( e^{-is\Delta} f , e^{-is\Delta} f , e^{-is\Delta} x f \right) \right\|_{8/5} \\
& \lesssim \int_0^t  \Big[ 
\left\| P_{<-\frac{1}{2}\log s} e^{is\Delta} B_{\chi_{-++}(\xi,\eta,\sigma) m^{0,0}(\xi,\eta,\sigma) } \left( e^{-is\Delta} f , e^{-is\Delta} f , e^{-is\Delta} x f \right) \right\|_{8/5} \\
& \;\;\;\;\;\;\;\;  + \sum_{j>-\frac{1}{2} \log s} \inf(1,2^{-2j}) \left\| P_j B_{m^{0.0}(\xi,\eta,\sigma)}\left( e^{-is\Delta} f , e^{-is\Delta} f , e^{-is\Delta} x f \right) \right\|_{8/5} \Big]
\, ds \\
& \lesssim \int_0^t \Big[ \left\| u(s) \right\|_{16} \left\| u(s) \right\|_{16} \left\| xf \right\|_2  +\!\!\!\!\sum_{j>-\frac{1}{2} \log s} \!\!\!\! \inf(1,2^{-2j}) 2^{j/4} t^{1/8} \left\| u(s) \right\|_{16} \left\| u(s) \right\|_{16} \left\| xf \right\|_2  \Big] ds \\
& \lesssim \eps^3 \int_0^t \Big[  s^{-7/8} s^{-7/8} + \sum_{j>-\frac{1}{2} \log s} \inf(1,2^{-2j}) 2^{j/4} s^{1/8} s^{-7/8} s^{-7/8} \Big]  ds  \lesssim  \eps^3  .
\end{split}
\end{equation*} 

\subsection{Control of $x^2 \widetilde{h}_3^{-++}$ in $L^2$}

We saw in Section~\ref{xh3l2} that $x   \widetilde{h}_3^{-++}  $ could be reduced to the terms~(\ref{sparrow2}) and (\ref{sparrow1}). Applying $\partial_\xi$ to these two terms gives expressions of the following types
\begin{subequations}
\begin{align}
 \label{seagull1} &  \int_2^t \int m^{-2,-4}(\xi,\eta,\sigma) e^{is\varphi_{-++}} \widehat{f}(\sigma) \widehat{f}(\eta-\sigma) \widehat{f}(\xi-\eta) \,d\eta\,d\sigma\,ds \\
\label{seagull2} & \int_2^t \int m^{-1,-3}(\xi,\eta,\sigma) e^{is\varphi_{-++}} \widehat{f}(\sigma) \widehat{f}(\eta-\sigma)  \partial_\xi \widehat{f}(\xi-\eta)\,d\eta\,d\sigma\,ds \\
\label{seagull3} &  \int_2^t s \int m^{0,-2}(\xi,\eta,\sigma) e^{is\varphi_{-++}} \widehat{f}(\sigma)  \widehat{f}(\eta-\sigma) \widehat{f}(\xi-\eta) \,d\eta\,d\sigma\,ds \\
\label{seagull4} &  \int_2^t s \int m^{1,-1}(\xi,\eta,\sigma) e^{is\varphi_{-++}} \widehat{f}(\sigma)  \widehat{f}(\eta-\sigma) \partial_\xi \widehat{f}(\xi-\eta) \,d\eta\,d\sigma\,ds \\
\label{seagull5} &  \int_2^t \int m^{0,-2}(\xi,\eta,\sigma) e^{is\varphi_{-++}} \widehat{f}(\sigma)  \widehat{f}(\eta-\sigma) \partial_\xi^2 \widehat{f}(\xi-\eta) \,d\eta\,d\sigma\,ds .
\end{align}
\end{subequations}
All these expressions can be estimated directly with the help of Corollary~\ref{coro2}.

\appendix
\renewcommand{\theequation}{\Alph{section}.\arabic{equation}}
\setcounter {equation} {0}
\section{Proof of Theorem~\ref{FS}}

\label{appendix}

We shall only prove $(i)$ in Theorem~\ref{FS}: if $m$ is of flag singularity type with degree 0, then the operator
$$
T_m : L^p \times L^q \times L^r \rightarrow  L^s
$$
is bounded for 
$$
\frac{1}{s} = \frac{1}{p} + \frac{1}{q} + \frac{1}{r}  \quad \mbox{if $1<p,q,r,s < \infty$}
$$
with a bound  less than a multiple of $\|m\|_{FS}$, and this result remains true if $s=2$ and only 
 one of $p,q,r$ is taken equal to $\infty$.

The proof of $(ii)$ follows the steps of $(i)$, but is much simpler, thus we shall skip it.

\begin{remark}
It would be of particular interest for the PDE problem which is the heart of the present paper to obtain estimates of the type $L^\infty \times L^\infty \times L^2 \rightarrow L^2$. This set of Lebesgue indices is not covered by Theorem~\ref{FS}, unless a projection $P_0$ on a band of frequencies is added. To see that boundedness for this choice of spaces does not hold in general, take $B$ a bilinear Coifman-Meyer operator, and form the flag singularity pseudo-product operator
$$
T \;\overset{def}{:} \; (f_1,f_2,f_3) \;\mapsto \; B(f_1,f_2)f_3.
$$
The operator $T$ is bounded from $L^\infty \times L^\infty \times L^2$ to $L^2$ if and only if $B$ is bounded from $L^\infty \times L^\infty$ to $L^\infty$; but this last property is not true for general Coifman-Meyer operators.
\end{remark}

We now start with the proof of Theorem~\ref{FS}:

\noindent 
\underline{Step 1: partition of the $(\xi,\eta,\sigma)$ plane}
By definition of a flag singularity with degree 0, the symbol $m$ can be written
$$
m^{III}(\xi,\eta,\sigma) m^{II}_1(\eta,\xi) m^{II}_2(\eta,\sigma) ,
$$
with $m^{III}$, $m^{II}_1$ and $m^{II}_2$ of Coifman-Meyer type. First observe that there are certain regions of the $(\xi,\eta,\sigma)$ plane where $m$ satisfies the Coifman-Meyer estimates~(\ref{butterfly}); hence the Coifman-Meyer theorem applies, and the desired estimate is proved. Thus, using a (homogeneous of degree 0, smooth away from 0) cut-off function, we can reduce the problem to the regions where the Coifman-Meyer estimate~(\ref{butterfly}) does not hold for $m$, namely 
$$
A_1 \cup A_2 \overset{def}{=} \{ |\xi| + |\eta| \leq \epsilon |\sigma| \} \cup \{ |\eta| + |\sigma| \leq \epsilon |\xi| \},
$$
where $\epsilon$ is a small constant. We further observe that on $A_1$ (respectively: on $A_2$), $m^{II}_2$ (respectively $m^{II}_1$) satisfies the Coifman-Meyer estimate in $(\xi,\eta,\sigma)$. Now choose cut-off functions $\chi_{A_1}(\xi,\eta,\sigma)$ and $\chi_{A_2}(\xi,\eta,\sigma)$ which have homogeneous bounds of degree 0, and localize respectively near $A_1$ and $A_2$. More precisely, we choose $\chi_{A_1}$ such that
\begin{equation*}
T_{\chi_{A_1}(\xi,\eta,\sigma)} (f_1,f_2,f_3) = \sum_{k} P_{<k-100} \left( P_k f_1 P_k f_2 \right) P_{<k-100} f_3,
\end{equation*}
and similarly for $\chi_{A_2}$. With the help of these cutoff functions, we can reduce matters to symbols of the two following types:
\begin{equation}
\begin{split}
\label{lion}
& \chi_{A_1}(\xi,\eta,\sigma) m^{III}(\xi,\eta,\sigma) m^{II}_1(\eta,\xi) \\
& \chi_{A_2}(\xi,\eta,\sigma) m^{III}(\xi,\eta,\sigma) m^{II}_2(\eta,\sigma) \\
\end{split}
\end{equation}
Observe that, $\langle\cdot , \cdot \rangle$ denoting the standard scalar product,
$$
\langle T_{\mu(\xi,\eta,\sigma) \nu(\eta,\sigma)} (f_1,f_2,f_3) \,,\,f_4 \rangle = \langle T_{\mu(\sigma,\eta,\xi) \nu(\eta,\xi)} (f_4,\bar{f_3}(-\cdot),\bar{f_2}(-\cdot))\,,\,f_1 \rangle .
$$
thus estimates for one of the above symbols can be deduced from the other by duality if all the Lebesgue indices are finite.

We focus from now on symbols of the first type in~(\ref{lion}). 

\bigskip

\noindent
\underline{Step 2: series expansion of $m^{III}$} 
Let us expand in series the symbol $m^{III}(\xi,\eta,\sigma)$ around $(\eta,\xi)=0$. One gets
$$
m^{III}(\xi,\eta,\sigma) = \sum_{|\alpha|=0}^{M-1} \Phi_\alpha(\sigma) (\eta,\xi)^\alpha + R(\xi,\eta,\sigma),
$$ 
where $\Phi_\alpha(\sigma) = \frac{\partial^\alpha}{\partial(\xi,\eta)^\alpha} m^{III}(0,0,\sigma)$ and the remainder $R$ satisfies 
\begin{equation}
\label{boundR}
\left| \partial_{\eta,\xi}^\beta \partial^\gamma_\sigma R(\xi,\eta,\sigma) \right| = O \left( \frac{\left( |\xi| + |\eta| \right)^{M-|\beta|}}{|\sigma|^{M+|\alpha|}} \right).
\end{equation}
Coming back to the original symbol~(\ref{lion}), we see that
\begin{equation*}
\begin{split}
& \chi_{A_1}(\xi,\eta,\sigma) m^{III}(\xi,\eta,\sigma) m^{II}_1 (\eta,\xi) \\
& \;\;\;\;\;\;\;\;\;\;\;\;\;\;\;\;\;\;\;\;= \chi_{A_1}(\xi,\eta,\sigma) \sum_{|\alpha|=0}^{M-1} \Phi_\alpha(\sigma) m^{II}_1 (\eta,\xi) (\eta,\xi)^\alpha + \chi_{A_1}(\xi,\eta,\sigma) R(\xi,\eta,\sigma) m^{II}_1 (\eta,\xi).
\end{split}
\end{equation*}
Choosing $M$ big enough, the bounds~(\ref{boundR}) satisfied by $R$ let the symbol $\chi_{A_1}(\xi,\eta,\sigma) R(\xi,\eta,\sigma) m^{II}_1 (\eta,\xi)$ satisfy Coifman-Meyer estimates, hence the associated operator enjoys the desired bounds. Thus it suffices to treat the summands of the first term of the above right-hand side; to simplify notations a little in the following, we replace $(\xi,\eta)^\alpha$ by $(\xi-\eta)^\alpha$, 
and consider therefore symbols of the type
\begin{equation}
\label{marmotte}
\chi_{A_1}(\xi,\eta,\sigma) \Phi_\alpha(\sigma) (\xi,\eta)^\alpha  m(\eta,\xi) ,
\end{equation}
where $\Phi_\alpha $ is homogeneous of degree $-|\alpha|$, and $m$ has homogeneous bounds of degree 0. 

\bigskip

\noindent
\underline{Step 3: paraproduct decomposition of $m$}
Recall that $m$ is a symbol with homogeneous bounds of degree 0, defined at the end of the last paragraph. Its paraproduct decomposition reads
$$
B_{m}(f,g) = \sum_j B_{m}(P_j f, P_{<j-1} g) + \sum_j B_{m}(P_{< j-1} f, P_j g) + \sum_{|j-k|\leq 1} B_{m} (P_j f, P_k g) .
$$
Proceeding as in the original work of Coifman-Meyer~\cite{CoifmanMeyer}, consider the symbol of one of the elementary bilinear operators above, for instance $B_{m}(P_j \cdot, P_{<j-1} \cdot)$. Denote by  $m_j(\xi,\eta)$  this symbol, which is compactly supported (in $(\xi,\eta)$), and expand it in Fourier series 
$$
m_j(\xi,\eta) = \chi(\xi,\eta) \sum_{p,q \in \mathbb{Z}^2} a_{p,q}^j e^{i c2^{-j} (p,q)\cdot (\xi,\eta)},
$$
where we denoted $c$ for a constant, $\chi$ for a cut-off function, and $a_{p,q}^j$ for the Fourier coefficients. It is now possible to forget about the summation over $p,q$. The idea is that the fast decay of the $a^j_{p,q}$ (in $p,q$), due to the smoothness of the symbol, offsets the polynomial factors arising from the complex exponentials $e^{i c2^{-j} (p,q)}$, which correspond to translations in physical space. Les us be a bit more explicit about this: we will in the following be using maximal and square function estimates. As far as maximal functions are concerned, there holds $| \left[S_{<j} f \right](x+2^{-j} q )| \lesssim |q|^2 |M f(x)|$, which is the announced polynomial growth. As far as square functions are concerned, we rely on the boundedness of operators of the type $f \mapsto \sum_j \alpha_j [P_j f](x+q2^{-j})$ on $L^p$ spaces with $p<\infty$, if $\alpha_j \in \ell^\infty$. The bounds of these operators grow polynomially in $q$. 


These considerations reduce matters to the case where $m$ is given by  one of the three paraproduct operators
$$
(f,g) \mapsto \quad \sum_j P_j f P_{<j-1} g \quad ; \quad \sum_j P_{< j-1} f P_j g \quad ; \quad \sum_{j}  P_j f P_j g
$$
(we suppressed the index $k$ in the last summation to make notations lighter). 

\bigskip

\noindent
\underline{Step 4: derivation of the model operators}
Combining this last line with~(\ref{marmotte}), we see that the operators of interest for us become
\begin{equation}
\begin{split}
& \sum_{j,k} P_{<k-100} P_j \left(\Phi_i(D) P_k f_1 P_k f_2 \right) \nabla^i P_{<j-1} P_{<k-100} f_3 \\
& \sum_{j,k} P_{<k-100} P_{<j-1} \left(\Phi_i(D) P_k f_1 P_k f_2 \right) \nabla^i P_j P_{<k-100} f_3 \\
& \sum_{j,k} P_{<k-100} P_j \left(\Phi_i(D) P_k f_1 P_k f_2 \right) \nabla^i P_j P_{<k-100} f_3.
\end{split}
\end{equation}
where here $i = |\alpha| $ and $ \nabla^i $ stands for $ \nabla^\alpha $. 
We now make some observations which allow us to simplify the above operators:
\begin{itemize}
\item First remark that $\Phi_i(D) P_k$, $\nabla^i P_j$ and $\nabla^i P_{<j}$ can be written respectively $2^{-ik} \widetilde{P}_k$, $2^{ij} \widetilde{\widetilde{P}}_j$ and $2^{ij} \widetilde{P}_{<j}$ with obvious notations. Since the operators with tildes have very close properties to the operators without tildes, we will in the following forget about the tildes.
\item Next notice that due to the Fourier space support properties of the different terms above, it is possible to restrict the summation to $j \leq k-97$.
\item Finally, since $P_{<k-100} P_j = P_j$ and $P_{<j-1} P_{<k-100} = P_{<j-1}$ for $j \leq k-103$, it is harmless to forget about the $P_{<k-100}$ operators in the above sums.
\end{itemize}
All these remarks lead to the following simplified versions of the above operators:
\begin{subequations}
\begin{align}
\label{kingfisher1}
& \sum_{k \geq j+97} 2^{i(j-k)} P_j \left(P_k f_1 P_k f_2 \right) P_{<j-1} f_3 \\
\label{kingfisher2}
& \sum_{k \geq j+97} 2^{i(j-k)} P_{<j-1} \left(P_k f_1 P_k f_2 \right) P_j f_3 \\
\label{kingfisher3}
& \sum_{k \geq j+97} 2^{i(j-k)} P_j \left(P_k f_1 P_k f_2 \right) P_j f_3
\end{align}
\end{subequations}

\bigskip

\noindent
\underline{Step 5: the case $i=0$}
If $i=0$, observe that, due to the Fourier support properties of the Littlewood-Paley operators, the operators in~(\ref{kingfisher1}) and~(\ref{kingfisher3}) are equal respectively to
\begin{equation}
\label{whale}
\begin{split}
& \sum_j P_j \left(\sum_k P_k f_1 P_k f_2 \right) P_{<j-1} f_3 \\
& \sum_j P_j \left(\sum_k P_k f_1 P_k f_2 \right) P_j f_3
\end{split}
\end{equation}
up to a difference term which is Coifman-Meyer. But the operators in~(\ref{whale}) are simply compositions of bilinear Coifman-Meyer operators. Thus the desired bounds follow for them. 

The operator in~(\ref{kingfisher2}) can be estimated with the help of the Littlewood-Paley square and maximal function estimates (Theorem~\ref{LP}):
\begin{equation*}
\begin{split}
\left\| (\ref{kingfisher2}) \right\|_s & \lesssim \left\| \left( \sum_j \left[ P_{<j-1} \left( \sum_{k \geq j+97} P_k f_1 P_k f_2 \right) P_j f_3 \right]^2 \right)^{1/2} \right\|_s \\
& \lesssim \left\|  \left( \sum_j \left[ P_j f_3 \right]^2 \right)^{1/2} \sup_j \left| P_{<j-1} \left( \sum_{k \geq j+97} P_k f_1 P_k f_2 \right) \right| \right\|_s \\
& \lesssim \left\| S f_3 \right\|_r \left\| M\left( \sup_j \left| \sum_{k \geq j+97} P_k f_1 P_k f_2 \right| \right) \right\|_{\frac{sr}{s-r}} \\
& \lesssim \left\| f_3 \right\|_r \left\| M \left( Sf_1 Sf_2 \right) \right\|_{\frac{sr}{s-r}} \lesssim \left\| f_3 \right\|_r \left\| Sf_1 Sf_2 \right\|_{\frac{sr}{s-r}} \lesssim \left\| f_3 \right\|_r \left\| Sf_1 \right\|_p \left\| Sf_2 \right\|_q. \\
\end{split}
\end{equation*}

\bigskip

\noindent
\underline{Step 6: the case $i>0$}
If $i>0$, we see that it suffices to prove uniform estimates in $J \geq 0$ for the operators
\begin{subequations}
\begin{align}
\label{beaver1}
& \sum_{j} P_j \left(P_{j+J} f_1 P_{j+J} f_2 \right) P_{<j-1} f_3 \\
\label{beaver2} & \sum_{j} P_{<j-1} \left(P_{j+J} f_1 P_{j+J} f_2 \right) P_j f_3 \\
\label{beaver3} & \sum_{j} P_j \left(P_{j+J} f_1 P_{j+J} f_2 \right) P_j f_3
\end{align}
\end{subequations}
since the desired result follows then upon summation over $J$. We start with the case where all the Lebesgue indices are finite. The estimate relies on the Littlewood-Paley square and maximal function estimates (Theorem~\ref{LP}), and on the vector valued maximal function estimate (see Stein~\cite{Stein}, chapter II)
$$
\left\| \left( \sum_j \left[ M f_j \right]^2 \right)^{1/2} \right\|_p \lesssim \left\|\left( \sum_j f_j^2 \right)^{1/2} \right\|_p
$$ 
This gives for~(\ref{beaver1})
\begin{equation*}
\begin{split}
\left\| (\ref{beaver1}) \right\|_s & \lesssim \left\| \left( \sum_j \left[ P_j \left(P_{j+J} f_1 P_{j+J} f_2 \right) P_{<j-1} f_3 \right]^2 \right)^{1/2} \right\|_s \lesssim \left\| Mf_3 \left( \sum_j \left[ P_j \left(P_{j+J} f_1 P_{j+J} f_2 \right)\right]^2 \right)^{1/2} \right\|_s \\
& \lesssim \left\| Mf_3 \right\|_r \left\| \left(\sum_j \left[ P_j \left(P_{j+J} f_1 P_{j+J} f_2 \right)\right]^2 \right)^{1/2} \right\|_{\frac{sr}{s-r}} \lesssim \left\| f_3 \right\|_r \left\| \left(\sum_j \left[ M \left(P_{j+J} f_1 P_{j+J} f_2 \right) \right]^2 \right)^{1/2} \right\|_{\frac{sr}{s-r}} \\
& \lesssim \left\| f_3 \right\|_r \left\| \left( \sum_j \left[ P_{j+J} f_1 P_{j+J} f_2 \right]^2 \right)^{1/2} \right\|_{\frac{sr}{s-r}} \lesssim \left\| f_3 \right\|_r \left\| M f_1 \left( \sum_j \left[ P_{j+J} f_2 \right]^2 \right)^{1/2} \right\|_{\frac{sr}{s-r}}\\
&  \lesssim \left\| f_3 \right\|_r \left\| M f_1 \right\|_p \left\| S f_2 \right\|_q \lesssim  \left\| f_3 \right\|_r \left\| f_1 \right\|_p \left\| f_2 \right\|_q.
\end{split}
\end{equation*}
(\ref{beaver3}) is estimated similarly
\begin{equation*}
\begin{split}
\left\| (\ref{beaver3}) \right\|_s & \lesssim \left\| \left( \sum_j \left[ P_{j} \left(P_{j+J} f_1 P_{j+J} f_2 \right)\right]^2 \right)^{1/2} \left( \sum_j \left[ P_j f_3 \right]^2 \right)^{1/2} \right\|_s \\
& \lesssim \left\| \left( \sum_j \left[ P_{j} \left(P_{j+J} f_1 P_{j+J} f_2 \right)\right]^2 \right)^{1/2} \right\|_{\frac{sr}{s-r}} \left\| \sum_j \left[ P_j f_3 \right]^2 \right\|_r^2 \\
& \lesssim \left\| f_1 \right\|_{p} \left\| f_2 \right\|_{q} \left\| f_3 \right\|_2.
\end{split}
\end{equation*}
And finally~(\ref{beaver2}):
\begin{equation*}
\begin{split}
\left\| (\ref{beaver2}) \right\|_s & \lesssim \left\| \left( \sum_j \left[ P_{<j-1} \left(P_{j+J} f_1 P_{j+J} f_2 \right) P_j f_3 \right]^2 \right)^{1/2} \right\|_s \\
& \lesssim \left\| \sup_j \left[ P_{<j-1} \left(P_{j+J} f_1 P_{j+J} f_2 \right) \right] \right\|_{\frac{sr}{s-r}} \left\| \left( \sum_j \left[ P_j f_3 \right]^2 \right)^{1/2} \right\|_r \\
& \lesssim \left\| M \left( Mf_1 Mf_2 \right) \right\|_{\frac{sr}{s-r}} \left\| S f_3 \right\|_r \lesssim \left\|  Mf_1 Mf_2 \right\|_{\frac{sr}{s-r}} \left\| f_3 \right\|_r \\
& \lesssim \left\|  Mf_1 \right\|_{p} \left\| Mf_2 \right\|_{q} \left\| f_3 \right\|_r \lesssim \left\|  f_1 \right\|_{p} \left\| f_2 \right\|_{q} \left\| f_3 \right\|_r.
\end{split}
\end{equation*}

\bigskip

\end{document}